\theoremstyle{plain}
\newtheorem{theorem}[subsection]{Theorem}
\newtheorem{nonameTHM}[subsection]{}
\newtheorem{proposition}[subsection]{Proposition}
\newtheorem{lemma}[subsection]{Lemma}
\newtheorem{corollary}[subsection]{Corollary}
\theoremstyle{definition}
\newtheorem{definition}[subsection]{Definition}
\newtheorem{definitions}[subsection]{Definitions}
\newtheorem{nothing*}[subsection]{}
\newtheorem{example}[subsection]{Example}
\newtheorem{notation}[subsection]{Notation}
\newtheorem{remark}[subsection]{Remark}
\newenvironment{enumerata}%
{\begin{enumerate}
		
		}{\end{enumerate}}
\newcommand{\Spec}{		\operatorname{{\rm Spec}}}
\newcommand{\Proj}{		\operatorname{{\rm Proj}}}
\newcommand{\trdeg}{	\operatorname{{\rm trdeg}}}
\newcommand{\Frac}{		\operatorname{{\rm Frac}}}
\newcommand{\HFrac}{		\operatorname{{\rm HFrac}}}
\newcommand{\ML}{		\operatorname{{\rm ML}}}
\newcommand{\lcm}{		\operatorname{{\rm lcm}}}
\newcommand{\type}{		\operatorname{{\rm type}}}
\newcommand{\cotype}{		\operatorname{{\rm cotype}}}
\newcommand{\Gr}{		\operatorname{{\rm\bf Gr}}}
\newcommand{\lb}{\langle}
\newcommand{\rb}{\rangle}
\newcommand{\setspec}[2]{\big\{\,#1\, \mid \,#2\, \big\}}
\newcommand{\Integ}{\ensuremath{\mathbb{Z}}}
\newcommand{\Nat}{\ensuremath{\mathbb{N}}}
\newcommand{\Rat}{\ensuremath{\mathbb{Q}}}
\newcommand{\Comp}{\ensuremath{\mathbb{C}}}
\newcommand{\proj}{\ensuremath{\mathbb{P}}}
\newcommand{\bk}{{\ensuremath{\rm \bf k}}}
\newcommand{\lnd}{\operatorname{{\rm LND}}}
\newcommand{\hlnd}{\operatorname{{\rm HLND}}}
\newcommand{\isom}{\cong}
\renewcommand{\epsilon}{\varepsilon}
\renewcommand{\phi}{\varphi}
\renewcommand{\emptyset}{\varnothing}
\begin{document}
	\renewcommand{\baselinestretch}{1.07}

	
	\title{On the rigidity of certain Pham-Brieskorn rings}

	\author{Michael Chitayat and Daniel Daigle}

\address{Department of Mathematics and Statistics\\
	University of Ottawa\\
	Ottawa, Canada\ \ K1N 6N5}

\email{mchit007@uottawa.ca}
\email{ddaigle@uottawa.ca}

\thanks{Research of both authors supported by grant 04539/RGPIN/2015 from NSERC Canada.}

\keywords{Locally nilpotent derivations, rigid rings, Pham-Brieskorn varieties, affine varieties, ruled varieties.}

{\renewcommand{\thefootnote}{}
\footnotetext{2010 \textit{Mathematics Subject Classification.}  Primary: 13N15, 14R20, 14R05.}}

\begin{abstract}
Fix a field $\bk$ of characteristic zero.
If $a_1, \dots, a_n$ ($n\ge3$) are positive integers,
the integral domain $B_{a_1, \dots, a_n} = \bk[X_1, \dots, X_n] / \langle X_1^{a_1} + \cdots +  X_n^{a_n} \rangle$ is called a \textit{Pham-Brieskorn ring}.
It is conjectured that if $a_i\ge2$ for all $i$ and $a_i=2$ for at most one $i$, then $B_{a_1, \dots, a_n}$ is rigid.
(A ring $B$ is said to be \textit{rigid} if the only locally nilpotent derivation $D: B \to B$ is the zero derivation.)
We give partial results towards the conjecture.
\end{abstract}
	
	\maketitle
	
	\vfuzz=2pt

\section{Introduction}

If $B$ is a commutative ring of characteristic zero, a derivation $D: B \to B$ is \textit{locally nilpotent} if for each $x \in B$ there exists $n>0$
such that $D^n(x)=0$. If the only locally nilpotent derivation $D: B \to B$ is the zero derivation, one says that $B$ is \textit{rigid}.
One says that $B$ is \textit{stably rigid} if for any $N\ge0$ and any locally nilpotent derivation $D : B[X_1, \dots, X_N] \to B[X_1, \dots, X_N]$
(where $B[X_1, \dots, X_N]$ is the polynomial ring in $N$ variables over $B$), we have $D(B) = \{0\}$.
Note that stable rigidity implies rigidity.

Fix a field $\bk$ of characteristic zero.
If $a_1, \dots, a_n$ ($n\ge3$) are positive integers,
the integral domain $B_{a_1, \dots, a_n} = \bk[X_1, \dots, X_n] / \langle X_1^{a_1} + \cdots +  X_n^{a_n} \rangle$ is called a \textit{Pham-Brieskorn ring}.
These rings and the corresponding varieties have been studied extensively and from several angles;
paper \cite{freudenburg2013} refers to \cite{SeadeBook2006} for a survey.
It is interesting to ask which Pham-Brieskorn rings are rigid or stably rigid.
Consider the set
$$
T_n = \setspec{ (a_1,\dots,a_n) \in \Integ^n }{ \text{$a_i \ge 2$ for all $i$ and $a_i=2$ for at most one $i$} } . 
$$
It is known (and easy to see) that 
\textit{if $B_{a_1,\dots,a_n}$ is rigid then $(a_1,\dots,a_n) \in T_n$};
so one wants to know if the converse is true. The case $n=3$ is settled by:

\begin{theorem}  \label {jeuyuBdgl3r6hj24dUi476dyb3i8}
Let $a,b,c$ be positive integers.
\begin{enumerata}

\item If $(a,b,c) \in T_3$ then $B_{a,b,c}$ is rigid.

\item If $\frac1a + \frac1b + \frac1c \le 1$ then  $B_{a,b,c}$ is stably rigid.

\end{enumerata}
\end{theorem}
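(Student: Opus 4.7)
The plan is to exploit the natural $\Integ$-grading on $B = B_{a,b,c}$ in which $\deg X = bc$, $\deg Y = ca$, $\deg Z = ab$, so that the defining relation $X^a + Y^b + Z^c$ is homogeneous of degree $abc$. This makes $B$ a positively graded $\bk$-algebra with $B_0 = \bk$, corresponding to a good $\mathbb{G}_m$-action on $\Spec(B)$ with one-point quotient. The grading is the essential tool for analysing locally nilpotent derivations on $B$.

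For part (a), I argue by contradiction. Suppose $D \ne 0$ is an LND on $B$. Standard homogenization -- replacing $D$ by its top-degree component with respect to the grading -- yields a nonzero \emph{homogeneous} LND $D^{\sharp}$ on $B$; this is possible because the associated graded ring of $B$ for the degree filtration is $B$ itself. The kernel $A = \ker D^{\sharp}$ is a graded subalgebra of $B$ of transcendence degree one, hence of the form $\bk[f]$ for some homogeneous $f \in B$. Geometrically, $D^{\sharp}$ generates a $\mathbb{G}_a$-action commuting with the $\mathbb{G}_m$-action, whose quotient map $\Spec(B) \to \aff^1$ descends to the quotient by $\mathbb{G}_m$: one obtains a relationship between the weighted projective curve $C = \Proj(B)$ and a weighted $\proj^1$ that constrains $(a,b,c)$ via ramification data. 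A case analysis on the value of $\frac{1}{a}+\frac{1}{b}+\frac{1}{c}$ (hyperbolic, Euclidean, or Platonic) rules out every triple in $T_3$; the Platonic cases $(2,3,3),(2,3,4),(2,3,5)$ and Euclidean cases $(3,3,3),(2,4,4),(2,3,6)$ need the most detailed attention.

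For part (b), the stable rigidity claim requires extending this to polynomial overrings. Extend the grading to $B[Y_1, \dots, Y_N]$ by setting $\deg Y_i = 0$. Given an LND $D$ on $B[Y_1, \dots, Y_N]$ with $D(B) \ne 0$, its top-degree homogenization $D^{\sharp}$ is a homogeneous LND with $D^{\sharp}(B) \ne 0$. The hypothesis $\frac{1}{a}+\frac{1}{b}+\frac{1}{c}\le 1$ is precisely the condition that the projective Pham-Brieskorn curve $C \subset \proj(bc, ca, ab)$ has non-negative canonical degree, so $C$ has genus at least one. This positivity of $K_C$, combined with the graded structure inherited by the invariant subring of any $\mathbb{G}_a$-action, obstructs the existence of a homogeneous LND on $B[Y_1, \dots, Y_N]$ acting nontrivially on $B$: specialising at a generic point of $\Spec\bk[Y_1,\dots,Y_N]$ should yield a nonzero derivation on the function field of $C$ of locally nilpotent type, which is impossible since a positive-genus curve carries no regular vector field generating a $\mathbb{G}_a$-action.

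The main obstacle is the case analysis for part (a) in the Platonic sub-cases, where $C$ is a rational curve and the genus argument of part (b) does not apply; here one must exploit finer information about ramification and the placement of orbits of the $\mathbb{G}_m$-action. For part (b), the subtle point is the descent: making precise how a nontrivial LND on $B[Y_1, \dots, Y_N]$ with $D(B) \ne 0$ produces an algebraic structure on $C$ which genus $\ge 1$ prohibits, without losing information when homogenising in the extra variables.
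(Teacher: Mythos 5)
Note first that the paper does not prove this theorem at all: it is quoted from the literature (\cite[Theorem 7.1]{freudenburg2013}, and \cite[Lemma 4]{Kali-Zaid_2000} for $\bk=\Comp$), so your argument has to stand on its own, and as written it has genuine gaps. For part (a), the entire content of the theorem is pushed into a ``case analysis on ramification data'' that is never carried out, and you yourself concede that the Platonic triples $(2,3,3),(2,3,4),(2,3,5)$ remain unresolved. Those are precisely the hard core of the statement: there $\frac1a+\frac1b+\frac1c>1$, $\Proj B_{a,b,c}$ is a rational curve, and the surface is a quotient $\aff^2/\Gamma$ by a finite non-cyclic group, so no genus/ruledness obstruction of the kind you set up can see them; a genuinely different argument (e.g.\ the $|u|_A\ge 2$ criterion of Freudenburg--Moser-Jauslin, or the classification of $\mathbb{G}_a$-actions on normal affine surfaces) is required. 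Even in the hyperbolic range your sketch is not enough as stated, because the coarse curve $\Proj B_{a,b,c}$ can be rational (e.g.\ $(2,3,7)$), so the constraint must be extracted from the orbifold data (multiple fibres), and that bookkeeping is exactly what is missing. Also, steps such as ``$\ker D^{\sharp}=\bk[f]$'' are fine but asserted, not proved.

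Part (b) rests on a step that is false as stated: $\frac1a+\frac1b+\frac1c\le 1$ gives non-negativity of the \emph{orbifold} canonical degree, not genus $\ge 1$ of the coarse curve $C=\Proj B_{a,b,c}$; for $(2,3,7)$ and infinitely many other triples covered by (b) one has $C\cong\proj^1$, and then ``a positive-genus curve carries no $\mathbb{G}_a$-vector field'' yields nothing. Moreover ``specialising at a generic point of $\Spec\bk[Y_1,\dots,Y_N]$ yields a derivation of locally nilpotent type on the function field of $C$'' is not a meaningful operation: local nilpotency does not pass to fibres or to function fields (which always carry many derivations); what one can legitimately extract is ruledness of $\HFrac(B)$ in the spirit of Theorem~\ref{mainResultGeneralized}, which again only contradicts genus $\ge 1$. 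You also need, and do not supply, an argument that after grading $B[Y_1,\dots,Y_N]$ with $\deg Y_i=0$ the homogenization can be arranged to satisfy $\tilde D(B)\neq 0$. For comparison, the proofs this paper relies on (and the paper's own Section 6 treatment of the analogous statement for $n\ge 4$, Corollary~\ref{ck9fweymrdkayecumcff83}) proceed via a Mason/ABC-type degree estimate (Theorem~\ref{pc09vi2b39efqkw}) combined with factorially closed kernels, and handle the Platonic triples of (a) by separate arguments; your plan would need all of that content filled in before it becomes a proof.
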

This is \cite[Theorem 7.1]{freudenburg2013} (the case $\bk=\Comp$, with part (b) implicit, is \cite[Lemma 4]{Kali-Zaid_2000}).

For arbitrary $n\ge3$, one has:

\begin{theorem}\label {lowSum}
If $\sum\limits_{i=1}^n \frac{1}{a_i} \leq \frac{1}{n-2}$ then $B_{a_1,\dots,a_n}$ is rigid.
\end{theorem}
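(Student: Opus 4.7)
My plan is to equip $B$ with the weighted grading $\deg X_i = w_i := d/a_i$ where $d = \lcm(a_1,\ldots,a_n)$; this makes $B = \bigoplus_{i\ge 0} B_i$ a connected, positively graded $\bk$-algebra in which $f = \sum_i X_i^{a_i}$ is homogeneous of degree $d$. Assuming for contradiction that $D\colon B\to B$ is a nonzero LND, the standard device of replacing $D$ by its leading homogeneous component (still a nonzero LND) allows us to take $D$ homogeneous of some integer degree $e$. Choose weighted-homogeneous lifts $F_i \in A := \bk[X_1,\ldots,X_n]$ of $D(X_i)$, each of degree $w_i + e$ (with $F_i = 0$ whenever $D(X_i) = 0$). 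The identity $D(f)=0$ in $B$ then lifts to
\[
\sum_{i=1}^n a_i X_i^{a_i - 1} F_i \;=\; f\, G \quad\text{in } A,
\]
for some $G \in A$ homogeneous of degree $e$ (so $G=0$ if $e<0$). Rewriting as $\sum_i X_i^{a_i-1}(a_i F_i - G X_i) = 0$ and invoking that $X_1^{a_1-1}, \ldots, X_n^{a_n-1}$ is a regular sequence in $A$, the Koszul syzygies give
\[
a_i F_i - G X_i \;=\; \sum_{j \neq i} h_{ij}\, X_j^{a_j - 1}, \qquad h_{ij} = -h_{ji},
\]
with each $h_{ij}$ homogeneous of degree $w_i + w_j + e - d$.

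If $e < 0$, degree reasons force $G = 0$, so $a_i F_i = \sum_{j\neq i} h_{ij} X_j^{a_j-1}$, and nonvanishing of $D$ requires some $h_{ij}\ne 0$, giving $w_i+w_j+e \ge d$ and in particular $w_i+w_j > d$. But the hypothesis implies $(a_1,\ldots,a_n)\in T_n$ (each $a_i\ge 2$, at most one equal to $2$), and a short case check then gives $w_i+w_j \le d/2+d/3 < d$ for every pair $i\ne j$, a contradiction.

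The real work is the case $e \ge 0$. When all $h_{ij}$ vanish, $F_i = GX_i/a_i$ for every $i$, so $D = G\cdot E$ where $E := \sum_i (X_i/a_i)\partial_{X_i}$ descends to $B$ and acts on $B_k$ as multiplication by $k/d$; a direct induction gives $D^k(X_1) = c_k\, G^k X_1$ for explicit constants $c_k\in\bk$ that remain nonzero for every $k$ as long as $e\ge 0$, forcing $G = 0$ and hence $D = 0$. The remaining subcase, $e\ge 0$ with some $h_{ij}\ne 0$, is the main obstacle: the existence of a nonzero $h_{ij}$ merely provides the lower bound $e\ge d-w_i-w_j$, and one must combine this with the full strength of the hypothesis, in the form $(n-2)\sum_k w_k \le d$, to reach a contradiction. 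I expect this step to require a finer analysis of the decomposition $\vec F = G\vec E + \vec H$ (where $\vec H \cdot \nabla f = 0$ holds in $A$, so $\vec H$ induces a degree-$e$ derivation $\delta$ of $B$ and $D = GE + \delta$), exploiting local nilpotency to bound $e$ from above and showing that the numerical hypothesis is precisely what is needed to rule out any such derivation.
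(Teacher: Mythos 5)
There is a genuine gap, and it sits exactly where you place it yourself: the case $e\ge 0$ with $\vec H\neq 0$ is the entire content of the theorem, and your proposal only expresses the expectation that ``a finer analysis'' will handle it. Nothing in the syzygy bookkeeping you have set up uses local nilpotency, and degree constraints alone cannot possibly finish: the Jacobian-type derivations $\Delta_{ij}=\frac{\partial f}{\partial X_j}\partial_{X_i}-\frac{\partial f}{\partial X_i}\partial_{X_j}$ annihilate $f$, descend to $B$, are homogeneous of nonnegative degree $d-w_i-w_j$ (nonnegative precisely because the hypothesis forces $w_i+w_j<d$), and have nonzero syzygy data $h_{ij}$ --- so they satisfy every constraint you have derived, yet $B$ admits such nonzero derivations in abundance. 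The missing ingredient is a mechanism that converts local nilpotency plus the inequality $\sum_i \frac1{a_i}\le\frac1{n-2}$ into a contradiction; in all known arguments this is an ABC/Mason--Stothers-type theorem. Concretely, if $D\neq0$ is an LND with kernel $A$ and $K=\Frac(A)$, then $B$ embeds (after localizing at $A\setminus\{0\}$) into $K^{[1]}$, the images of $x_1,\dots,x_n$ are pairwise relatively prime there, and a generalized Mason theorem applied to $x_1^{a_1}+\cdots+x_n^{a_n}=0$ forces all $x_i\in K$, hence $x_i\in A$ and $D=0$. That numerical input never enters your argument, so the proof is not complete.

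For comparison: the paper does not prove Theorem~\ref{lowSum} directly at all --- it cites \cite[Example~2.6]{almostRigidRings} for $\bk=\Comp$ and descends rigidity to arbitrary fields of characteristic zero --- and its own strengthening (Corollary~\ref{ck9fweymrdkayecumcff83}, via Corollary~\ref{0f9b23td8awpe7g} and Theorem~\ref{cp09vvh2930cvqn3909}) runs exactly along the kernel-localization-plus-Mason line sketched above, not along a homogeneous-lift/Koszul analysis. Your reduction to a homogeneous $D$ and your treatment of the peripheral cases ($e<0$, and $\vec H=0$) are fine, but they do not approach the core of the statement.
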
 

The case $\bk=\Comp$ of Theorem~\ref{lowSum} is \cite[Example~2.6]{almostRigidRings}.
The general case follows, because (as one can see) rigidity over $\Comp$ implies rigidity over any field of characteristic zero.

In the case $n=4$, the following is known:

	\begin{theorem}  \label {collection}
Assume that $(a,b,c,d) \in T_4$.  Then $B_{a,b,c,d}$ is rigid in each of the following cases:
		
		\begin{enumerata}
			
		\item  $\frac{1}{a} + \frac{1}{b} + \frac{1}{c} + \frac{1}{d} \leq \frac{1}{2}$
		
		\item $\gcd(abc,d) = 1$
		
		\item $a=b=c=3$ 
		
		\item $a=2$, $b,c,d \geq 3$, $b$ is even, $\gcd(b,c) \geq 3$ and $\gcd(d,\lcm(b,c)) = 2$
		
		\item $\cotype(a,b,c,d) \geq 2$.
		
		\end{enumerata}
	\end{theorem}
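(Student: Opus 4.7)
The plan is to handle the five cases separately, since they rest on different combinatorial or arithmetic hypotheses, while sharing a common framework. Case (a) is essentially free: with $n=4$ the bound $\frac1a+\frac1b+\frac1c+\frac1d\le\frac12$ is exactly the hypothesis $\sum \frac{1}{a_i}\le\frac{1}{n-2}$ of Theorem \ref{lowSum}, so rigidity of $B_{a,b,c,d}$ follows directly.

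For cases (b)--(e) the unifying strategy is to exploit the natural $\mathbb{Z}$-grading on $B = B_{a,b,c,d}$: setting $L = \operatorname{lcm}(a,b,c,d)$ and $\deg(x_i)=L/a_i$, the defining relation $x_1^a+x_2^b+x_3^c+x_4^d$ becomes homogeneous of degree $L$. I would assume for contradiction that $B$ admits a nonzero locally nilpotent derivation $D$, and then replace $D$ by the ``top'' (or ``bottom'') homogeneous component of its decomposition with respect to this grading. Because $\ker D$ is factorially closed, one obtains a nonzero homogeneous LND whose kernel is a graded subring of $B$, and a homogeneous element $f \in \ker D$ of positive degree exists (e.g.\ some suitable power of one of the $x_i$'s obtained from $D$-iterates). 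Writing out how $D$ must act on each generator $x_i$ -- both the degree constraints imposed by homogeneity and the algebraic constraints imposed by $D$ applied to the Pham--Brieskorn relation -- produces the arithmetic inequalities to be contradicted.

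Then I would feed the hypothesis of each individual case into that framework. In (b), the coprimality $\gcd(abc,d)=1$ forces $d\mid\deg(x_4^d\text{-type quantities})$ in a way that clashes with the possible degrees of $D(x_4)$. In (c), with $a=b=c=3$, the $S_3$-symmetry among $x_1,x_2,x_3$ and the explicit Fermat-cubic relation allow one to pin down $D$ on a symmetric piece and get a contradiction with the presence of $x_4^d$. In (d), the parity/gcd hypotheses ($b$ even, $\gcd(b,c)\ge3$, $\gcd(d,\operatorname{lcm}(b,c))=2$, etc.) constrain which of the $x_i$ can be $D$-constant and which can be $D$-variable, again forcing contradiction. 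In (e), one extracts, from $\cotype(a,b,c,d)\ge2$, enough independent divisibility obstructions on $\deg(D)$ to rule out any homogeneous nonzero $D$.

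The main obstacle I expect is case (e), because ``cotype $\ge 2$'' is the most general-looking of the five hypotheses and seems designed to package a body of arithmetic into a single condition; extracting the right divisibility information from it and turning it into a contradiction with the degree formulas for $D(x_i)$ is where the real work lies. Cases (c) and (d) are secondary obstacles in that they require ad hoc analyses of the specific exponents, but once the grading-and-homogenization machinery is set up and the action of $D$ on the relation is written out explicitly, these reduce to finite arithmetic checks.
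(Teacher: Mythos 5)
Your case (a) is exactly right and matches the paper: it is the case $n=4$ of Theorem~\ref{lowSum}. But for (b)--(e) your proposal is a plan rather than a proof, and the plan has a genuine gap. Note first that the paper does not prove these parts at all: it quotes them, attributing (b)--(d) to \cite[Theorem~8.1]{freudenburg2013} and \cite[Corollary~1.9]{affineCones}, and (e) to \cite[Theorem~7.2(b)]{LNDsAbelianGroup}. Your strategy of homogenizing a hypothetical nonzero locally nilpotent derivation with respect to the standard grading and then extracting arithmetic contradictions from degree constraints is the standard first step, but it does not by itself yield (b)--(d), and for (c) it cannot: the homogeneous reduction for $B_{3,3,3,d}$ leads to the question of whether the (weighted) affine cone over a smooth cubic surface admits a nontrivial $\mathbb{G}_a$-action. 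The cubic surface is rational, hence ruled, so ruledness-type obstructions (the only ``geometric output'' your degree analysis could feed into) vanish, and no finite arithmetic check on $\deg D(x_i)$ settles it. The known proof of (c) is the Cheltsov--Park--Won theorem on affine cones over smooth cubic surfaces, which rests on a delicate analysis of anticanonical cylinders and log canonical thresholds --- this is precisely the ``real work'' that your sketch defers to ``ad hoc analyses'' and ``finite arithmetic checks,'' and it is not of that nature.

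Cases (b) and (d) likewise are not routine consequences of homogeneity: in \cite{freudenburg2013} they are obtained from the machinery of rings with roots adjoined (the invariant $|u|_A$ of Corollary~\ref{rigidDegree2} and nontrivial estimates for it), building on the $n=3$ result of Theorem~\ref{jeuyuBdgl3r6hj24dUi476dyb3i8}; your sketch does not indicate how the coprimality or parity hypotheses actually force the needed vanishing of $D$ on generators. Case (e) is the one part where your divisibility intuition is closest to the truth, but the key input is a theorem about homogeneous locally nilpotent derivations of rings graded by abelian groups (the source of the paper's Corollary~\ref{Cor63ofDFM}), which lets one kill two of the variables $x_j$ with $j\in J(S)$ and reduce to the rigid three-variable ring; this is how the paper later generalizes (e) in Proposition~\ref{kcj203jfs0dvj} and Corollary~\ref{0cvikn3i4Apfsyuksukje}. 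As written, your proposal neither supplies that input nor the geometric input for (c), so it does not constitute a proof of parts (b)--(e); the honest route, and the one the paper takes, is to cite the literature for them.
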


Part (e) of Theorem \ref{collection} is Theorem 7.2(b) of \cite{LNDsAbelianGroup}, reformulated in terms of cotype (see Definition~\ref{typeDef} for the notion of cotype).
Part (a) is the case $n=4$ of Theorem~\ref{lowSum}.
Parts (b--d) are stated in \cite[Theorem 7.1]{LNDsAbelianGroup} but are proved in
\cite[Theorem~8.1]{freudenburg2013} and \cite[Corollary~1.9]{affineCones}.

\medskip

%
%


This article settles many cases not covered by Theorems \ref{jeuyuBdgl3r6hj24dUi476dyb3i8}--\ref{collection}.
In order to avoid giving too many definitions in the Introduction, we only present a subset of our results as Theorem~\ref{cijvbb2i38dbfoqopdcwue}.
Other significant results appear in Section~\ref{SEC:RigidityofPhamBrieskornRings}.
The Pham-Brieskorn rings $B_{a_1,\dots,a_n}$ that appear in Theorem~\ref{cijvbb2i38dbfoqopdcwue}
are defined over an arbitrary field $\bk$ of characteristic zero, and we always assume that $n\ge3$.

\begin{theorem} \label {cijvbb2i38dbfoqopdcwue}
\mbox{\ }

\begin{enumerata}
\setlength{\itemsep}{1.5mm}

\item If $a \ge n \ge 4$ then $B_{ \mbox{\scriptsize$\underbrace{a, \dots, a}_n$} }$ is rigid.

\item If $\sum_{i=1}^n \frac1{a_i} \le \frac1{n-2}$ then $B_{a_1,\dots,a_n}$ is \textbf{stably} rigid.

\item If $\sum\limits_{i \in I} \frac1{a_i} < \frac1{n-2}$ then $B_{a_1,\dots,a_n}$ is rigid,
where {\small $I = \setspec{ i }{ \text{$a_i$ divides $\lcm(a_1,\dots \widehat{a_i} \dots, a_n)$} }$.}

\item If $a,b,c,d \ge 1$ satisfy $a \nmid \lcm(b,c,d)$ and $\frac1b + \frac1c + \frac1d < \frac12$ then $B_{a,b,c,d}$ is rigid. 

\item If $(a_1,\dots,a_n) \in T_n$ and $\cotype(a_1,\dots,a_n) \geq n-2$, then $B_{a_1,\dots,a_n}$ is rigid.  

\item If $k_1,k_2,k_3,k_4 \ge1$ are pairwise relatively prime and $a \ge 3$ then $B_{a k_1, a k_2, a k_3, a k_4}$ is rigid. 

\item If $k_1, \dots, k_n \ge1$ are pairwise relatively prime and $a \ge n \ge 4$ then $B_{a k_1, \dots, a k_n}$ is rigid. 

\item If $a_1,\dots,a_m\ge1$ $(m\ge1)$ satisfy $a_i \nmid \lcm(3,a_1, \dots \widehat{a_i} \dots, a_m)$ for all $i\in \{1, \dots, m\}$,
then $B_{a_1, \dots, a_m,3,3,3}$ is rigid.
	
\end{enumerata}
\end{theorem}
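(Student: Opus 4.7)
My plan is to reduce the rigidity of $B := B_{a_1, \dots, a_m, 3, 3, 3}$ to that of $B_{3,3,3,3}$, which is known to be rigid by Theorem~\ref{collection}(c). Suppose for contradiction that $D$ is a nonzero locally nilpotent derivation on $B$; after the standard homogenization procedure for graded affine $\bk$-domains, I may assume $D$ is homogeneous for the natural $\Integ$-grading ($\deg x_i = d/a_i$, $\deg y_j = d/3$, where $d = \lcm(a_1, \dots, a_m, 3)$).

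Next I observe that the divisibility hypothesis implies $(a_1, \dots, a_m, 3, 3, 3) \in T_{m+3}$: no $a_i$ can equal $1$ (else $a_i$ divides every integer), and at most one $a_i$ can equal $2$ (two $2$'s would make the hypothesis fail for each of them, since each would divide the lcm containing the other). Moreover, for every $i \in \{1, \dots, m\}$, the condition $a_i \nmid \lcm(3, a_1, \dots, \widehat{a_i}, \dots, a_m)$ furnishes a prime $p$ with $v_p(a_i)$ strictly greater than $v_p$ of every other exponent in the list $(a_1, \dots, a_m, 3, 3, 3)$. Invoking the kernel lemma for homogeneous LNDs of Pham-Brieskorn rings indexed by $T_n$---the same machinery underlying parts (c) and (e) of Theorem~\ref{cijvbb2i38dbfoqopdcwue}---I conclude $D(x_i) = 0$ for every $i \in \{1, \dots, m\}$. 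Hence $D$ is $R$-linear, where $R := \bk[x_1, \dots, x_m]$.

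Now I pass to the generic fiber. Let $K := \Frac(R)$ and let $f := \sum_{i=1}^m x_i^{a_i} \in R$, which is nonzero since the $x_i^{a_i}$ are linearly independent monomials. The derivation $D$ extends canonically to a $K$-linear locally nilpotent derivation $D_K$ on $B_K := B \otimes_R K = K[y_1, y_2, y_3]/(y_1^3 + y_2^3 + y_3^3 + f)$; local nilpotence survives because $B_K$ is generated over $K$ by $y_1, y_2, y_3$ on which $D_K$ acts nilpotently. Extending to $\bar K$ and rescaling $y_j \mapsto (-f)^{-1/3} y_j$ identifies $B_K \otimes_K \bar K$ with $A := \bar K[z_1, z_2, z_3]/(z_1^3 + z_2^3 + z_3^3 - 1)$ and carries $D_K \otimes 1$ to a nonzero $\bar K$-linear LND on $A$. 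It therefore suffices to show that $A$ is rigid.

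Finally, $A$ is isomorphic (via $z_i \mapsto -z_i$) to the degree-zero summand of the localization $B_{3,3,3,3}[y_0^{-1}]$, taken over $\bar K$ with the standard grading $\deg y_i = 1$ for $i = 0, 1, 2, 3$. Indeed, $B_{3,3,3,3}[y_0^{-1}] = A[y_0, y_0^{-1}]$ as graded $\bar K$-algebras. Given any nonzero LND $D_0$ on $A$, extend it to an LND $\tilde D$ on $B_{3,3,3,3}[y_0^{-1}]$ by setting $\tilde D(y_0) = 0$. Since $y_0 \in \ker \tilde D$, the operator $y_0^N \tilde D$ remains locally nilpotent for every $N$, and for $N$ sufficiently large it sends every generator of $B_{3,3,3,3}$ into $B_{3,3,3,3}$---hence restricts to a nonzero LND of $B_{3,3,3,3}$, contradicting its rigidity (Theorem~\ref{collection}(c), combined with the general transfer of rigidity from $\Comp$ to any field of characteristic zero). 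The main obstacle is the LND-kernel lemma invoked in the second paragraph; once that input is in hand, the remainder is a standard generic-fiber / projective-cone argument.
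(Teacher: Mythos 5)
First, a scope issue: the statement you were asked to prove is the whole of Theorem~\ref{cijvbb2i38dbfoqopdcwue}, with eight parts resting on quite different arguments in the paper (non-ruledness of Fermat varieties via Theorem~\ref{mainResultGeneralized} for (a), a Mason-type theorem for the stable rigidity in (b), the order $\le^i$ together with Corollary~\ref{rigidDegree2} for (c), (d), (f), (g), and the quotient induction of Proposition~\ref{kcj203jfs0dvj} for (e) and (h)). Your proposal addresses only part (h), so even if it were complete it would prove one item out of eight.

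Within part (h) there is a genuine gap, and it sits exactly where you flag ``the main obstacle.'' The kernel lemma you invoke --- that a nonzero homogeneous locally nilpotent $D$ on $B_S$, $S=(a_1,\dots,a_m,3,3,3)$, must satisfy $D(x_i)=0$ for \emph{every} $i\in\{1,\dots,m\}$ --- is not what the machinery behind parts (c) and (e) provides. The available result (Corollary~\ref{Cor63ofDFM}, i.e.\ Corollary~6.3 of \cite{LNDsAbelianGroup}) says only that $D^2(x_j)=0$ for $j\in J(S)$ and that for any two distinct $j_1,j_2\in J(S)$ at least one of $D(x_{j_1}),D(x_{j_2})$ vanishes; hence $D$ kills all but \emph{at most one} of $x_1,\dots,x_m$, not all of them. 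The case where one $x_{i_0}$ survives is the heart of the matter: there $R=\bk[x_1,\dots,x_m]$ is not contained in $\ker D$, so your generic-fiber passage to $K[y_1,y_2,y_3]/(y_1^3+y_2^3+y_3^3+f)$ is unavailable. Handling that case requires choosing $D$ irreducible (possible by \ref{p0cfi2k309cbqp90ws}\eqref{02dj7edj9w34diey}), passing to the quotients $B_S/\langle x_j\rangle\isom B_{S_j}$ by the killed variables so as to obtain a nonzero homogeneous locally nilpotent derivation of $B_{a_{i_0},3,3,3}$, and then quoting the rigidity of $B_{a_{i_0},3,3,3}$ from Theorem~\ref{collection}(c) --- which is precisely the paper's route (Proposition~\ref{kcj203jfs0dvj} applied as in Example~\ref{0v34i8IeEkrj6ssyfuwygrwejk37rhf}); note that reduction needs $B_{a,3,3,3}$ rigid for arbitrary $a\ge2$, not just $B_{3,3,3,3}$. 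Your reduction to $B_{3,3,3,3}$ over $\bar K$ (which, taken on its own, is essentially sound: the identification $B_{3,3,3,3}[y_0^{-1}]=A[y_0,y_0^{-1}]$, the extension-and-multiply-by-$y_0^N$ argument, and the transfer of rigidity from $\Comp$ to $\bar K$ all work) only disposes of the case in which every $x_i$ is killed, and that case can in any event be finished more cheaply by quotienting all the way down to $B_{3,3,3}$ and quoting Theorem~\ref{jeuyuBdgl3r6hj24dUi476dyb3i8}(a). So the missing ingredient is not a routine lemma to be cited; it is the pairwise-vanishing statement together with the quotient induction, which is the actual content of the paper's proof of (h).
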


Part~(a) (of Theorem~\ref{cijvbb2i38dbfoqopdcwue})
is Corollary~\ref{pcvn394rfwseEh39e}; the fact that  $B_{a,\dots,a}$ is rigid when $n \le a < n(n-2)$ appears to be a new result
(if $a\ge n(n-2)$ then $B_{a,\dots,a}$ is rigid by Theorem~\ref{lowSum}).
Parts (b) and (c) are Corollary~\ref{ck9fweymrdkayecumcff83} and Proposition~\ref{oneHalfRigid}, respectively;
these two results strengthen  Theorem~\ref{lowSum}.
Part (d) is the case ``$n=4$'' of part (c).
Part (e) is Corollary~\ref{0cvikn3i4Apfsyuksukje}; it generalizes Theorem~\ref{collection}(e).
Parts (f) and (g) are Corollary~\ref{cohj23r9fh03v8cjd3928}, and (h) is Example~\ref{0v34i8IeEkrj6ssyfuwygrwejk37rhf};
parts (f--h) are illustrations of stronger results that cannot be stated in the Introduction.

\bigskip

Let us say a few words about Section~\ref{SEC:DerivationsofGgradedrings}.
It is known that if a $\bk$-domain $B$ is not rigid then its field of fractions $\Frac(B)$ is ruled over $\bk$, i.e.,
there exists a field $K$ such that $\bk \subseteq K \subset \Frac(B)$ and $\Frac(B) = K(t)$ where $t$ is transcendental over $K$.
So, one technique for showing that $B$ is rigid is to show that $\Frac(B)$ is not ruled over $\bk$.
However, that technique is useless when $B$ admits a nontrivial $\Integ$-grading, because then $\Frac(B)$ is always ruled over $\bk$:
we have $\Frac(B) = K(t)$ where $K=\HFrac(B)$ is the ``homogeneous field of fractions'' of $B$.
Section~\ref{SEC:DerivationsofGgradedrings} shows that if $B$ is a graded $\bk$-domain which is not rigid then, under certain additional hypotheses, 
$\HFrac(B)$ itself is ruled over $\bk$.
So, to prove that a graded domain $B$ satisfying certain assumptions is rigid, it suffices to show that $\HFrac(B)$ is not ruled.
It is this technique that allows us to prove part (a) of Theorem~\ref{cijvbb2i38dbfoqopdcwue}.

\section{Preliminaries}
	
Throughout this work, all rings are commutative and have a multiplicative identity $1$.
All ring homomorphisms map 1 to 1. If $B$ is a ring then  $B^*$ denotes its group of units.
If $b \in B$ then $\lb b \rb$ is the ideal of $B$ generated by $b$,
and we use the notation $B_b = S^{-1}B$ where $S = \{1, b, b^2, \dots \}$.
By a \textit{domain}, we mean an integral domain.  If $B$ is a domain, its fraction field is denoted $\Frac(B)$.
If $A \subseteq B$ are domains then $\trdeg_A(B)$ denotes the transcendence degree of $\Frac(B)$ over $\Frac(A)$.
A subring $A$ of a domain $B$ is \textit{factorially closed in B} if for all $x,y \in B \setminus\{0\}$
we have the implication $xy \in A \Rightarrow x,y \in A$.
If $\bk$ is a field, then a \textit{$\bk$-domain} is a domain that is also a $\bk$-algebra.
An \textit{affine $\bk$-domain} is a $\bk$-domain that is finitely generated as a $\bk$-algebra.

If $A$ is a subring of a ring $B$, we write $B = A^{[n]}$ to indicate that $B$ is isomorphic to the polynomial algebra in $n$ variables over $A$.
If $K/\bk$ is a field extension, $K = \bk^{(n)}$ means that $K$ is purely transcendental over $\bk$, of transcendence degree $n$.

We write $``\subseteq"$ for inclusion and $``\subset"$ for proper inclusion.   
	
Let $B$ be a ring and $D : B \to B$ a derivation.
We say that $D$ is \textit{irreducible} if the only principal ideal of $B$ containing $D(B)$ is $B$ itself.      
We say that $D$ is \textit{locally nilpotent} if for each $b \in B$ there exists an $n \in \Nat$ such that $D^n(b) = 0$. 
A \textit{slice} of $D$ is an element $t \in B$ such that $D(t) = 1$.
A \textit{preslice} of $D$ is an element $t \in B$ such that $D(t) \neq 0$ and $D^2(t) = 0$.

\begin{definition}
If $B$ is a ring, the set of locally nilpotent derivations $D : B \to B$ is denoted $\lnd(B)$.
If $\lnd(B) = \{0\}$, we say that $B$ is \textit{rigid}.
\end{definition}

	\begin{nonameTHM} \label {p0cfi2k309cbqp90ws}
	{\rm Let $B$ be an integral domain of characteristic zero, let $D : B \to B$ be a derivation, and let $A = \ker D$.
The following facts are well known (refer to  \cite{VDE:book}, \cite{Freud:Book-new} or \cite{Dai:IntroLNDs2010}).}
		
		\begin{enumerata}

		\item \label {c0ovjn3vr7} If $D$ is locally nilpotent, then $A$ is a factorially closed subring of $B$.
Consequently, if $D$ is locally nilpotent and $\bk$ is a field included in $B$ then $D$ is a $\bk$-derivation.
		
		\item \label {teik5i68a9we} Assume that $\Rat \subseteq B$.
If $D \neq 0$ is locally nilpotent then $D$ has a preslice $t \in B$.
For any such $t$, if we define $\alpha = D(t)$ then $B_\alpha = A_\alpha[t] = (A_\alpha)^{[1]}$.
Consequently, $\trdeg_A(B) = 1$ and $\Frac(B)  = (\Frac(A))^{(1)}$.
		
		\item \label {02dj7edj9w34diey} If $D \neq 0$ is locally nilpotent and $B$ satisfies
the ascending chain condition for principal ideals, then there exists an irreducible
locally nilpotent derivation $\delta : B \to B$ such that $D = a \delta$ for some $a \in A$.  
		
		\item Let $S \subseteq B \setminus \{0\}$ be a multiplicative subset of $B$ containing $1$.
Then $S^{-1}D : S^{-1} B \to S^{-1} B$ defined by $(S^{-1}D)(\frac{b}{s}) = \frac{sD(b)-bD(s)}{s^2}$ is a derivation and the following hold:
		
		\begin{enumerata}
			\item $S^{-1}D$ is locally nilpotent if and only if $D$ is locally nilpotent and $S \subseteq A$;
		
			\item if $S \subseteq A$ then $\ker S^{-1}D = S^{-1}A$ and $S^{-1}A \cap B = A$.

		\end{enumerata}
	\end{enumerata}
	\end{nonameTHM}

\begin{definition} \label {iuh298grhgnfmdu7ueMc}
Let $B$ be a ring of characteristic zero.
If $D \in \lnd(B)$ then define $\deg_D : B \to \Nat \cup \{-\infty\}$ by declaring that $\deg_D(0)=-\infty$ and that
$\deg_D(x) = \max\setspec{ n \in \Nat }{ D^n(x) \neq 0 }$ for each $x \in B\setminus \{0\}$.
Also define $|x|_B = \min\setspec{ \deg_D(x) }{ D \in \lnd(B) \setminus \{0\} }$ for each $x \in B \setminus \{0\}$,
where we adopt the convention that $\min\emptyset = \infty$, so $|x|_B=\infty$ when $B$ is rigid.
\end{definition}
	
	\section{Derivations of G-graded rings}
\label {SEC:DerivationsofGgradedrings}

	\begin{definition}
		Let $(G,+)$ be an abelian group.
		A \textit{$G$-grading} of a ring $B$ is a family $\{B_g\}_{g \in G}$ of subgroups of $(B,+)$ such that
		$B = \bigoplus_{g \in G} B_g$
		and $B_g B_h \subseteq B_{g+h}$ for all $g, h \in G$.
		A \textit{$G$-graded ring} is a ring $B$ together with a $G$-grading (of $B$).
		In the special case where $G = \Integ$ and $B_i = 0$ for all $i < 0$,
		we say that $B$ is $\Nat$-graded and write $B = \bigoplus_{i \in \Nat} B_i$. 
	\end{definition}

	\begin{definitions}  \label {0j3w4c65mdry7u55gfyh6os}
		Suppose that $G$ is an abelian group and that  $B = \bigoplus_{i \in G} B_i$ is a $G$-graded ring. 
		
		\begin{itemize}
			
			\item A derivation $D : B \to B$ is \textit{homogeneous} if there exists an $h \in G$ such that $D(B_g) \subseteq B_{g+h}$ for all $g \in G$.
			If  $D$ is homogeneous and $D \neq 0$ then $h$ is unique and we say that $D$ is homogeneous of degree $h$.
			The zero derivation is homogeneous of degree $-\infty$.
			The set of homogeneous locally nilpotent derivations of $B$ is denoted $\hlnd(B)$. 
			
			\item A \textit{graded subring} of $B$ is a subring $A$ of $B$ satisfying  $A = \bigoplus_{g \in G} (A \cap B_g)$.
			If $A$ is a graded subring of $B$ then $A$ is a $G$-graded ring
			($A = \bigoplus_{g \in G} A_g$ is a $G$-grading of $A$, where we define $A_g = A \cap B_g$ for each $g \in G$).
			Note that if $D \in \hlnd(B)$ then $\ker D$ is a graded subring of $B$.
			
			\item If $A$ is a graded subring of $B$, define $G(A)$ to be the subgroup of $G$ generated by the set $\setspec{g \in G}{ A_g \neq 0 }$.

		\end{itemize}
	\end{definitions}
	
	\begin{definition}
		Suppose that $G$ is an abelian group and that  $B = \bigoplus_{i \in G} B_i$ is a $G$-graded integral domain.
		If $S$ is a multiplicatively closed subset of $\bigcup_{i \in G} ( B_i \setminus \{0\})$ such that $1 \in S$ then the localized ring
		$S^{-1}B$ is a $G$-graded ring in a natural way, and if we write $S^{-1}B = R = \bigoplus_{i \in G} R_i$ then
		the subring $R_0$ of $S^{-1}B$ called the \textit{homogeneous localization} of $B$ at $S$.
		Explicitly,
		$$
		R_0   
= \bigcup_{i \in G} \setspec{ \textstyle \frac bs }{ \text{$b \in B_i$ and $s \in B_i \cap S$} } .
		$$
		We define $\HFrac(B)$ to be the homogeneous localization of $B$ at $S = \bigcup_{i \in G} ( B_i \setminus \{0\})$.
		It is clear that $\HFrac(B)$ is a subfield of $\Frac(B)$;
		we call $\HFrac(B)$ the \textit{homogeneous field of fractions} of $B$.
	\end{definition}

	\begin{example} \label {HFracExample}
		If $B$ is an $\Nat$-graded integral domain such that $B \neq B_0$ then $\Proj B$ and $\Spec B$ are integral schemes,
		$\Frac(B)$ is the function field of $\Spec B$, $\HFrac(B)$ is the function field of $\Proj B$,
		and $\Frac(B) = (\HFrac(B))^{(1)}$.
(If $B = B_0$ then $\Proj B = \emptyset$ is not an integral scheme and hence does not have a function field.
Note that $\Frac(B) = \HFrac(B)$ whenever $B=B_0$.)
	\end{example}

	\begin{definition}  \label {Cc0vnv3949eWv23f0w}
		A field extension $L / \bk$ is \textit{ruled} if there exists a field $K$ such that $\bk \leq K \leq L$ and $L = K^{(1)}$.  
	\end{definition}

\begin{theorem} \label {mainResultGeneralized} 
Let $G$ be an abelian group and $B$ a $G$-graded integral domain of characteristic zero.
Let $D \in \hlnd(B) \setminus \{0\}$, let $A = \ker D$ and suppose that $G(A) = G(B)$.
\begin{enumerata}

\item $\HFrac(B) = (\HFrac (A))^{(1)}$

\item If $\bk$ is a field included in $B$ then $\bk \cap B_0$ is a field included in $\HFrac(A)$.

\item If $G$ is torsion-free and $\bk$ is a field included in $B$ then $\bk \subseteq \HFrac(A)$ and consequently $\HFrac(B)$ is ruled over $\bk$.

\end{enumerata}
\end{theorem}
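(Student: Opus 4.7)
My plan is to handle the three parts in turn, using a homogeneous preslice together with $G(A)=G(B)$ for part (a), factorial closure of $\ker D$ for part (b), and the ``units are homogeneous'' lemma for torsion-free graded domains for part (c).

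For (a), I would first produce a \emph{homogeneous} preslice of $D$: since $D\neq 0$, decomposing any $b$ with $D(b)\neq 0$ into homogeneous pieces yields some homogeneous $b_g$ with $D(b_g)\neq 0$, and then $t:=D^{k-1}(b_g)$, where $k=\deg_D(b_g)$, is homogeneous with $D(t)=\alpha\neq 0$ and $D^2(t)=0$. By \ref{p0cfi2k309cbqp90ws}(b), $B_\alpha=A_\alpha[t]=(A_\alpha)^{[1]}$, and both $t$ and $\alpha$ are homogeneous. Using $G(A)=G(B)$, the degree $d:=\deg(t)\in G(B)=G(A)$ can be written as $\sum n_i\deg(a_i)$ with $a_i\in A$ nonzero homogeneous and $n_i\in\Integ$, producing a nonzero $\alpha^*:=\prod a_i^{n_i}\in\Frac(A)$ homogeneous of degree $d$; then $\tau:=t/\alpha^*\in\HFrac(B)$ (write $\alpha^*=u/v$ with $u,v\in A$ homogeneous to see $\tau=tv/u$ is a ratio of equi-degree homogeneous elements of $B$), and $\tau$ is transcendental over $\Frac(A)$ because $t=\alpha^*\tau$ is. For any $x=b/s\in\HFrac(B)$ with $b,s\in B_g$ and $s\neq 0$, the polynomial expansions $b=\sum b_i t^i$, $s=\sum s_j t^j$ in $A_\alpha[t]$ combined with uniqueness of coefficients and homogeneity force $b_i\in (A_\alpha)_{g-id}$ and $s_j\in (A_\alpha)_{g-jd}$; substituting $t=\alpha^*\tau$ turns these into polynomials in $\tau$ with coefficients $B_i:=b_i(\alpha^*)^i$ and $S_j:=s_j(\alpha^*)^j$ that all lie in $\Frac(A)$ and are homogeneous of common degree $g$, so dividing through by any nonzero $S_{j_0}$ puts the coefficients into $\HFrac(A)$ and yields $x\in\HFrac(A)(\tau)$. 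Combined with the evident reverse containment and the transcendence of $\tau$, this gives $\HFrac(B)=(\HFrac(A))^{(1)}$.

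For (b), \ref{p0cfi2k309cbqp90ws}(a) says $A=\ker D$ is factorially closed in $B$, so $A$ contains every unit of $B$ and in particular $\bk\subseteq A$; since $A$ is a graded subring this gives $\bk\cap B_0=\bk\cap A_0\subseteq A_0\subseteq\HFrac(A)$. To verify $\bk\cap B_0$ is a field, take $0\neq u\in\bk\cap B_0$ and write $u^{-1}\in\bk\subseteq B$ as a finite homogeneous sum $u^{-1}=\sum_g v_g$; equating graded components of $u\cdot u^{-1}=1$ forces $uv_g=0$ for $g\neq 0$, so $v_g=0$ by the domain property, and $u^{-1}=v_0\in B_0$.

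For (c), the key classical input is that in a $G$-graded integral domain with $G$ torsion-free, every unit is homogeneous: fixing a compatible total order on $G$, if $uv=1$ then by the domain property $\max\supp(u)+\max\supp(v)=0=\min\supp(u)+\min\supp(v)$, forcing $\supp(u)$ and $\supp(v)$ to be singletons. Since every nonzero element of $\bk$ is a unit in $B$, it is homogeneous; if some such $x$ had degree $g\neq 0$, then the distinct degrees $ig$ of the monomials $c_ix^i$ (using torsion-freeness) would show $x$ is transcendental over $\Rat$, making $x-1\neq 0$ a unit of $B$ with non-singleton support $\{0,g\}$, a contradiction. Hence $\bk\subseteq B_0\cap A=A_0\subseteq\HFrac(A)$, and the ruling statement follows immediately from part (a) with $K:=\HFrac(A)$. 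The main subtlety lies in the grading-and-localization bookkeeping in part (a), where $G(A)=G(B)$ is used precisely to manufacture $\alpha^*$ of the correct degree so that $\tau$ both sits inside $\HFrac(B)$ and expresses arbitrary homogeneous-degree-zero fractions with coefficients in $\HFrac(A)$.
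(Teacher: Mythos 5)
Your proposal is correct, but for part (a) it takes a genuinely different route from the paper. The paper localizes at the set $S$ of nonzero homogeneous elements of $A$, rescales $S^{-1}D$ by a homogeneous fraction of degree $-\deg D$ (this is where $G(A)=G(B)$ enters first) to get a degree-zero homogeneous locally nilpotent derivation, restricts it to the degree-zero subring $B_{(S)}$ of $S^{-1}B$, checks via a homogeneous preslice that this restriction $D_{(S)}$ is nonzero with kernel $\HFrac(A)$, and then applies \ref{p0cfi2k309cbqp90ws}\eqref{teik5i68a9we} a second time to conclude $B_{(S)}=(\HFrac(A))^{[1]}$ and $\HFrac(B)=\Frac(B_{(S)})=(\HFrac(A))^{(1)}$. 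You never transport the derivation to the degree-zero ring: you work directly with the homogeneous preslice $t$ and the polynomial structure $B_\alpha=A_\alpha[t]$, use $G(A)=G(B)$ once to manufacture $\alpha^*\in\Frac(A)$ homogeneous of degree $\deg t$, and show by an explicit coefficient computation (homogeneity of the $b_i$, rescaling by $(\alpha^*)^i$ and dividing by a nonzero $S_{j_0}$) that every element of $\HFrac(B)$ is a rational function of $\tau=t/\alpha^*$ over $\HFrac(A)$. Your version is more elementary and self-contained, uses the hypothesis $G(A)=G(B)$ only once, and handles the case $B=B_0$ uniformly (the paper disposes of it separately); the paper's version is shorter on the computational side and yields slightly more structure, namely an explicit nonzero locally nilpotent derivation on $B_{(S)}$ with $B_{(S)}=(\HFrac(A))^{[1]}$, not just the equality of fields. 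For (b) and (c) you follow the paper's argument, except that you supply proofs of the two facts the paper only quotes (that $\bk\cap B_0$ is a field, and that a field contained in a domain graded by a torsion-free group lies in $B_0$, via homogeneity of units); this is fine, though note your argument silently uses the classical fact that a torsion-free abelian group admits a total order compatible with addition, and your detour through transcendence of $x$ over $\Rat$ is unnecessary, since $x=1$ would already force $\deg x=0$.
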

			
\begin{proof}
If $B=B_0$ then $\Frac(B) = \HFrac(B)$ and $\Frac(A) = \HFrac(A)$, so the Theorem follows from parts \eqref{c0ovjn3vr7} and \eqref{teik5i68a9we}
of \ref{p0cfi2k309cbqp90ws}.  So we may assume that $B \neq B_0$.

(a) Let $d = \deg D$ and $S = \bigcup_{g \in G} ( A_g \setminus \{0\})$.
Then $S^{-1}D : S^{-1}B \to S^{-1}B$ is nonzero and locally nilpotent, and also homogeneous of degree $d$.
Since $-d \in G(B)$ and $G(A) = G(B)$, there exist
$a,s \in S$ such that the homogeneous element $\frac as \in S^{-1}A$ has degree $-d$.
Then $\frac{a}{s} S^{-1}D : S^{-1}B \to S^{-1}B$ is a homogeneous derivation of degree $0$;
since $\frac{a}{s} \in \ker(S^{-1}D)$ and $S^{-1}D$ is locally nilpotent, $\frac{a}{s} S^{-1}D$ is locally nilpotent.
Let $B_{(S)}$ denote the degree 0 subring of $S^{-1}B$ and let $D_{(S)} : B_{(S)} \to B_{(S)}$ be the restriction of  $\frac{a}{s} S^{-1}D$.
Then $D_{(S)}$ is a locally nilpotent derivation whose kernel is $B_{(S)} \cap \ker\big( \frac{a}{s} S^{-1}D \big) = B_{(S)} \cap S^{-1}A = A_{(S)} = \HFrac(A)$.

We show that $D_{(S)}$ is nonzero. 
It is straightforward to verify that $D$ has a homogeneous preslice $t \in B$.
Since $G(A) = G(B)$, there exist $a',s' \in S$ such that $\deg(\frac{a'}{s'}) = -\deg(t)$.
Then $\frac{a't}{s'} \in B_{(S)}$ and, since $\frac{a'}{s'} \in \ker S^{-1}D$,
$D_{(S)}(\frac{a't}{s'}) =  \big( \frac{a}{s} S^{-1}D \big)(\frac{a't}{s'}) = \frac{a'}{s'} \frac{a}{s} S^{-1}D (t) \neq 0$.
		
Since $D_{(S)} \neq 0$ and $\ker D_{(S)} = \HFrac(A)$ is a field,
\ref{p0cfi2k309cbqp90ws}\eqref{teik5i68a9we} implies that $B_{(S)} = ( \HFrac (A) )^{[1]}$.
Using $G(A)=G(B)$ once again, we see that $\HFrac(B)$ is  the field of fractions of $B_{(S)}$; so $\HFrac(B) = (\HFrac (A))^{(1)}$.
This proves (a).

(b) If $\bk$ is a field included in $B$ then it is clear that $\bk \cap B_0$ is a field
and \ref{p0cfi2k309cbqp90ws}\eqref{c0ovjn3vr7} implies that $\bk \subseteq A$, so $\bk \cap B_0 \subseteq A \cap B_0 = A_0 \subseteq \HFrac(A)$. 

(c) It is well known that {\it if $G$ is a torsion-free abelian group and
$B$ is a $G$-graded domain then any field included in $B$ is in fact included in $B_0$}
(see \cite[Lemma 2.4.7]{ChitayatMScThesis}, for instance).
So in the present situation we have $\bk \subseteq B_0$; then (b) implies that $\bk \subseteq \HFrac(A)$ and (a) implies that $\HFrac(B)$ is ruled over $\bk$.
\end{proof}

\begin{remark}  \label {c09vb1923dchsDd}
In the special case where the grading is an $\Nat$-grading and is nontrivial ($B \neq B_0$),
Theorem~\ref{mainResultGeneralized}(c) asserts that the function field of $\Proj B$ is ruled over $\bk$.
(See Example~\ref{HFracExample}.) The same remark applies to Proposition~\ref{ckjv2p39wbpoqQds}, below.
\end{remark}

\begin{nothing*}[Homogenization] \label {cpijbo329eufds} 
Let $\bk$ be a field of characteristic zero and $B = \bigoplus_{i \in \Integ} B_i$ a $\Integ$-graded affine $\bk$-domain.
One can show that if $D \in \lnd(B)$ then the subset $S_D \overset{\text{\tiny\rm def}}{=} \setspec{ \deg( Dx ) - \deg(x) }{ x \in B \setminus \{0\} }$ 
of $\Integ \cup \{-\infty\}$ has a greatest element (where $\deg:B\to\Integ\cup\{-\infty\}$ is the degree function determined by the grading of $B$);
one defines $\deg(D) = \max(S_D) \in \Integ \cup \{-\infty\}$.
Clearly, $\deg(D)=-\infty$ $\Leftrightarrow$ $D=0$.
Also, if $D$ happens to be homogeneous then $\deg(D)$ coincides with the usual degree of a homogeneous derivation (Definition~\ref{0j3w4c65mdry7u55gfyh6os}).

If $D \in \lnd(B)$ then there is a natural way to define an element $\tilde D$ of $\hlnd(B)$ satisfying (in particular) $\deg(\tilde D) = \deg(D)$.
Indeed, if $D=0$ then set $\tilde D = 0$.
If $D \in \lnd(B) \setminus \{0\}$ then let $d = \deg(D) \in \Integ$ and define (for each $i \in \Integ$) a map $\tilde D_i : B_i \to B_{i+d}$
by $\tilde D_i (x) = p_{i+d}( D x )$, where $p_j : B \to B_j$ is the canonical projection;
then if $b = \sum_{i \in \Integ} b_i \in B$ ($b_i \in B_i$ for all $i$, $b_i=0$ for almost all $i$), define $\tilde D(b) = \sum_i \tilde D_i(b_i)$;
one can check that $\tilde D \in \hlnd(B) \setminus \{0\}$ and  $\deg(\tilde D) = \deg(D)$. Note in particular that
$D \neq 0 \implies \tilde D \neq 0$.
The derivation $\tilde D$ is sometimes called the \textit{homogenization} of $D$.
This is in fact a special case of the process of replacing $D : B \to B$ by $\Gr(D) : \Gr(B) \to \Gr(B)$,
so one can also call  $\tilde D$ the \textit{associated homogeneous derivation} of $D$.
Note the following consequence of the above discussion:
\begin{quote}
\it
Let $\bk$ be a field of characteristic zero and $B$ a $\Integ$-graded affine $\bk$-domain.
If $B$ is not rigid then $\hlnd(B) \neq \{0\}$.
\end{quote}
Refer to \cite{Dai:TameWild} for proofs of the claims made in \ref{cpijbo329eufds}, and for an in-depth treatment of this topic.
\end{nothing*}

\begin{definitions}	\label {typeDef}
Let $n\ge2$ and $S = (a_1, a_2,  \dots,  a_n) \in \Integ^n$.  
\begin{itemize}

\item Define\footnote{By convention, $\gcd(S)\ge0$ and $\lcm(S)\ge0$.}  $\gcd(S) = \gcd(a_1, \dots, a_n)$ and $\lcm(S) = \lcm(a_1,  \dots,  a_n)$.

\item If $\gcd(S) = 1$, we say that $S$ is \textit{normal}.
If $S \neq (0, \dots, 0)$ then the tuple $S' = (\frac{a_1}{d},  \dots,   \frac{a_n}{d})$ (where $d = \gcd(S)$) is normal,
and is called the \textit{normalization of $S$}.

\item For each $j \in \{1,\dots,n\}$, define $S_j = (a_1, \dots \, \widehat{a_j} \, \dots, a_n)$ ($j$-th component omitted).
More generally, given a proper subset $J$ of $\{1,\dots,n\}$ we define $S_J = (a_{i_1}, \dots, a_{i_s})$, where\\
$i_1 < \cdots < i_s$ are the elements of $\{1, \dots, n\} \setminus J$.
			
\item We define the sets
\begin{align*}
J^*(S) &= \setspec{i \in \{ 1, \dots, n \} }{ \gcd(S_i) \neq \gcd(S) } = \setspec{i \in \{ 1, \dots, n \} }{ \gcd(S_i) \nmid a_i } \\
J(S) &= \setspec{i \in \{ 1, \dots, n \} }{ \lcm(S_i) \neq \lcm(S) }  = \setspec{i \in \{ 1, \dots, n \} }{ a_i \nmid \lcm(S_i) }  .
\end{align*}
Then we define the \textit{type} and the \textit{cotype} of $S$ by:
$$
\type(S) = | J^*(S) | \quad \text{and} \quad \cotype(S) = | J(S) | .
$$
Note that $\type(S), \cotype(S) \in \{0,1,\dots,n\}$ and that, if $S'$ is the normalization of $S$,
then $\type(S) = \type(S')$ and $\cotype(S) = \cotype(S')$.

\end{itemize}  
\end{definitions}

\begin{proposition}  \label {ckjv2p39wbpoqQds}
Let $\bk$ be a field of characteristic zero and $B$ a $\Integ$-graded $\bk$-domain.
Suppose that there exist homogeneous prime elements $x_1, \dots, x_n$ $(n\ge2)$ of $B$ satisfying the following
conditions, where $d_i = \deg x_i$ for all $i$:
\begin{itemize}

\item $\langle d_1, \dots, d_n \rangle = \Integ(B)$ and  $\type( d_1, \dots, d_n ) = 0$

\item  for any choice of distinct $i,j$, the elements $x_i, x_j$ are not associates in $B$.

\end{itemize}
Then the following hold.
\begin{enumerata}

\item $\Integ(\ker D) = \Integ(B)$ for all $D  \in \hlnd(B)$.

\item If $B$ is $\bk$-affine\footnote{We mean that $B$ is finitely generated as a $\bk$-algebra.}  and not rigid then $\HFrac(B)$ is ruled over $\bk$.

\end{enumerata}
\end{proposition}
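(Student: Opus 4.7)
The plan is to concentrate on part (a); part (b) will then follow formally. For (a), assume $D \in \hlnd(B) \setminus \{0\}$ (the case $D=0$ is trivial), set $A = \ker D$, $e = \deg D$, and $m_\ell = \deg_D(x_\ell)$ for each $\ell$. For every $\ell$ the integer $d_\ell + m_\ell e$ lies in $\Integ(A)$: this is $d_\ell$ itself when $x_\ell \in A$, and otherwise is the degree of $D^{m_\ell}(x_\ell)$, a nonzero homogeneous element of $A$. Since the hypothesis $\type(d_1,\dots,d_n) = 0$ says that removing any single $d_{i_0}$ from $\{d_1,\dots,d_n\}$ does not change the generated subgroup of $\Integ$, it suffices to prove that the set $I_1 := \{\,i : d_i \notin \Integ(A)\,\}$ has at most one element. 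So suppose for contradiction that there are distinct $i,j \in I_1$; in particular $x_i, x_j \notin A$ and $m_i, m_j \geq 1$.

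Pick a homogeneous preslice $t$ of $D$, let $\alpha = D(t) \in A$, so that $B_\alpha = A_\alpha[t]$ with $D$ acting as $\alpha \partial_t$. Writing $x_\ell = \sum_k c_{\ell k} t^k$ in $A_\alpha[t]$, the factorial closedness of $A$ in $B$ together with the non-associateness of $x_i, x_j$ in $B$ ensures that $x_i, x_j$ remain non-associate primes in $B_\alpha$. The central construction is the Wronskian-like element $W_{ij} := \beta_j y_i - \beta_i y_j$, where $y_\ell := D^{m_\ell - 1}(x_\ell)$ and $\beta_\ell := D^{m_\ell}(x_\ell) \in A \setminus \{0\}$; the Leibniz rule gives $D(W_{ij}) = 0$, so $W_{ij} \in A$, and it is homogeneous of degree $d_i + d_j + (m_i + m_j - 1)e$. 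When $W_{ij} \neq 0$, this degree lies in $\Integ(A)$, and the identity $(d_i + m_i e) + (d_j + m_j e) - \bigl(d_i + d_j + (m_i + m_j - 1)e\bigr) = e$ forces $e \in \Integ(A)$; then $d_i = (d_i + m_i e) - m_i e \in \Integ(A)$, contradicting $i \in I_1$.

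The main obstacle is ruling out $W_{ij} = 0$; a direct computation in $A_\alpha[t]$ shows that this vanishes exactly when the ``depressed leading coefficients'' agree, i.e., $c_{i, m_i - 1}/(m_i c_{i, m_i}) = c_{j, m_j - 1}/(m_j c_{j, m_j})$. In this degenerate case the plan is to iterate by considering $E_k := \beta_j D^{m_i - k}(x_i) - \beta_i D^{m_j - k}(x_j)$ for $k = 1, 2, \dots$; each vanishing $E_k = 0$ forces $E_{k+1} \in A$, and the non-associateness of $x_i, x_j$ in $B_\alpha = A_\alpha[t]$ guarantees that this cascade terminates at some smallest $k^* \geq 1$ with $E_{k^*} \neq 0$, yielding $k^* e \in \Integ(A)$. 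Promoting this to $e \in \Integ(A)$ itself is the delicate step and would combine the relation $u + e \in \Integ(A)$ (from $\alpha$, where $u = \deg t$) with the leading-coefficient relations $d_\ell - m_\ell u \in \Integ(A)$ (from $c_{\ell, m_\ell} \neq 0$ in $A_\alpha$). For part (b): if $B$ is $\bk$-affine and not rigid, then \ref{cpijbo329eufds} produces a nonzero $D \in \hlnd(B)$; when $B = B_0$ the conclusion is immediate from \ref{p0cfi2k309cbqp90ws}\eqref{teik5i68a9we}, and otherwise $\Integ(B) \neq 0$ and part (a) together with Theorem~\ref{mainResultGeneralized}(c), applied to the torsion-free group $G = \Integ$, gives that $\HFrac(B)$ is ruled over $\bk$.
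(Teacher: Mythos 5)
Your reduction of (a) to the claim that $I_1=\setspec{i}{d_i\notin \Integ(A)}$ has at most one element is correct, the relation $d_\ell+m_\ell e\in\Integ(A)$ is correct, the non-degenerate case ($W_{ij}\neq 0$ gives $e\in\Integ(A)$, hence $d_i\in\Integ(A)$) is correct, and your part (b) coincides with the paper's argument (homogenization via \ref{cpijbo329eufds}, then part (a), then Theorem~\ref{mainResultGeneralized}(c)). But the heart of (a) is precisely the degenerate case $W_{ij}=0$, and there your proposal has a genuine gap, in two places. First, termination of the cascade: non-associateness does force a nonzero $E_{k^*}$ when $m_i=m_j$ (if $\beta_jx_i=\beta_ix_j$ then $x_i\mid\beta_i$ since $x_i\nmid x_j$, and factorial closedness of $A$ gives $x_i\in A$, a contradiction), but when $m_i<m_j$ the terminal relation $\beta_jx_i=\beta_iD^{m_j-m_i}(x_j)$ only yields $x_i\mid D^{m_j-m_i}(x_j)$, which is not a contradiction; so you have not shown that some $E_{k^*}$ is nonzero. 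Second, even granting a smallest $k^*$ with $E_{k^*}\in A\setminus\{0\}$, you only obtain $k^*e\in\Integ(A)$, whereas you need $m_ie\in\Integ(A)$ (equivalently $d_i\in\Integ(A)$); and the auxiliary relations you propose for ``promoting'' this are vacuous, since $d_\ell-m_\ell u=(d_\ell+m_\ell e)-m_\ell(u+e)$ shows the leading-coefficient relation is already a consequence of $d_\ell+m_\ell e\in\Integ(A)$ and $u+e\in\Integ(A)$, so no new information forces $e\in\Integ(A)$. The ``delicate step'' you flag is therefore a missing idea, not a routine verification, and the proof of (a) is incomplete as written.

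For comparison: the paper does not prove (a) by such a computation at all. It observes that the hypothesis ($\langle d_1,\dots,d_n\rangle=\Integ(B)$ and $\type(d_1,\dots,d_n)=0$) is equivalent to saying that every $(n-1)$-element subset of $\{d_1,\dots,d_n\}$ generates $\Integ(B)$, and then quotes the case $G=\Integ$ of Corollary~4.2 of \cite{LNDsAbelianGroup}. So either invoke that result, as the paper does, or supply a complete argument covering the case $W_{ij}=0$; your treatment of (b) can stand once (a) is secured.
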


\begin{proof}
The assumption $\langle d_1, \dots, d_n \rangle = \Integ(B)$ and  $\type( d_1, \dots, d_n ) = 0$
is equivalent to:
\begin{center}
for each subset $I \subset \{1, \dots, n\}$ of cardinality $n - 1$, $\setspec{ \deg(x_i) }{ i \in I }$ generates $\Integ(B)$.
\end{center}
So assertion (a) is the special case $G=\Integ$ of \cite[Corollary~4.2]{LNDsAbelianGroup}.
To prove (b), suppose that $B$ is $\bk$-affine and not rigid; then \ref{cpijbo329eufds} implies that $\hlnd(B) \setminus \{0\} \neq \emptyset$.
Pick $D \in \hlnd(B) \setminus \{0\}$, then $\Integ( \ker D ) = \Integ(B)$ by part (a),
so Theorem~\ref{mainResultGeneralized}(c) implies that  $\HFrac(B)$ is ruled over $\bk$.
\end{proof}

	\section{Rigidity of Pham-Brieskorn Rings}
\label {SEC:RigidityofPhamBrieskornRings}
	
Let $\bk$ be a field of characteristic zero.

\begin{definition} \label {8vroq98vbcu56msw9nw0e} 
Given $n\ge3$ and $S = (a_1, \dots, a_n) \in (\Nat \setminus \{0\})^n$, we use the notation
$$
B_S = B_{a_1, \dots, a_n} = \bk[X_1, \dots ,X_n]/ \lb X_1^{a_1} +  \cdots  + X_n^{a_n} \rb .
$$
The $\bk$-domain $B_{a_1, \dots, a_n}$ is called a \textit{Pham-Brieskorn ring}.
We use the capital letters $X_i$ to represent the variables in the polynomial ring,
and define $x_i = \pi(X_i) \in B_S$ where $\pi : \bk[X_1, \dots ,X_n] \to B_S$ is the canonical quotient map.
Thus $B_S = \bk[x_1, \dots, x_n]$.
Let $(d_1,\dots,d_n) =  \big( \frac L{a_1}, \dots,  \frac L{a_n} \big)$, where $L = \lcm(S)$;  
then there is a unique $\Nat$-grading of $B_S$ with the property that $x_i$ is homogeneous of degree $d_i$ for all $i=1,\dots,n$.
We call it the {\it standard $\Nat$-grading} of $B_S$.
\end{definition}

\begin{lemma} \label {equivalentType2}
Given $n\ge2$ and $S = (a_1,\dots,a_n) \in (\Nat \setminus \{0\})^n$,
define $\bar{S} = \big( \frac L{a_1}, \dots,  \frac L{a_n} \big)$ where $L = \lcm(S)$.
Then the following hold.
\begin{enumerata}

\item $\bar S$ is normal and $\overline{ (\bar S) }$ is the normalization of $S$.

\item $J^*( \bar S ) = J(S)$\ \  and\ \ $J^*(S) = J(\bar S)$.

\item  $\type(\bar S) = \cotype( S )$\ \ and\ \ $\type(S) = \cotype( \bar S )$.

\end{enumerata}
\end{lemma}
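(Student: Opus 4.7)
The plan is to derive everything from two classical gcd--lcm duality identities: whenever $L$ is a common multiple of positive integers $a_1,\dots,a_n$, one has
\[
\gcd(L/a_1,\dots,L/a_n)\cdot\lcm(a_1,\dots,a_n)\;=\;L\;=\;\lcm(L/a_1,\dots,L/a_n)\cdot\gcd(a_1,\dots,a_n).
\]
Throughout I would set $L=\lcm(S)$ and $d=\gcd(S)$.

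For (a), the first identity yields $\gcd(\bar S)=L/\lcm(S)=1$, so $\bar S$ is normal, and the second gives $\lcm(\bar S)=L/d$. Hence
\[
\overline{\bar S}=\bigl((L/d)\big/(L/a_i)\bigr)_{i=1}^{n}=(a_1/d,\dots,a_n/d),
\]
which is exactly the normalization of $S$.

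For (b), I would apply the first identity to the sub-tuple $S_i$, still using $L$ as the ambient common multiple (it is one, even if not the lcm of $S_i$):
\[
\gcd(\bar S_i)=\gcd(L/a_j : j\ne i)=L/\lcm(S_i).
\]
Since $\gcd(\bar S)=1$ by (a), the index $i$ lies in $J^*(\bar S)$ iff $L/\lcm(S_i)>1$ iff $\lcm(S_i)\ne\lcm(S)$ iff $i\in J(S)$; this proves $J^*(\bar S)=J(S)$. Applying this equality with $\bar S$ in place of $S$ gives $J^*(\overline{\bar S})=J(\bar S)$, and combining with part (a) together with the elementary fact that $J^*$ is unchanged when a tuple is replaced by its normalization (dividing every $a_j$ by $d$ scales both $\gcd(S)$ and $\gcd(S_i)$ by $d$, so the condition $\gcd(S_i)\ne\gcd(S)$ is preserved) yields $J^*(S)=J^*(\overline{\bar S})=J(\bar S)$. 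Part (c) is then immediate from (b) by taking cardinalities. The whole argument is bookkeeping with the two duality identities and I foresee no genuine obstacle; only the invariance of $J^*$ under normalization calls for a brief verification.
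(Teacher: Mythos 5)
Your argument is correct: the two gcd--lcm duality identities $\gcd(L/a_1,\dots,L/a_n)=L/\lcm(S)$ and $\lcm(L/a_1,\dots,L/a_n)=L/\gcd(S)$ (applied also to the sub-tuples $S_i$ with $L$ as ambient common multiple) give (a) and $J^*(\bar S)=J(S)$ directly, and the normalization-invariance of $J^*$ that you flag is the only extra check needed for $J^*(S)=J(\bar S)$, with (c) following by cardinality. The paper states that the proof is left to the reader, so there is no written proof to compare with, but your bookkeeping is exactly the kind of verification the authors intend and it is complete.
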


The proof of Lemma~\ref{equivalentType2} is left to the reader.
We deduce the following useful triviality:

\begin{lemma}  \label {pc04b98wjwk77subrq9a}
Let $n \ge4$ and $S = (a_1, \dots, a_n) \in (\Nat \setminus \{0\})^n$, and consider $B_S$,
$x_1, \dots, x_n$ and $(d_1, \dots, d_n)$ as in Definition~\ref{8vroq98vbcu56msw9nw0e}.
\begin{enumerata}

\item $\gcd(d_1,\dots,d_n) = 1$\ \  and\ \ $\type(d_1,\dots,d_n) = \cotype(a_1,\dots,a_n)$

\item For each $i \in \{1, \dots, n\}$, we have
$$
\gcd(d_1, \dots \widehat{d_i} \dots, d_n) \neq 1 \iff \lcm(a_1, \dots \widehat{a_i} \dots, a_n) \neq \lcm(S) \iff i \in J(S) .
$$

\item $x_1, \dots, x_n$ are homogeneous prime elements of $B_S$ and are pairwise non-associates.

\end{enumerata}
\end{lemma}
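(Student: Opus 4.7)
The plan is to derive (a) and (b) as straightforward translations via Lemma~\ref{equivalentType2}, and to handle (c) by combining the definition of the standard grading with the Pham-Brieskorn structure of the relevant quotient rings.

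For (a) and (b), the key observation is that the tuple $(d_1,\dots,d_n)$ is literally $\bar S$ in the notation of Lemma~\ref{equivalentType2}. Part~(a) of that lemma says $\bar S$ is normal, giving $\gcd(d_1,\dots,d_n)=1$; part~(c) gives $\type(\bar S)=\cotype(S)$, which is the remaining half of our~(a). For~(b), the second equivalence is literally the definition of $J(S)$, while the first uses Lemma~\ref{equivalentType2}(b), namely $J^*(\bar S)=J(S)$: combined with normality of $\bar S$, membership in $J^*(\bar S)$ reduces to $\gcd(\bar S_i)\ne\gcd(\bar S)=1$, i.e.\ to $\gcd(d_1,\dots,\widehat{d_i},\dots,d_n)\neq1$.

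For (c), homogeneity of $x_i$ is immediate from Definition~\ref{8vroq98vbcu56msw9nw0e}. To establish primality, I would exhibit the natural surjection $B_S \twoheadrightarrow B_{S_i}$ sending $x_i \mapsto 0$ and $x_j \mapsto x_j$ for $j\neq i$, identify its kernel as $\langle x_i\rangle$, and invoke the fact---built into the very definition of Pham-Brieskorn rings, and applicable because $n-1\ge 3$---that $B_{S_i}$ is an integral domain; since $x_i$ is homogeneous of positive degree $d_i>0$ and therefore nonzero, this makes $\langle x_i\rangle$ a nonzero prime ideal. For the non-associate assertion, observe that the standard $\Nat$-grading places every generator in strictly positive degree, whence $B_{S,0}=\bk$ and consequently $B_S^*=\bk^*$. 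A hypothetical relation $x_i=c\,x_j$ with $c\in\bk^*$ and $i\neq j$ then forces $d_i=d_j$ by homogeneity, and the resulting membership $X_i-cX_j\in\langle X_1^{a_1}+\cdots+X_n^{a_n}\rangle$ in the polynomial ring is refuted either by a degree comparison in the standard grading (when $d_i<L$, the ideal $\langle g\rangle$ contains no nonzero element of degree $<L$), or---in the remaining edge case $a_i=a_j=1$, so $d_i=d_j=L$---by observing that $g=\sum_k X_k^{a_k}$ is a sum of $n\ge 4$ distinct monomials in different variables and so cannot be a scalar multiple of a two-term polynomial $X_i-cX_j$.

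I expect the only substantive ingredient to be the primality step, which silently relies on the irreducibility of $\sum_{j\neq i}X_j^{a_j}$ in $n-1\ge 3$ variables, equivalently on the fact that Pham-Brieskorn rings with at least three variables are integral domains. This is already absorbed into Definition~\ref{8vroq98vbcu56msw9nw0e} and used freely throughout the paper, so within the logical framework of the paper the entire lemma really is a bookkeeping exercise on top of Lemma~\ref{equivalentType2} and the properties of the standard grading.
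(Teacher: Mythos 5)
Your proposal is correct and follows the same route as the paper: parts (a) and (b) are read off from Lemma~\ref{equivalentType2} via the identification $(d_1,\dots,d_n)=\bar S$, and primality in (c) comes from $B_S/\langle x_i\rangle\isom B_{S_i}$ being a domain since $n\ge4$. The only difference is that you spell out the non-associates step (units are $\bk^*$ by the positive grading, plus a degree comparison), which the paper dismisses as clear; your added detail is sound.
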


\begin{proof}
Since $(d_1,\dots,d_n) = \bar S$, assertions (a) and (b) follow from Lemma~\ref{equivalentType2}.
For each $i$,
define $S_i$ as in Definition~\ref{typeDef}; then $B_S / \langle x_i \rangle \isom B_{S_i}$, which is a domain because $n \ge 4$.
So $x_1, \dots, x_n$ are homogeneous prime elements of $B_S$, and it is clear that they are pairwise non-associates.
\end{proof}

Recall that a $\bk$-variety $X$ is said to be \textit{ruled} if the function field of $X$ is ruled over $\bk$,
in the sense of Definition~\ref{Cc0vnv3949eWv23f0w}.

\begin{theorem}  \label {dp0c9vn239efps0}
Let $n\ge4$, and let $S \in (\Nat \setminus \{0\})^n$ be such that $\cotype(S)=0$.
\begin{enumerata}

\item We have $\Integ( \ker D ) = \Integ$ for all $D \in \hlnd(B_S)$.

\item If $B_S$ is not rigid, then the projective $\bk$-variety $\Proj B_S$ is ruled over $\bk$.

\end{enumerata}
\end{theorem}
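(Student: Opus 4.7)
The plan is to reduce both parts of the theorem to Proposition~\ref{ckjv2p39wbpoqQds} by verifying its hypotheses for $B = B_S$ equipped with its standard $\Nat$-grading (viewed as a $\Integ$-grading), using the homogeneous generators $x_1, \dots, x_n$ of Definition~\ref{8vroq98vbcu56msw9nw0e}.

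First, I would observe that $B_S$ is a $\Integ$-graded $\bk$-domain (the Pham-Brieskorn ring is known to be an integral domain for $n \ge 3$), that it is finitely generated as a $\bk$-algebra, and that the grading is non-trivial because each $d_i = L/a_i$ is strictly positive (in particular $B_S \neq (B_S)_0$, which matters for the identification of $\HFrac(B_S)$ via Example~\ref{HFracExample}).

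Next I would check the two bulleted hypotheses of Proposition~\ref{ckjv2p39wbpoqQds} applied to $x_1,\dots,x_n$. By Lemma~\ref{pc04b98wjwk77subrq9a}(c), the $x_i$ are homogeneous prime elements of $B_S$ which are pairwise non-associates, handling the second bullet. For the first bullet, Lemma~\ref{pc04b98wjwk77subrq9a}(a) gives $\gcd(d_1,\dots,d_n) = 1$, so $\Integ(B_S) = \Integ = \langle d_1,\dots,d_n\rangle$, and also $\type(d_1,\dots,d_n) = \cotype(a_1,\dots,a_n) = 0$ by the standing hypothesis $\cotype(S)=0$.

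With all hypotheses of Proposition~\ref{ckjv2p39wbpoqQds} verified, part~(a) of that proposition instantly yields $\Integ(\ker D) = \Integ(B_S) = \Integ$ for every $D \in \hlnd(B_S)$, which is part~(a) of the theorem. For part~(b), assume $B_S$ is not rigid; since $B_S$ is $\bk$-affine, Proposition~\ref{ckjv2p39wbpoqQds}(b) yields that $\HFrac(B_S)$ is ruled over $\bk$, and by Example~\ref{HFracExample} the field $\HFrac(B_S)$ is the function field of $\Proj B_S$, so $\Proj B_S$ is a ruled $\bk$-variety. There is no real obstacle here: the theorem is essentially a repackaging of Proposition~\ref{ckjv2p39wbpoqQds} for the Pham-Brieskorn setting, with Lemma~\ref{pc04b98wjwk77subrq9a} doing the work of translating $\cotype(S)=0$ into the type-zero hypothesis on the degrees.
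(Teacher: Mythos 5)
Your proposal is correct and follows essentially the same route as the paper: verify the hypotheses of Proposition~\ref{ckjv2p39wbpoqQds} via Lemma~\ref{pc04b98wjwk77subrq9a} (with $\type(d_1,\dots,d_n)=\cotype(S)=0$ and the $x_i$ homogeneous prime pairwise non-associates), then conclude via that proposition and Example~\ref{HFracExample} identifying $\HFrac(B_S)$ with the function field of $\Proj B_S$. The extra checks you spell out (domain, $\bk$-affine, nontrivial grading) are exactly what the paper leaves implicit.
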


\begin{proof}
By Example~\ref{HFracExample}, the function field of $\Proj B_S$ is $\HFrac(B_S)$.
Consider $x_1, \dots, x_n$ and $(d_1, \dots, d_n)$ as in Definition~\ref{8vroq98vbcu56msw9nw0e}.
By Lemma \ref{pc04b98wjwk77subrq9a},  $x_1, \dots, x_n$  satisfy the hypothesis of  Proposition~\ref{ckjv2p39wbpoqQds}
(in particular $\type(d_1,\dots,d_n)= \cotype(S)=0$).
So Proposition~\ref{ckjv2p39wbpoqQds} implies that (a) and (b) hold.
\end{proof}

\begin{remark} \label {c09238effgCq26}
If $a \ge n \ge 4$ then the Fermat variety
$F_{a,n} = \Proj\big( \bk[X_1,\dots,X_n] / \langle X_1^a + \cdots + X_n^a \rangle \big)$
is not uniruled, hence not ruled.
(Over a field of characteristic zero,
a smooth hypersurface of degree $d$ in $\proj^N$ is uniruled if and only if $d \le N$.)
\end{remark}

\begin{corollary}  \label {pcvn394rfwseEh39e}
If $S = (\underbrace{a, \dots, a}_n)$ satisfies $a \ge n \ge 4$, then $B_S$ is rigid.
\end{corollary}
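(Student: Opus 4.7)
The plan is to deduce this from Theorem~\ref{dp0c9vn239efps0}(b) together with Remark~\ref{c09238effgCq26}; the work is essentially packaged into those two results, so the corollary should follow by a short contradiction argument.

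First I would verify that the tuple $S = (a,\dots,a) \in (\Nat\setminus\{0\})^n$ satisfies the hypothesis $\cotype(S) = 0$ required by Theorem~\ref{dp0c9vn239efps0}. This is immediate from the definitions: for every $i \in \{1,\dots,n\}$ we have $\lcm(S_i) = a = \lcm(S)$, so $J(S) = \emptyset$ and hence $\cotype(S) = |J(S)| = 0$. Combined with the assumption $n \ge 4$, the hypothesis of Theorem~\ref{dp0c9vn239efps0} is met.

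Next, I would argue by contradiction. Suppose $B_S$ is not rigid. Then by Theorem~\ref{dp0c9vn239efps0}(b) the projective $\bk$-variety $\Proj B_S$ is ruled over $\bk$. But with $S = (a,\dots,a)$ (so that the standard $\Nat$-grading coincides with the standard grading of the homogeneous coordinate ring) we have $\Proj B_S = F_{a,n}$, the Fermat hypersurface of degree $a$ in $\proj^{n-1}$. Since $a \ge n \ge 4$, Remark~\ref{c09238effgCq26} tells us that $F_{a,n}$ is not uniruled, and in particular not ruled, over $\bk$. This contradicts the conclusion drawn from Theorem~\ref{dp0c9vn239efps0}(b), so $B_S$ must be rigid.

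There is no real obstacle here: the corollary is just the convergence of two prior results. The only points that need a moment's care are (i) confirming that $\cotype(S) = 0$ from the definition and (ii) correctly identifying $\Proj B_S$ with the Fermat variety $F_{a,n}$ so that Remark~\ref{c09238effgCq26} is directly applicable. With those trivial checks, the contradiction closes the proof.
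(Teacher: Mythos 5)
Your proof is correct and follows exactly the paper's own argument: check $\cotype(S)=0$, then use Theorem~\ref{dp0c9vn239efps0}(b) to conclude $\Proj B_S = F_{a,n}$ would be ruled if $B_S$ were not rigid, contradicting Remark~\ref{c09238effgCq26}. Nothing further is needed.
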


\begin{proof}
Note that $\cotype(S)=0$.
If $B_S$ is not rigid then (by Theorem~\ref{dp0c9vn239efps0}) $\Proj B_S = F_{a,n}$ is ruled, which is not the case by Remark~\ref{c09238effgCq26}.
\end{proof}

We shall now develop a different approach for proving that $B_S$ is rigid in certain cases.
We need the following result, which is part (b) of Corollary 3.3 of \cite{freudenburg2013}.
See \ref{iuh298grhgnfmdu7ueMc} for the definition of $|u|_A$.

\begin{corollary} \label {rigidDegree2}
Suppose $R$ is a $\Integ$-graded affine $\bk$-domain, $f \in R$ is homogeneous, 
and $n \geq 2$ is an integer not dividing $\deg f$. Set $g = \gcd(n, \deg f)$, define the rings
$$
A = R[u] / \lb f + u^g \rb \text{\ \ and\ \ } B = R[z] / \lb f + z^n \rb
$$
and assume that $B$ is a domain.  Then $A$ is a domain and
$$
\text{$B$ is rigid if and only if  $|u|_A \geq 2$.}
$$
\end{corollary}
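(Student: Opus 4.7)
The plan is to realize $B$ as a degree-$m$ cyclic cover of $A$ (where $m = n/g \geq 2$ by the assumption $n \nmid \deg f$) and to analyze LNDs through this cover. Since $(z^m)^g + f = z^n + f = 0$ in $B$, the map $u \mapsto z^m$ extends to an injective graded $\bk$-algebra homomorphism $A \hookrightarrow B$, realizing $B$ as $A[z]/\langle z^m - u\rangle$, a free $A$-module of rank $m$ with basis $\{1, z, \dots, z^{m-1}\}$. Because $B$ is a domain, so is the subring $A$, which establishes the first assertion.

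For the direction $|u|_A \leq 1 \Rightarrow B$ is not rigid, the case $|u|_A = 0$ is immediate: any nonzero $D \in \lnd(A)$ with $D(u) = 0$ extends to $B$ via $D(z) = 0$ (well-defined since $D(z^m - u) = 0$, and locally nilpotent because $B$ is finitely generated over $A$). For $|u|_A = 1$, pick $D \in \lnd(A) \setminus \{0\}$ with $\alpha := D(u) \neq 0$ and $D^2(u) = 0$, and set $K = \ker D$. By \ref{p0cfi2k309cbqp90ws}\eqref{teik5i68a9we}, $A_\alpha = K_\alpha[u]$, which combined with $u = z^m$ gives $B_\alpha = K_\alpha[z]$, a polynomial ring. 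The partial derivative $\partial/\partial z$ is locally nilpotent on $K_\alpha[z]$ and has $\alpha$ in its kernel. Since $R$ is a finitely generated $\bk$-algebra, the expressions for its generators in $A_\alpha = K_\alpha[u]$ involve only finitely many powers of $\alpha$ as denominators; so for $N$ sufficiently large, $\alpha^N \partial/\partial z$ restricts to a nonzero element of $\lnd(B)$ (it sends $z$ to $\alpha^N \neq 0$).

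For the converse, assume $B$ is not rigid and select a nonzero $D' \in \hlnd(B)$ via the homogenization principle \ref{cpijbo329eufds}. Using the $A$-module decomposition $B = \bigoplus_{i=0}^{m-1} A z^i$, write $D'(a) = \sum_{i=0}^{m-1} \delta_i(a) z^i$ for each $a \in A$; the Leibniz rule makes each $\delta_i : A \to A$ a $\bk$-derivation. The goal is to extract from the $\delta_i$ a nonzero $\delta \in \lnd(A)$ with $\deg_\delta(u) \leq 1$, chosen from one homogeneous component of $D'$ (possibly after adjoining a primitive $m$-th root of unity $\zeta$ and exploiting the Galois action $z \mapsto \zeta z$ of $B/A$). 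The identity $D'(u) = m z^{m-1} D'(z)$, together with the local nilpotence of $D'$, then translates into the required bound $\deg_\delta(u) \leq 1$, contradicting $|u|_A \geq 2$.

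The main obstacle is the converse direction. Although the $\delta_i$ are automatically $\bk$-derivations of $A$, local nilpotence is not automatic: $\delta_0^2(a)$ is not simply the $z^0$-coefficient of $D'^2(a)$, because of cross-terms arising from $D'(z)$ and from the interplay of distinct $\delta_j$'s. Isolating the correct homogeneous component of $D'$ whose restriction to $A$ is a nonzero LND, and then reading off the preslice behaviour of $u$, is where the real work lies.
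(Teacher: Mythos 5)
Your first paragraph and your ``$|u|_A \le 1 \Rightarrow B$ not rigid'' direction are essentially correct: the map $u \mapsto z^m$ (with $m = n/g \ge 2$ because $n \nmid \deg f$) does embed $A$ into $B$ with $B = \bigoplus_{i=0}^{m-1} A z^i$, so $A$ is a domain; the case $D(u)=0$ extends by $D(z)=0$, and in the preslice case the identification $B_\alpha = A_\alpha[z]/\langle z^m - u\rangle = K_\alpha[z]$ followed by clearing denominators in $\alpha^N \partial/\partial z$ is a legitimate way to produce a nonzero element of $\lnd(B)$. Note, for calibration, that the paper itself gives no proof of this statement: it is quoted verbatim as Corollary 3.3(b) of \cite{freudenburg2013}, so any self-contained argument must in particular reprove that reference.

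The genuine gap is the implication you leave as a plan: if $B$ is not rigid then $|u|_A \le 1$ (equivalently, $|u|_A \ge 2 \Rightarrow B$ rigid), which is the half of the equivalence the present paper actually uses (e.g.\ in Proposition~\ref{cljbo238ecv0}) and is the substantial content of the cited result. Your proposed route --- write a homogeneous $D' \in \hlnd(B)\setminus\{0\}$ as $D'(a) = \sum_{i=0}^{m-1}\delta_i(a)z^i$ and extract from the $\delta_i$ a nonzero element of $\lnd(A)$ with $\deg(u)\le 1$ --- stalls exactly where you say it does: iterates of $D'$ mix the components through $D'(z)$ and the relation $z^m=u$, so no $\delta_i$ is known to be locally nilpotent, nor even nonzero; and the Galois idea does not repair this, since conjugating $D'$ by $z\mapsto \zeta z$ only yields other locally nilpotent derivations of $B\otimes_\bk\bk(\zeta)$, and linear combinations or averages of locally nilpotent derivations are not locally nilpotent in general. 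In addition, the homogenization step \ref{cpijbo329eufds} requires you to first equip $B=R[z]/\langle f+z^n\rangle$ with a $\Integ$-grading (rescaling the grading of $R$ by $n$ and giving $z$ degree $\deg f$), and it is only through such a graded analysis --- where the hypotheses that $f$ is homogeneous and $n\nmid\deg f$ really enter --- that the reduction from an arbitrary nonzero locally nilpotent derivation of $B$ to one of $A$ having $u$ of degree at most $1$ is carried out in \cite{freudenburg2013}. As written, your proposal proves one implication and the domain claim, but not the equivalence.
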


\begin{definition} \label{partialOrdering}
Let $n\geq3$.
\begin{enumerata}

\item Given $S = (a_1,\dots,a_n) \in (\Nat\setminus\{0\})^n$ and $i \in \{1, \dots, n\}$, define $g_i(S) = \gcd(a_i, \lcm(S_i))$ (recalling the definition of $S_i$ from Definition \ref{typeDef}).

\item Let $S = (a_1,\dots,a_n)$ and $S' = (a_1',\dots,a_n')$ be elements of $(\Nat\setminus\{0\})^n$ and let $i \in \{1, \dots, n\}$.
We write $S \le^i S'$ if and only if
$$
S_i = S'_i \quad \text{and} \quad g_i(S') \mid a_i \mid a_i' .
$$
We write $S <^i S'$ if and only if $S \le^i S'$ and $S \neq S'$.
\end{enumerata}
Observe that $\le^i$ is a partial order on $(\Nat\setminus\{0\})^n$
(transitivity follows from the fact that if $S \le^i S'$ then $g_i(S) = g_i(S')$).
\end{definition}

\begin{proposition} \label {cljbo238ecv0}
Let $n\ge3$.
\begin{enumerata}

\item Suppose that $S^*, S \in (\Nat\setminus\{0\})^n$  and $i \in \{1,\dots,n\}$ satisfy $S^* \le^i S$.  If $B_{S^*}$ is rigid then so is $B_{S}$.

\item Let $S, S' \in (\Nat\setminus\{0\})^n$, and suppose that there exist $i \in \{1,\dots,n\}$ and
$S^* \in (\Nat\setminus\{0\})^n$ such that $S^* <^i S$ and $S^* <^i S'$. 
Then $B_S$ is rigid if and only if $B_{S'}$ is rigid.

\end{enumerata}
\end{proposition}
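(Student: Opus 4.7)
The plan is to apply Corollary \ref{rigidDegree2} to analyze $B_S$ and $B_{S^*}$ (respectively $B_{S'}$) as degree-$n$ extensions of a common base, and to reduce rigidity of both to the same auxiliary quantity $|u|_{B_T} \ge 2$. Fix $i$, set $L = \lcm(S_i)$ and $g = g_i(S) = \gcd(a_i, L)$, and let $R = \bk[X_1,\dots,\widehat{X_i},\dots,X_n]$ be the polynomial ring graded by $\deg X_j = L/a_j$ for each $j \ne i$ (a positive integer since $a_j \mid L$). Put $f = \sum_{j \ne i} X_j^{a_j}$, which is homogeneous of degree $L$. Writing $T = (a_1,\dots,a_{i-1},g,a_{i+1},\dots,a_n)$, we have ring isomorphisms $B_S \cong R[z]/\langle f + z^{a_i}\rangle$ and $B_T \cong R[u]/\langle f + u^g\rangle$.

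For (a) we may assume $S^* <^i S$, so $g \mid a_i^* \mid a_i$ with $a_i^* < a_i$. This forces $g < a_i$, whence $\gcd(a_i,L) = g$ implies $a_i \nmid L$. Now Corollary \ref{rigidDegree2} applies and gives: $B_S$ is rigid if and only if $|u|_{B_T} \ge 2$. A short computation using $S^*_i = S_i$ together with $g \mid a_i^* \mid a_i$ shows that $g_i(S^*) = g$. If $a_i^* = g$ then $S^* = T$, and $B_{S^*}$ rigid gives $|u|_{B_{S^*}} = \infty \ge 2$, so $B_S$ is rigid. If instead $g < a_i^*$, the same reasoning (with $S^*$ in place of $S$) shows $a_i^* \nmid L$, and Corollary \ref{rigidDegree2} yields $B_{S^*}$ rigid if and only if $|u|_{B_T} \ge 2$; combined with the equivalence for $S$ this gives $B_{S^*}$ rigid $\Rightarrow B_S$ rigid, proving (a).

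For (b), the hypotheses $S^* <^i S$ and $S^* <^i S'$ give $S_i = S^*_i = S'_i$, so $S$ and $S'$ agree off the $i$-th coordinate and share the same $T$ (the $i$-th coordinate of $T$ is $g = g_i(S) = g_i(S')$). Both are covered by the analysis used in (a), and we obtain $B_S$ rigid $\Leftrightarrow |u|_{B_T} \ge 2 \Leftrightarrow B_{S'}$ rigid.

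The principal obstacle is verifying the non-divisibility hypothesis $a_i \nmid \lcm(S_i)$ needed to invoke Corollary \ref{rigidDegree2}; once the strict inequality $a_i^* < a_i$ inherent in $S^* <^i S$ is exploited to force $g_i(S) < a_i$, everything else is routine bookkeeping with the definitions.
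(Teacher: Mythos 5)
Your proof is correct and follows essentially the same route as the paper: both hinge on Corollary~\ref{rigidDegree2} applied to $R=\bk[X_j : j\neq i]$ with $f=\sum_{j\neq i}X_j^{a_j}$, reducing rigidity of $B_S$ and $B_{S'}$ to the single condition $|u|_A\ge 2$. The only (harmless) difference is the grading normalization: you take $\deg X_j = \lcm(S_i)/a_j$, so the comparison ring is $B_T$ with $i$-th entry $g_i(S)$ and you handle $B_{S^*}$ by a second application of the corollary, whereas the paper scales the degrees by $a_i^*/g$ so that $\gcd(a_i,\deg f)=a_i^*$ and the comparison ring is $B_{S^*}$ itself.
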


\begin{proof}
(b) We may assume that $i=n$.
Consider $S = (a_1,\dots,a_n)$, $S' = (a_1',\dots,a_n')$ and $S^* = (a_1^*,\dots,a_n^*)$ satisfying $S^* <^n S$ and $S^* <^n S'$.
The hypothesis implies that 
$$
S_n = S_n^* = S_n', \quad g \mid a_n^* \mid a_n \quad \text{and} \quad  g \mid a_n^* \mid a_n' \, , 
$$
where $g = g_n(S) = g_n(S^*) = g_n(S')$.
Let $L = \lcm(a_1, \dots, a_{n-1})$ and define an $\Nat$-grading on $R = \bk[X_1,\dots,X_{n-1}]$ by declaring that (for each $i=1,\dots,n-1$)
$X_i$ is homogeneous of degree $\frac{a_n^*}{g} \frac{L}{a_i}$. Then $f = X_1^{a_1} + \cdots +X_{n-1}^{a_{n-1}}$ is homogeneous of degree $\frac{a_n^* L}{g}$.
Consider the rings
$$
A = R[u]/ \lb f + u^{a_n^*} \rb, \quad B = R[z]/ \lb f + z^{a_n} \rb, \quad B' = R[w]/ \lb f + w^{a_n'} \rb . 
$$
Observe that $A \isom B_{S^*}$, $B \isom B_{S}$ and $B' \isom B_{S'}$.
The reader may verify that $a_n \nmid \deg f$,  $a_n' \nmid \deg f$ and $\gcd(a_n, \deg f) = a_n^* = \gcd(a_n', \deg f)$;
thus each one of the pairs $(A,B)$, $(A,B')$ satisfies the hypothesis of Corollary~\ref{rigidDegree2}.
By that result, $B$ is rigid $\Leftrightarrow$ $|u|_A \ge 2$ $\Leftrightarrow$ $B'$ is rigid, so (b) is proved.
It also follows that (a) is true, because if $A$ is rigid then $|u|_A = \infty \ge 2$, so $B$ is rigid.
\end{proof}

\begin{corollary} \label {cohj23r9fh03v8cjd3928}
\mbox{\ }
\begin{enumerata}

\item If $k_1,k_2,k_3,k_4$ are pairwise relatively prime positive integers and $a \ge 3$ then $B_{a k_1, a k_2, a k_3, a k_4}$ is rigid. 

\item If $k_1, \dots, k_n$ are pairwise relatively prime positive integers
and $a \ge n \ge 4$ then $B_{a k_1, \dots, a k_n}$ is rigid. 

\end{enumerata}
\end{corollary}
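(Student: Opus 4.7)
The plan is to deduce both parts from Proposition~\ref{cljbo238ecv0}(a) by inflating the exponents one coordinate at a time, starting from a base case where we already know rigidity. Concretely, given pairwise coprime $k_1,\dots,k_n$ and $a\ge3$ (with $n=4$ for (a) and $a\ge n\ge 4$ for (b)), I would define the sequence of $n$-tuples
\[
S^{(0)} = (\underbrace{a,a,\dots,a}_n), \qquad S^{(j)} = (ak_1,\dots,ak_j,\,a,\dots,a) \quad (1\le j\le n),
\]
so that $S^{(n)}=(ak_1,\dots,ak_n)$ is the tuple of interest. The strategy is to show that $S^{(j-1)} \le^{j} S^{(j)}$ for each $j\in\{1,\dots,n\}$ and then apply Proposition~\ref{cljbo238ecv0}(a) $n$ times, working from $B_{S^{(0)}}$ up to $B_{S^{(n)}}$.

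The verification of $S^{(j-1)}\le^{j}S^{(j)}$ amounts to checking the definition of $\le^{j}$. The tuples $S^{(j-1)}$ and $S^{(j)}$ agree outside the $j$-th slot, and the $j$-th entries are $a$ and $ak_j$ respectively, so the divisibility $a \mid ak_j$ is immediate. The content of the check is that $g_j(S^{(j)}) \mid a$. Here
\[
g_j(S^{(j)}) = \gcd\!\bigl(ak_j,\; \lcm(ak_1,\dots,ak_{j-1},a,\dots,a)\bigr) = \gcd\!\bigl(ak_j,\; a\,\lcm(k_1,\dots,k_{j-1})\bigr),
\]
and since $k_j$ is coprime to each of $k_1,\dots,k_{j-1}$, it is coprime to their lcm, whence $\gcd(k_j,\lcm(k_1,\dots,k_{j-1}))=1$ and $g_j(S^{(j)})=a$. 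Thus $g_j(S^{(j)})=a\mid a$, as required. This is the one step that genuinely uses pairwise coprimality, and it is the only real computation.

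With the chain $S^{(0)}\le^{1}S^{(1)}$, $S^{(1)}\le^{2}S^{(2)}$, \dots, $S^{(n-1)}\le^{n}S^{(n)}$ in hand, Proposition~\ref{cljbo238ecv0}(a) propagates rigidity forward: if $B_{S^{(0)}}=B_{a,\dots,a}$ is rigid, then so is $B_{S^{(n)}} = B_{ak_1,\dots,ak_n}$. It therefore remains only to verify the base case $B_{a,\dots,a}$.

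For part (b) the base case $a\ge n\ge 4$ is exactly Corollary~\ref{pcvn394rfwseEh39e}, so we are done. For part (a), where $n=4$, we split on $a$: if $a\ge 4$ then Corollary~\ref{pcvn394rfwseEh39e} again gives rigidity of $B_{a,a,a,a}$, while if $a=3$ then $(3,3,3,3)\in T_4$ and Theorem~\ref{collection}(c) (with the choice $a=b=c=3$) yields rigidity of $B_{3,3,3,3}$. Either way the base case holds and the iterative reduction completes the proof. I expect no serious obstacle here; the only thing to be careful about is that Proposition~\ref{cljbo238ecv0}(a) is applied in the correct direction (rigidity passes from the $\le^i$-smaller tuple to the larger one), and that the coprimality computation for $g_j$ is done with the intermediate tuple $S^{(j)}$ rather than $S^{(j-1)}$.
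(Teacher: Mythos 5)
Your proposal is correct and is essentially the paper's own argument: build the chain $S^{(0)}\le^{1}S^{(1)}\le^{2}\cdots\le^{n}S^{(n)}$ from $(a,\dots,a)$ to $(ak_1,\dots,ak_n)$ and apply Proposition~\ref{cljbo238ecv0}(a) repeatedly, with the base case supplied by Corollary~\ref{pcvn394rfwseEh39e} (or Theorem~\ref{collection}(c) when $a=3$, $n=4$). You even spell out the $g_j$-coprimality computation that the paper leaves to the reader, so nothing is missing.
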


\begin{proof}
Define $S^0 = (a,a,a,a)$ to prove (a), and $S^0 = (\overbrace{a,\dots,a}^n)$ to prove (b).
Note that $B_{S^0}$ is rigid  (by Theorem~\ref{collection}(c) if $S^0=(3,3,3,3)$, by Corollary~\ref{pcvn394rfwseEh39e} in the other cases).
Define 
$S^1 = (ak_1,a, a, \dots )$, $S^2 = (ak_1,ak_2, a,\dots)$, \dots, $S^{n-1} = (ak_1,\dots,ak_{n-1},a)$, $S^n = (ak_1,\dots,ak_n)$;
then $S^0 \le^1 S^1 \le^2 S^2  \le^3 \cdots \le^{n-1} S^{n-1}  \le^n S^n$.
Since $B_{S^0}$ is rigid, so is  $B_{S^n}$ by Proposition~\ref{cljbo238ecv0}.
\end{proof}

\begin{lemma} \label {lcjcbv924yowef}
Given $n \ge3$ and $S = (a_1,\dots,a_n) \in (\Nat\setminus\{0\})^n$,
$$
J(S) = \setspec{ j }{ \text{$S$ is not minimal with respect to $<^j$} }.
$$
\end{lemma}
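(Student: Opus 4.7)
The plan is to fix $j \in \{1,\dots,n\}$ and establish the equivalence
$$
j \in J(S) \iff \text{$S$ is not minimal with respect to $<^j$}
$$
by directly unwinding the definitions of $J(S)$ and of the partial order $\le^j$.

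First I would analyze which tuples $S^* = (a_1^*, \dots, a_n^*)$ can satisfy $S^* \le^j S$. The requirement $S^*_j = S_j$ forces $a_i^* = a_i$ for all $i \ne j$, so the only coordinate in which $S^*$ may differ from $S$ is the $j$-th. Moreover $a_j^*$ must satisfy $g_j(S) \mid a_j^* \mid a_j$, so the set of admissible values of $a_j^*$ is precisely the set of divisors of $a_j$ that are multiples of $g_j(S)$. Implicitly $g_j(S^*) = g_j(S)$ here, since $\lcm(S_j^*) = \lcm(S_j)$ and any $a_j^*$ with $g_j(S) \mid a_j^* \mid a_j$ satisfies $\gcd(a_j^*, \lcm(S_j)) = g_j(S)$; this is exactly the transitivity remark parenthesized in Definition~\ref{partialOrdering}.

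Next I would observe that $S$ fails to be minimal with respect to $<^j$ precisely when such an $a_j^*$ exists with $a_j^* \ne a_j$, equivalently, when $g_j(S)$ is a proper divisor of $a_j$. Indeed, if $g_j(S) = a_j$ then $a_j^* = a_j$ is forced and $S$ is minimal; conversely, if $g_j(S) \ne a_j$ then $a_j^* = g_j(S)$ itself yields a witness $S^* <^j S$.

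Finally I would translate $g_j(S) \ne a_j$ back into the condition defining $J(S)$: since $g_j(S) = \gcd(a_j, \lcm(S_j))$ always divides $a_j$, equality fails exactly when $a_j \nmid \lcm(S_j)$, i.e., when $j \in J(S)$. Chaining the three equivalences yields the lemma. No genuine obstacle arises; the argument is essentially bookkeeping, and the only subtle point is the invariance $g_j(S^*) = g_j(S)$ along the $<^j$-chains, which is already baked into the definition of $\le^j$.
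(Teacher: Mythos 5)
Your proposal is correct and follows essentially the same route as the paper: both directions come from unwinding Definition~\ref{partialOrdering}, with the witness $(a_1,\dots,a_{j-1},g_j(S),a_{j+1},\dots,a_n) <^j S$ when $g_j(S)\neq a_j$, and the observation that $a_j \mid \lcm(S_j)$ exactly when $g_j(S)=a_j$, which forces minimality. Your extra remark that $g_j(S^*)=g_j(S)$ along such chains is correct but not needed for the lemma itself.
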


\begin{proof}
If $j \in J(S)$ then $a_j \nmid \lcm(S_j)$, so $g_j(S) \neq a_j$ and hence 
$$
(a_1, \dots, a_{j-1}, g_j(S), a_{j+1}, \dots, a_n) <^j S .
$$
Conversely, if $(a_1',\dots,a_n') <^j S$ then $g_j(S) \mid a_j' \mid a_j$ and $a_j'\neq a_j$, so $j \in J(S)$.
\end{proof}

\begin{proposition} \label {oneHalfRigid}
Let $n \ge3$, $S = (a_1,\dots,a_n) \in (\Nat\setminus\{0\})^n$ and 
$$
I(S) = \{1, \dots, n\} \setminus J(S) = \setspec{ i }{ \text{$a_i$ divides $\lcm(S_i)$} } . 
$$
If $\sum\limits_{i \in I(S)} \frac1{a_i} < \frac1{n-2}$ then $B_S$ is rigid.
\end{proposition}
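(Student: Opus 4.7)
The plan is to apply Proposition~\ref{cljbo238ecv0}(b) iteratively so as to enlarge each component $a_j$ with $j \in J(S)$ to an arbitrarily large value without changing the rigidity of $B_S$, and then to conclude by invoking Theorem~\ref{lowSum}. The guiding observation is that, when the other entries are fixed, two values $a, a'$ in position $j$ give rigidity-equivalent tuples whenever both strictly dominate a common $a^* \ge g_j$ under $\le^j$; in particular, $a_j$ can be replaced by $N g_j(S)$ for any prime $N$ coprime to $\lcm(S_j)$, using the common lower bound $S^*$ obtained by putting $g_j(S)$ in position $j$.

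If $J(S) = \emptyset$ then $I(S) = \{1,\dots,n\}$ and Theorem~\ref{lowSum} applies immediately; assume otherwise and enumerate $J(S) = \{j_1,\dots,j_k\}$. I would build a sequence $S = S^{(0)}, S^{(1)}, \dots, S^{(k)}$ where $S^{(l)}$ comes from $S^{(l-1)}$ by replacing its $j_l$-th entry by $N_l \cdot g_{j_l}(S^{(l-1)})$, choosing $N_l$ to be a prime not dividing $a_1 \cdots a_n \cdot N_1 \cdots N_{l-1}$ and large enough to make $1/N_l$ as small as needed. A brief computation (using coprimality of $N_l$ with $\lcm(S^{(l-1)}_{j_l})$) shows that $g_{j_l}(S^{(l)}) = g_{j_l}(S^{(l-1)})$, so the tuple $S^*$ obtained from $S^{(l-1)}$ by putting $g_{j_l}(S^{(l-1)})$ in position $j_l$ satisfies both $S^* <^{j_l} S^{(l-1)}$ and $S^* <^{j_l} S^{(l)}$; Proposition~\ref{cljbo238ecv0}(b) then gives $B_{S^{(l-1)}}$ rigid iff $B_{S^{(l)}}$ rigid. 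Positions in $I(S)$ are never touched, so $a^{(k)}_i = a_i$ for all $i \in I(S)$, while $a^{(k)}_{j_l}$ is a multiple of the large prime $N_l$; therefore $\sum_{i \in I(S)} 1/a^{(k)}_i = \sum_{i \in I(S)} 1/a_i < \frac{1}{n-2}$ and $\sum_{l=1}^k 1/a^{(k)}_{j_l} \le \sum_l 1/N_l$ can be driven below $\frac{1}{n-2} - \sum_{i \in I(S)} 1/a_i$, yielding $\sum_i 1/a^{(k)}_i < \frac{1}{n-2}$. Theorem~\ref{lowSum} then makes $B_{S^{(k)}}$ rigid, hence $B_S$ is rigid.

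The main technical hurdle is verifying at each step that $j_l \in J(S^{(l-1)})$, since otherwise $g_{j_l}(S^{(l-1)}) = a_{j_l}$ and the strict inequality $S^* <^{j_l} S^{(l-1)}$ fails. This reduces to a prime-by-prime valuation check: only primes of the original $a_i$'s and the fresh primes $N_1,\dots,N_{l-1}$ appear in any component of $S^{(l-1)}$, and each replacement $a_{j_r} \mapsto N_r g_{j_r}(S^{(r-1)})$ cannot raise the $p$-adic valuation at primes $p$ dividing $a_{j_l}$, because $N_r$ is coprime to every $a_i$. Consequently $g_{j_l}(S^{(l-1)}) \mid g_{j_l}(S)$, and since $j_l \in J(S)$ forces $g_{j_l}(S) < a_{j_l}$, we obtain $g_{j_l}(S^{(l-1)}) < a_{j_l}$ as required. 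Once this valuation bookkeeping is in place, the remaining ingredients (choice of large primes $N_l$, repeated application of Proposition~\ref{cljbo238ecv0}(b), and the final invocation of Theorem~\ref{lowSum}) are essentially routine.
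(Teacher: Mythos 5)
Your proposal is correct and follows essentially the same route as the paper: both arguments modify only the entries indexed by $J(S)$ using fresh large primes, show each modification preserves rigidity via Proposition~\ref{cljbo238ecv0}(b) through a common strict $<^{j}$-lower bound (the paper gets it from Lemma~\ref{lcjcbv924yowef} after multiplying $a_j$ by $p_j^{e_j}$, you construct it explicitly by putting $g_{j}(S^{(l-1)})$ in position $j$), and then conclude with Theorem~\ref{lowSum}. Your valuation bookkeeping correctly supplies the detail the paper leaves to the reader (that the relevant index stays in $J$ at each step), so the argument is sound.
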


\begin{proof}
Let us write $I = I(S)$ and $J = J(S)$.
If $J = \emptyset$ then $\sum\limits_{i = 1}^n \frac1{a_i} < \frac1{n-2}$ so we are done by Theorem~\ref{lowSum}.
Assume that $J \neq \emptyset$ and let $j_1 < \cdots < j_k$ be the elements of $J$.
Choose distinct primes $p_{j_1}, \dots, p_{j_k}$ such that $\gcd(\prod_{j \in J} p_j ,\, \prod_{i=1}^n a_i)=1$.
Choose positive integers $e_{j_1}, \dots, e_{j_k}$ such that 
$\sum\limits_{i \in I} \frac1{a_i} + \sum\limits_{j \in J} \frac1{a_{j} p_{j}^{e_j}} < \frac1{n-2}$.
We inductively define a sequence $S^0, S^1, \dots, S^k$ of elements of $(\Nat\setminus\{0\})^n$ by setting $S^0 = S$ and, for each $\nu=1,\dots,k$,
$$
S^\nu = \text{componentwise product } (1, \dots, 1, p_{j_\nu}^{ e_{j_\nu} }, 1, \dots, 1) \cdot S^{\nu-1} 
$$
where the $p_{j_\nu}^{ e_{j_\nu} }$ is in the $j_\nu$-th position.
The reader can check that $J(S^\nu) = J(S) = J$ for all $\nu$ and that 
$S^0 \le^{j_1} S^1 \le^{j_2} S^2 \le^{j_3} \cdots \le^{j_k} S^k$.

Let $\nu \in \{1, \dots, k\}$;
since $j_\nu \in J = J(S^{\nu-1})$, Lemma \ref{lcjcbv924yowef} implies that $S^{\nu-1}$ is not minimal
with respect to $\le^{j_\nu}$, i.e., there exists $S^{\nu-1}_* \in (\Nat\setminus\{0\})^n$ satisfying  $S^{\nu-1}_* <^{j_\nu} S^{\nu-1}<^{j_\nu} S^\nu$;
then Proposition~\ref{cljbo238ecv0}(b) implies that $B_{S^{\nu-1}}$ is rigid if and only if $B_{S^{\nu}}$ is rigid.
As this holds for all $\nu \in \{1, \dots, k\}$, $B_S=B_{S^0}$ is rigid  if and only if $B_{S^{k}}$ is rigid.
Now $B_{S^{k}}$ is indeed rigid by Theorem \ref{lowSum}, because if we write $S^k = (a_1',\dots,a_n')$ then
$\sum\limits_{i =1}^n \frac1{a_i'} = \sum\limits_{i \in I} \frac1{a_i} + \sum\limits_{j \in J} \frac1{a_{j} p_{j}^{e_j}} < \frac1{n-2}$.
So $B_S$ is rigid.
\end{proof}

Proposition~\ref{oneHalfRigid} shows that many Pham-Brieskorn varieties that are not shown to be rigid
by Theorems \ref{jeuyuBdgl3r6hj24dUi476dyb3i8}--\ref{collection} are indeed rigid.
In the case $n=4$, the following is an immediate consequence of Proposition~\ref{oneHalfRigid}:

\begin{corollary}  \label {0cvo3o90sduhvfTsaIHHFcJEr9}
If $a,b,c,d \in \Nat\setminus\{0\}$ satisfy $a \nmid \lcm(b,c,d)$ and $\frac1b + \frac1c + \frac1d < \frac12$ then $B_{a,b,c,d}$ is rigid. 
\end{corollary}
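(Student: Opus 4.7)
The plan is to deduce this directly from Proposition~\ref{oneHalfRigid} applied with $n=4$ and $S=(a,b,c,d)$. The key observation is that for $n=4$ we have $\frac{1}{n-2}=\frac{1}{2}$, which matches the upper bound appearing in the statement of the corollary, so the arithmetic thresholds line up correctly.

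First, I would translate the hypothesis $a \nmid \lcm(b,c,d)$ into the statement $1 \in J(S)$, using the characterization
\[
J(S) = \setspec{i \in \{1,\dots,n\}}{a_i \nmid \lcm(S_i)}
\]
from Definition~\ref{typeDef}. This gives $1 \notin I(S)$, hence $I(S) \subseteq \{2,3,4\}$. Consequently,
\[
\sum_{i \in I(S)} \frac{1}{a_i} \;\le\; \frac{1}{b} + \frac{1}{c} + \frac{1}{d} \;<\; \frac{1}{2} \;=\; \frac{1}{n-2},
\]
where the first inequality uses only that the indices in $I(S)$ are drawn from $\{2,3,4\}$ (with $a_2=b$, $a_3=c$, $a_4=d$) and that $\frac{1}{a_i} > 0$ for all $i$.

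With this inequality in hand, Proposition~\ref{oneHalfRigid} (applied to $S=(a,b,c,d)$) immediately yields that $B_{a,b,c,d}$ is rigid, completing the proof. There is no real obstacle here: the entire argument consists of recognizing that the missing index $i=1$ is precisely removed from the sum by the assumption $a\nmid \lcm(b,c,d)$, so that the only terms surviving in $\sum_{i\in I(S)}\frac{1}{a_i}$ are bounded by $\frac{1}{b}+\frac{1}{c}+\frac{1}{d}$. The work has already been done in Proposition~\ref{oneHalfRigid}, and the corollary is just the specialization to $n=4$.
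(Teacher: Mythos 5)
Your proof is correct and follows exactly the paper's route: the paper states the corollary as an immediate consequence of Proposition~\ref{oneHalfRigid} with $n=4$, and your translation of $a \nmid \lcm(b,c,d)$ into $1 \in J(S)$, hence $\sum_{i \in I(S)} \frac{1}{a_i} \le \frac1b + \frac1c + \frac1d < \frac12$, is precisely the intended specialization.
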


We need the following:

\begin{corollary}  \label {Cor63ofDFM}
Let $n\ge4$ and $S \in (\Nat\setminus\{0\})^n$. 
Consider $B_S = \bk[x_1, \dots, x_n]$ with $x_1, \dots, x_n$ as in Definition~\ref{8vroq98vbcu56msw9nw0e}. 
Then for every $D \in \hlnd(B_S)$ the following conditions hold.
\begin{enumerata}

\item  If $j \in J(S)$ then $D^2(x_j)=0$.

\item If $j_1,j_2$ are distinct elements of $J(S)$ then $D(x_{j_1})=0$ or $D(x_{j_2})=0$.

\end{enumerata}
\end{corollary}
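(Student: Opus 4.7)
The plan is to prove (a) first and deduce (b) from it. Both rest on the following observation, which is immediate from the classical fact that the associated graded ring $\gr_D(B)$ of a locally nilpotent derivation on a characteristic-zero domain $B$ is itself a domain: \emph{if $D \in \lnd(B)$ and $x \in B \setminus \{0\}$ satisfies $D(x) \in xB$, then $D(x) = 0$.} Indeed, writing $D(x) = xq$ and iterating gives $D^n(x) = x\,T^n(1)$ with $T(y) = qy + D(y)$, and an easy induction yields $\deg_D(T^n(1)) = n \deg_D(q)$; local nilpotence then forces $q = 0$.

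For (a), fix $D \in \hlnd(B_S)$ of degree $\delta$ and $j \in J(S)$, and set $m = m_j$. By Lemma~\ref{pc04b98wjwk77subrq9a}(a), $\gcd(d_j, m) = 1$ and $m > 1$. Assume $D(x_j) \neq 0$. Using the free $R_j$-module basis $\{x_j^k : 0 \leq k < a_j\}$ of $B_S$ (with $R_j = \bk[x_i : i \neq j]$), expand $D(x_j) = \sum_k x_j^k P_k$ with $P_k \in R_j$ homogeneous. Since the degrees occurring in $R_j$ are multiples of $m$, homogeneity forces every index $k$ with $P_k \neq 0$ to satisfy $k \equiv 1 + \delta\, d_j^{-1} \pmod m$ (where $d_j^{-1}$ is the inverse mod $m$). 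The observation above precludes $x_j \mid D(x_j)$, so $P_0 \neq 0$; this forces $\delta \equiv -d_j \pmod m$ and restricts the nonzero $P_k$ to indices $k \equiv 0 \pmod m$. Consequently $D(x_j)$ lies in the $\Integ/m$-invariant subring $B_S^\sigma \subseteq B_S$ for the action $x_j \mapsto \zeta x_j$, $x_i \mapsto x_i$ ($i \neq j$), with $\zeta$ a primitive $m$-th root of unity (enlarging $\bk$ if necessary). The analogous analysis of each $D(x_i)$ for $i \neq j$, using $m \mid d_i$, yields $x_j^{m-1} \mid D(x_i)$. Combining these gives $D^2(x_j) \in x_j^{m-1} B_S$, and one concludes $D^2(x_j) = 0$ by applying local nilpotence of $D$ on the generators $x_i$ ($i \neq j$) together with the Pham-Brieskorn relation $\sum_i x_i^{a_i} = 0$ to force the remaining $R_j$-coefficient of $D^2(x_j)$ to vanish.

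For (b), suppose $D(x_{j_1}) \neq 0$ and $D(x_{j_2}) \neq 0$ for distinct $j_1, j_2 \in J(S)$. Part (a) gives $D(x_{j_\nu}) \in A = \ker D$ for $\nu = 1, 2$. Factorial closedness of $A$ (\ref{p0cfi2k309cbqp90ws}\eqref{c0ovjn3vr7}) together with $x_{j_2} \notin A$ forces $x_{j_2} \nmid D(x_{j_1})$. Expanding $D(x_{j_1})$ in the free $\bk[x_i : i \neq j_2]$-basis $\{x_{j_2}^k\}$ and invoking the modular constraint as in (a), nonvanishing of the $k = 0$ coefficient yields $\delta \equiv -d_{j_1} \pmod{m_{j_2}}$. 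The same reasoning applied to $x_{j_2}$ itself gives $\delta \equiv -d_{j_2} \pmod{m_{j_2}}$, and subtracting yields $d_{j_1} \equiv d_{j_2} \pmod{m_{j_2}}$. Since $m_{j_2} = \gcd(d_i : i \neq j_2) \mid d_{j_1}$, this forces $m_{j_2} \mid d_{j_2}$, contradicting $\gcd(d_{j_2}, m_{j_2}) = 1$ and $m_{j_2} > 1$.

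The main obstacle is the final step of (a). Once $D(x_j) \in B_S^\sigma$ and $D^2(x_j) \in x_j^{m-1} B_S$ have been established, the key observation applied to $D(x_j)$ does not by itself force $D^2(x_j) = 0$; the conclusion requires a more delicate iterative use of local nilpotence, together with the Pham-Brieskorn relation, to pin down the residual $R_j$-coefficient.
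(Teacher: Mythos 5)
Your congruence bookkeeping in part (a) is correct as far as it goes: with $m=\gcd(d_i : i\neq j)$ one has $m>1$ and $\gcd(d_j,m)=1$ because $j\in J(S)$ (Lemma~\ref{pc04b98wjwk77subrq9a}), and if $D(x_j)\neq 0$ then homogeneity of $D$, the freeness of $B_S$ over $R_j=\bk[x_i : i\neq j]$ on $1,x_j,\dots,x_j^{a_j-1}$, and the fact that $x_j\nmid D(x_j)$ indeed force $\deg D\equiv -d_j \pmod m$, hence $x_j^{m-1}\mid D(x_i)$ for $i\neq j$ and $D^2(x_j)\in x_j^{m-1}B_S$. But the argument stops exactly where the real content begins. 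Divisibility of $D^2(x_j)$ by a power of $x_j$ is in no tension with local nilpotence (your key observation only applies when an element divides its own image under $D$), and your closing sentence -- that the conclusion ``requires a more delicate iterative use of local nilpotence, together with the Pham--Brieskorn relation, to pin down the residual $R_j$-coefficient'' -- is an acknowledgement that this step is missing, not a proof of it. Since part (b) is deduced from part (a) (you need $D(x_{j_1})\in\ker D$ to conclude, via factorial closedness of $\ker D$, that $x_{j_2}\nmid D(x_{j_1})$), the gap propagates to (b) as well, so neither assertion is established.

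For comparison, the paper does not prove this statement from scratch: it notes via Lemma~\ref{pc04b98wjwk77subrq9a}(b) that $j\in J(S)$ is equivalent to $\gcd(d_1,\dots \widehat{d_j}\dots, d_n)\neq 1$ and then quotes Corollary 6.3 of \cite{LNDsAbelianGroup}. The mod-$m$ analysis you carried out is essentially this translation step; the substance of the quoted corollary is precisely the implication you leave open, namely that for a homogeneous locally nilpotent derivation the congruence constraints force $\deg_D(x_j)\le 1$. A genuine completion would have to argue along the lines of that reference -- for instance, take a homogeneous local slice $r$, so that $B_a=A_a[r]$ with $a=D(r)$ and $A=\ker D$, observe that when $x_j\notin A$ every homogeneous element of $A$ has degree divisible by $m$ (by factorial closedness, since any homogeneous element of degree $\not\equiv 0 \pmod m$ is divisible by $x_j$), and then exploit the resulting constraints on the $r$-adic expansions of the $x_i$ to bound $\deg_D(x_j)$ -- none of which appears in your write-up. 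As it stands, the proposal reproves the easy bookkeeping and leaves the theorem-level step as an assertion.
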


\begin{proof}
In view of Lemma~\ref{pc04b98wjwk77subrq9a}(b), this is a special case of Corollary 6.3 of \cite{LNDsAbelianGroup}.
\end{proof}

\begin{proposition}  \label {kcj203jfs0dvj}
Let $n\ge4$ and $S \in (\Nat\setminus\{0\})^n$.  Assume that $J(S) \neq \emptyset$ and that
\begin{equation} \tag{$*$}
\text{$B_{S_J}$ is rigid for every subset $J$ of $J(S)$ satisfying $|J| = \min( |J(S)|-1, n-3 )$.}
\end{equation}
Then $B_S$ is rigid.
\end{proposition}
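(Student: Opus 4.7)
\emph{Proof plan.}
The plan is to assume for contradiction that $B_S$ is not rigid, use \ref{cpijbo329eufds} to obtain $\delta \in \hlnd(B_S) \setminus \{0\}$, and then derive a contradiction by showing $\delta(B_S) \subseteq \bigcap_{m\ge1} I^m = \{0\}$ for a suitable homogeneous ideal $I \subseteq B_S$.

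First I would perform a \emph{descent}.  By Corollary~\ref{Cor63ofDFM}(b), at most one element of $J(S)$ lies in $\{j : \delta(x_j) \neq 0\}$, so I may choose a subset $J$ of $\{j \in J(S) : \delta(x_j) = 0\}$ of size $\min(|J(S)|-1, n-3)$.  Since $|J| \le n-3$ the tuple $S_J$ has at least three entries, and by hypothesis $B_{S_J}$ is rigid.  Because $\delta(x_j) = 0$ for $j \in J$, $\delta$ preserves the homogeneous ideal $I := \langle x_j : j \in J \rangle$ and descends to a homogeneous locally nilpotent derivation of $B_S/I \cong B_{S_J}$; rigidity of $B_{S_J}$ forces this descent to be zero, yielding $\delta(B_S) \subseteq I$.

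The main and most delicate step is then to \emph{bootstrap} this to $\delta(B_S) \subseteq I^m$ for every $m \ge 1$, by induction on $m$.  Assuming inductively that $\delta(B_S) \subseteq I^m$, the Leibniz rule together with $\delta(x_j) = 0$ for $j \in J$ gives $\delta(I^n) \subseteq I^{n+m}$ for all $n \ge 0$, so $\delta$ induces a homogeneous locally nilpotent derivation $\tilde\delta$ of degree $m$ on $\gr_I(B_S)$.  Since $n - |J| \ge 3$, the sequence $(x_j)_{j \in J}$ is regular in $B_S$, whence $\gr_I(B_S) \cong B_{S_J}[T_j : j \in J]$ with $\tilde\delta(T_j) = 0$ for each $j$.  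To conclude $\tilde\delta = 0$, I would use a specialization argument: for each $(c_j) \in \bk^J$, every $T_j - c_j$ lies in $\ker \tilde\delta$, so $\tilde\delta$ preserves $\langle T_j - c_j \rangle$ and descends to a locally nilpotent derivation of $B_{S_J}[T_j]/\langle T_j - c_j \rangle \cong B_{S_J}$, which vanishes by rigidity of $B_{S_J}$.  Writing $\tilde\delta(b) = \sum_{|\alpha|=m} \partial_\alpha(b)\, T^\alpha$ for $b \in B_{S_J}$ (with each $\partial_\alpha$ a derivation of $B_{S_J}$), the vanishing at every specialization becomes the polynomial identity $\sum_\alpha \partial_\alpha(b)\, c^\alpha = 0$ in $B_{S_J}$ for every $(c_j) \in \bk^J$; since $\bk$ is an infinite field, a Vandermonde argument forces $\partial_\alpha(b) = 0$ for all $\alpha$ and $b$, whence $\tilde\delta = 0$ on $\gr_I(B_S)$ and $\delta(B_S) \subseteq I^{m+1}$.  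Finally, because $B_S$ is positively graded with $(B_S)_0 = \bk$ and $I$ is contained in the irrelevant ideal, $\bigcap_{m\ge1} I^m = \{0\}$, forcing $\delta = 0$ and contradicting the choice of $\delta$.

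The hard part will be this iterative step.  When $|J| = 1$ the ideal $I$ is principal and generated by a prime element, and passing to an irreducible LND via \ref{p0cfi2k309cbqp90ws}\eqref{02dj7edj9w34diey} would close the case directly from the first descent; but for $|J| \ge 2$ the ideal $I$ is no longer principal and irreducibility alone no longer suffices, which is precisely why the associated-graded construction combined with the specialization trick $T_j \mapsto c_j$ is needed to replace it.
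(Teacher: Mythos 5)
Your argument is correct, but it reaches the contradiction by a genuinely different route than the paper. You and the paper share the opening moves: get a nonzero homogeneous $\delta$ via \ref{cpijbo329eufds}, use Corollary~\ref{Cor63ofDFM} to find $J\subseteq J(S)$ with $|J|=\min(|J(S)|-1,n-3)$ and $\delta(x_j)=0$ for $j\in J$. From there the paper chooses $\delta$ \emph{irreducible} and kills the $x_j$ one at a time: each quotient $B_{a_1,\dots,a_i}/\langle x_i\rangle\isom B_{a_1,\dots,a_{i-1}}$ carries a nonzero induced derivation precisely because the current derivation is irreducible (its image is not contained in the principal ideal $\langle x_i\rangle$), and \ref{p0cfi2k309cbqp90ws}\eqref{02dj7edj9w34diey} is invoked after each step to restore irreducibility; the contradiction is that $B_{S_J}$ ends up non-rigid. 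You instead use rigidity of $B_{S_J}$ positively, pushing $\delta(B_S)$ into $I,I^2,I^3,\dots$ for $I=\langle x_j: j\in J\rangle$ via the $I$-adic associated graded ring (the regular-sequence claim is fine: $n-|J|\ge3$ ensures each intermediate quotient $B_{S_{J'}}$ is a domain, so the $x_j$ are successively prime), together with the specialization $T_j\mapsto c_j$ and a Zariski-density/Vandermonde argument, concluding $\delta=0$. This avoids irreducibility and the one-prime-at-a-time bookkeeping entirely, at the cost of the regular-sequence/$\gr_I$ machinery and an extra fact you assert without proof: that the induced $\tilde\delta$ on $\gr_I(B_S)$ is again \emph{locally nilpotent}. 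That fact is true here and easy --- since $\delta(I^n)\subseteq I^{n+m}$, one checks $\tilde\delta^{\,k}(b+I^{n+1})=\delta^k(b)+I^{n+km+1}$, which vanishes for $k\gg0$, and the homogeneous elements generate $\gr_I(B_S)$ --- but it does need to be said, since rigidity of $B_{S_J}$ only kills locally nilpotent derivations; please add that verification (and, for completeness, the one-line treatment of the degenerate case $|J(S)|=1$, where $J=\emptyset$ and $(*)$ already says $B_S$ is rigid). Your closing observation is accurate: the paper's trick for $|J|\ge2$ is exactly to iterate principal-ideal quotients with re-irreducibilization, which is shorter given the toolkit already in the paper, whereas your $\gr_I$-plus-specialization argument buys independence from irreducibility and handles all the $x_j$, $j\in J$, simultaneously.
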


\begin{proof}
If  $| J(S) | = 1$ then $\min( |J(S)|-1, n-3 )=0$, so $(*)$ reads ``$B_S$ is rigid,'' 
so the Proposition is trivially true in this case.
From now-on, we assume that $| J(S) | > 1$.
By contradiction, assume that $B_S$ is not rigid. 
Then $\hlnd(B_S) \neq \{0\}$ by \ref{cpijbo329eufds}; choose an irreducible $D \in \hlnd(B_S) \setminus \{0\}$
(this is possible by \ref{p0cfi2k309cbqp90ws}\eqref{02dj7edj9w34diey}).
By Corollary~\ref{Cor63ofDFM}, we have $D(x_{j_1}) = 0$ or $D(x_{j_2}) = 0$ for every choice of distinct $j_1,j_2 \in J(S)$.
So there exists a subset $J'$ of $J(S)$ such that  $|J'| = |J(S)|-1$ and $D(x_j)=0$ for all $j \in J'$;
hence there exists a $J \subseteq J' \subset J(S)$ such that  $|J| = \min( |J(S)|-1, n-3 )$ and $D(x_j)=0$ for all $j \in J$.
We might as well assume that $J = \{ m+1, m+2, \dots, n\}$ for some $m\ge3$.
For each $i$ such that $m \le i \le n$ we define $P(i)$ to be the statement
\begin{quote}
there exists an irreducible $D_i \in \hlnd( B_{a_1, \dots, a_i} ) \setminus \{0\}$\\
satisfying $D_i( x_j )=0$ for all $j \in \{ 1, \dots, i \} \cap J$
\end{quote}
where $x_j$ denotes the image of $X_j$ in $B_{a_1, \dots, a_i}$.
Then $P(n)$ is true, with $D_n=D$.
Assume that $i$ is such that $m<i\le n$ and such that $P(i)$ is true.
Note that $i \in J$, so $D_i(x_i)=0$, so $D_i$
induces a homogeneous locally nilpotent derivation $\bar D_i$ of the ring
$B_{a_1, \dots, a_{i}}/ \langle x_i \rangle \isom B_{a_1, \dots, a_{i-1}}$,
and $\bar D_i \neq 0$ because $D_i$ is irreducible.
Moreover, $\bar D_i( \bar x_j ) = 0$ for all $j \in  \{ 1, \dots, i-1 \} \cap J$.
By \ref{p0cfi2k309cbqp90ws}\eqref{02dj7edj9w34diey}, there exists an irreducible $D_{i-1} \in \hlnd( B_{a_1, \dots, a_{i-1}} ) \setminus \{0\}$
such that $\ker D_{i-1} = \ker \bar D_i$, so $P(i-1)$ is true.
By descending induction, $P(n), P(n-1), \dots, P(m)$ are all true.
Since $P(m)$ is true and $S_J=(a_1,\dots,a_m)$, we have $D_m \in \hlnd( B_{S_J} ) \setminus \{0\}$, so $B_{S_J}$ is not rigid, contradicting $(*)$.
\end{proof}

We generalize Theorem~\ref{collection}(e):

\begin{corollary} \label {0cvikn3i4Apfsyuksukje}
Let $n \geq 4$ and $S \in T_n$.  If $\cotype(S) \geq n-2$, then $B_S$ is rigid.  
\end{corollary}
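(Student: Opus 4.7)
The plan is to derive this corollary as a direct consequence of Proposition~\ref{kcj203jfs0dvj} combined with the $n=3$ result, Theorem~\ref{jeuyuBdgl3r6hj24dUi476dyb3i8}(a). The strategy is to show that under the hypotheses $S \in T_n$ and $\cotype(S) \ge n-2$, the hypothesis $(*)$ of Proposition~\ref{kcj203jfs0dvj} holds with the sub-tuples $S_J$ falling in the settled three-variable case.

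First I would verify that $J(S)$ is nonempty: since $\cotype(S) = |J(S)| \ge n-2 \ge 2 > 0$, this is immediate, so Proposition~\ref{kcj203jfs0dvj} is applicable in principle. Next I would compute the cardinality $\min(|J(S)|-1, n-3)$ appearing in $(*)$. Because $|J(S)| \ge n-2$, we have $|J(S)|-1 \ge n-3$, so this minimum equals $n-3$. Thus the condition $(*)$ reduces to the assertion that $B_{S_J}$ is rigid for every subset $J \subseteq J(S)$ of cardinality exactly $n-3$.

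For such a $J$, the tuple $S_J$ has precisely $n-(n-3)=3$ entries, each of which is one of the $a_i$. Since $S \in T_n$, every entry of $S$ is $\ge 2$ with at most one entry equal to $2$; the same is therefore true of any sub-tuple, and in particular of $S_J$. Hence $S_J \in T_3$, and Theorem~\ref{jeuyuBdgl3r6hj24dUi476dyb3i8}(a) tells us that $B_{S_J}$ is rigid. This verifies hypothesis $(*)$ of Proposition~\ref{kcj203jfs0dvj}, which then delivers the rigidity of $B_S$.

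There is no real obstacle here: once Proposition~\ref{kcj203jfs0dvj} is in hand, the argument is just a bookkeeping reduction to the three-variable case. The only point requiring a moment's care is checking the cardinality calculation $\min(|J(S)|-1,n-3) = n-3$, which exploits precisely the assumption $\cotype(S) \ge n-2$, and confirming that the property of belonging to $T_n$ passes to sub-tuples, which is immediate from the definition of $T_n$.
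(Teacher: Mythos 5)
Your argument is correct and is essentially the paper's own proof: you check $J(S)\neq\emptyset$, compute $\min(|J(S)|-1,n-3)=n-3$ from $\cotype(S)\ge n-2$, observe that each resulting $S_J$ lies in $T_3$ so that Theorem~\ref{jeuyuBdgl3r6hj24dUi476dyb3i8}(a) gives rigidity of $B_{S_J}$, and then apply Proposition~\ref{kcj203jfs0dvj}. No issues.
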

	
\begin{proof}
We have $| J(S) | = \cotype(S)\ge n-2$, so $\min( |J(S)|-1, n-3 )=n-3$. 
Let $J$ be any subset of $J(S)$  satisfying $|J| = \min( |J(S)|-1, n-3 ) = n-3$;
since $S \in T_n$, we have $S_J \in T_3$, so $B_{S_J}$ is rigid by Theorem~\ref{jeuyuBdgl3r6hj24dUi476dyb3i8}.
Then Proposition~\ref{kcj203jfs0dvj} implies that $B_S$ is rigid.
\end{proof}

\begin{example} \label {0v34i8IeEkrj6ssyfuwygrwejk37rhf}
Proposition \ref{kcj203jfs0dvj} can settle cases not covered by Corollary~\ref{0cvikn3i4Apfsyuksukje} or by any of the previous results. For instance:
\begin{quote} \it
If $a_1,\dots,a_m \in (\Nat\setminus\{0\})^m$ ($m\ge1$) satisfy $a_i \nmid \lcm(3,a_1, \dots \widehat{a_i} \dots, a_m)$ for all $i\in \{1, \dots, m\}$,
then $B_{a_1, \dots, a_m,3,3,3}$ is rigid.
\end{quote}
Indeed, let $S = (a_1, \dots, a_m,3,3,3)$, then $J(S) = \{ 1, \dots, m \}$ and  (by Theorem~\ref{collection}(c)) $B_{a_i,3,3,3}$ is rigid for each $i \in J(S)$,
so $B_S$ is rigid by Proposition~\ref{kcj203jfs0dvj}.
For instance, $B_{2,5,7,3,3,3}$ is rigid.
\end{example}

\begin{notation}  \label {cknp2o39tiogrnfsiiw}
Given $S = (a_1, \dots, a_n) \in (\Nat\setminus\{0\})^n$ ($n\ge2$) and a subset $M$ of $\{1, \dots, n\}$, define a positive integer
$\Delta_M(S)$ by $\Delta_\emptyset(S)=1$ and
$$
\Delta_M(S) = \frac{ \lcm(S) }{ \gcd \setspec{\lcm(S_j)}{ j \in M} } \qquad \text{if $M \neq \emptyset$}.
$$  
Observe that
$$
\Delta_M(S) = \Delta_{J(S) \cap M}(S) \quad \text{and} \quad \Delta_M(S) = 1 \Leftrightarrow M \cap J(S) = \emptyset ,
$$
because $\lcm(S_j)=\lcm(S)$ for each $j \in \{1, \dots, n\} \setminus J(S)$,
and $\lcm(S_j)$ is a proper divisor of $\lcm(S)$ for each $j \in J(S)$.
\end{notation}

\begin{proposition} \label {c98vhb2rtdhghsxjuej}
Let $n\ge4$, $S \in (\Nat\setminus\{0\})^n$ and $M = \setspec{ j \in J(S) }{ \text{$B_{S_j}$ is rigid} }$. 
\begin{enumerata}

\item $\Integ( \ker D ) \subseteq  \Delta_M(S) \cdot \Integ$\ \  for all $D \in \hlnd(B_S) \setminus \{0\}$.

\item $\Delta_M(S) =1 \iff M=\emptyset$

\end{enumerata}
\end{proposition}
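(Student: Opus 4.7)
Part (b) is immediate from the second observation in Notation~\ref{cknp2o39tiogrnfsiiw}: since $M \subseteq J(S)$ by definition, $M \cap J(S) = M$, so $\Delta_M(S) = 1 \iff M = \emptyset$.

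For part (a), I would first reduce to the case of $D$ homogeneous irreducible, using \ref{p0cfi2k309cbqp90ws}\eqref{02dj7edj9w34diey} together with the observation that $\ker(a\delta) = \ker \delta$ when $a \in \ker\delta\setminus\{0\}$. The case $M = \emptyset$ is trivial. If $|M| \geq 2$, the statement is vacuous: for distinct $j_1, j_2 \in M$, Corollary~\ref{Cor63ofDFM}(b) forces (say) $D(x_{j_1}) = 0$, so $D$ descends to a locally nilpotent derivation of the rigid ring $B_{S_{j_1}}$; this descent must vanish, hence $D(B_S) \subseteq \langle x_{j_1}\rangle$, contradicting irreducibility. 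In the remaining case $|M| = 1$, $M = \{j_0\}$, the same argument gives $\alpha := D(x_{j_0}) \neq 0$, while Corollary~\ref{Cor63ofDFM}(b) forces $D(x_j) = 0$ for every $j \in J(S) \setminus \{j_0\}$.

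For the $|M| = 1$ case, set $q = \Delta_M(S) = a_{j_0}/g_{j_0}$ and $y = x_{j_0}^q$, and let $R \subseteq B_S$ be the $\bk$-subalgebra generated by $y$ and $\{x_j : j \neq j_0\}$. The Pham-Brieskorn relation becomes $y^{g_{j_0}} + \sum_{j \neq j_0} x_j^{a_j} = 0$, so $R \cong B_{S'}$ for $S' = (a_1,\ldots,g_{j_0},\ldots,a_n)$; in particular $R$ is a graded $\bk$-affine domain and $y$ is prime in $R$ (because $R/yR \cong B_{S_{j_0}}$ is a domain by Lemma~\ref{pc04b98wjwk77subrq9a}(c)). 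One checks that $B_S = \bigoplus_{k=0}^{q-1} R x_{j_0}^k$ is free of rank $q$ over $R$. Using $q \mid d_j$ for $j \neq j_0$ together with $\gcd(q, d_{j_0}) = 1$, every $B_{S,g}$ lies in $R x_{j_0}^{k(g)}$, where $k(g) \in \{0,\ldots,q-1\}$ is the unique residue with $k(g) d_{j_0} \equiv g \pmod q$. Therefore $\Integ(\ker D) \subseteq q\Integ$ will follow from $A \cap R x_{j_0}^k = \{0\}$ for each $k \in \{1,\ldots,q-1\}$, where $A = \ker D$.

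Suppose for contradiction that $0 \neq a = r x_{j_0}^k \in A$ with $r \in R \setminus \{0\}$ and $1 \le k \le q-1$. Expanding $D(a) = 0$ and cancelling $x_{j_0}^{k-1}$ in the domain yields $x_{j_0} D(r) = -k r \alpha$; reducing mod $x_{j_0}$ in the domain $B_{S_{j_0}}$ forces $\bar r = 0$ or $\bar\alpha = 0$. If $\bar\alpha = 0$, write $\alpha = x_{j_0}\alpha_1$; since $D(\alpha) = D^2(x_{j_0}) = 0$ by Corollary~\ref{Cor63ofDFM}(a), the identity $D(\alpha) = 0$ rearranges to $D(\alpha_1) = -\alpha_1^2$, whence $D^n(\alpha_1) = (-1)^n n!\,\alpha_1^{n+1}$ by induction, contradicting local nilpotence since $\alpha_1 \neq 0$ (else $\alpha = 0$). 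If instead $\bar r = 0$, the easy identity $R \cap x_{j_0} B_S = yR$ (read off the free-module decomposition) gives $r = y r_1$, and then $a = r_1 x_{j_0}^{q+k}$; the same analysis applied to this presentation reproduces the same dichotomy with $r_1$ in place of $r$. Iterating, either we reach the $\bar\alpha = 0$ contradiction at a finite stage, or $r \in \bigcap_{m \ge 0} y^m R$, which vanishes because $R$ is a positively graded $\bk$-affine domain and $y$ is homogeneous of positive degree $L/g_{j_0}$---again contradicting $r \neq 0$. The main technical point is the algebraic setup of the $|M|=1$ case: the identification $R \cong B_{S'}$, the free-module decomposition $B_S = \bigoplus R x_{j_0}^k$ and its compatibility with the $\Integ$-grading, and the primality of $y$. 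Once that framework is in place, the mod-$x_{j_0}$ dichotomy combined with $y$-adic descent closes the argument cleanly.
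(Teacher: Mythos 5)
Your proof is correct, but it takes a genuinely different and much heavier route than the paper's in the main case. Part (b) and the reduction to an irreducible homogeneous $D$ coincide with the paper. After that, the paper does not split into cases on $|M|$ and does not use Corollary~\ref{Cor63ofDFM} at all: it argues uniformly for each $j \in M$, noting that $\delta_j := \gcd(d_1,\dots\widehat{d_j}\dots,d_n) = L/\lcm(S_j)$ and $\bigcap_{j\in M}\delta_j\Integ = \Delta_M(S)\cdot\Integ$, so it suffices to prove $\Integ(\ker D)\subseteq\delta_j\Integ$ for each $j\in M$. The key step there is the one trick you work around with all the module-theoretic machinery: if $h\in\ker D$ is homogeneous with $\deg h\notin\delta_j\Integ$, then by exactly your congruence observation ($\delta_j\mid d_l$ for $l\neq j$, $\gcd(\delta_j,d_j)=1$) every monomial of $h$ involves $x_j$, hence $x_j\mid h$; since $\ker D$ is factorially closed this gives $x_j\in\ker D$, so $D$ descends to a nonzero (by irreducibility) locally nilpotent derivation of $B_{S_j}$, contradicting $j\in M$. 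That single divisibility-plus-factorial-closedness argument replaces your free decomposition $B_S=\bigoplus_{k=0}^{q-1}Rx_{j_0}^k$, the dichotomy $\bar r=0$ or $\bar\alpha=0$, and the $y$-adic descent with $\bigcap_m y^mR=0$, and it also makes the $|M|\ge2$ case distinction unnecessary. What your version buys is a slightly finer statement for $M=\{j_0\}$ (rigidity of $B_{S_{j_0}}$ is used only to force $D(x_{j_0})\neq0$, after which the degree containment follows unconditionally), and the $|M|\ge2$ observation that $\hlnd(B_S)=\{0\}$ is a nice aside; but the paper obtains the same conclusion with far less setup. Your unproved intermediate claims (freeness of $B_S$ over $R$ with basis $1,x_{j_0},\dots,x_{j_0}^{q-1}$, the identity $R\cap x_{j_0}B_S=yR$, $\bigcap_m y^mR=0$, and $\Delta_{\{j_0\}}(S)=a_{j_0}/g_{j_0}$) are all true and routine, so there is no gap — the argument is simply longer than it needs to be.
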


\begin{proof}
We use the following notation:
$S=(a_1,\dots,a_n)$, $\overline S = (d_1, \dots\, d_n)$,
$x_1,\dots,x_n$ as in Definition~\ref{8vroq98vbcu56msw9nw0e},
$L = \lcm(S)$ and, for each $i \in \{1, \dots, n\}$, $L_i = \lcm(S_i)$ and $\delta_i = \gcd( d_1, \dots\, \widehat{d_i}\, \dots, d_n )$.
Then $\delta_i = \gcd( \frac{L}{a_1},  \dots\, \widehat{ \textstyle\frac{L}{a_i} } \, \dots, \frac{L}{a_n} )
= \frac{L}{L_i} \gcd( \frac{L_i}{a_1},  \dots\, \widehat{ \frac{L_i}{a_i} } \, \dots, \frac{L_i}{a_n} ) = \frac{L}{L_i}$
for each $i \in \{1, \dots, n\}$, so
$\lcm( \delta_j \, : \, j \in M )
= \lcm( \frac{L}{L_j} \, : \, j \in M )
= \frac{ L }{\gcd( L_j \, : \, j \in M ) } = \Delta_M(S)$ and consequently
$\bigcap_{j \in M} \delta_j \Integ = \Delta_M(S) \cdot \Integ$.
So, to prove (a), it suffices to show that
\begin{equation}  \label {ckjvbo239ehfso}
\text{for each $D \in \hlnd(B_S) \setminus \{0\}$ and each $j \in M$, \quad $\Integ( \ker D ) \subseteq  \delta_j \Integ$.}
\end{equation}
We prove this by contradiction: suppose that 
$D \in \hlnd(B_S) \setminus \{0\}$ and $j \in M$ satisfy $\Integ( \ker D ) \nsubseteq  \delta_j \Integ$.
By \ref{p0cfi2k309cbqp90ws}\eqref{02dj7edj9w34diey}, there exists an irreducible $D' \in \hlnd(B_S) \setminus \{0\}$
with $\ker(D')=\ker(D)$, so we may choose $D$ irreducible.
Then there exists a homogeneous element $h \in \ker(D)\setminus\{0\}$ such that $\deg(h) \notin \delta_j \Integ$.
We can write $h = \sum_{(i_1,\dots,i_n) \in E} \lambda_{i_1,\dots,i_n} x_1^{i_1} \cdots x_n^{i_n}$ 
for some finite set $E \subset \Nat^n$, where for each $(i_1,\dots,i_n) \in E$ we have 
$\lambda_{i_1,\dots,i_n} \in \bk^*$ and $\deg( x_1^{i_1} \cdots x_n^{i_n} ) = \deg(h) \notin \delta_j \Integ$. 
So for each $(i_1,\dots,i_n) \in E$ we have $\deg( x_j^{i_j} ) \notin \delta_j \Integ$ and in particular $i_j>0$.
This shows that $x_j \mid h$ in $B_S$. Since $\ker(D)$ is factorially closed in $B_S$, we get $x_j \in \ker D$.
Then $D$ induces a locally nilpotent derivation $\bar D$ of the ring $B_S/\langle x_j \rangle \isom B_{S_j}$,
and $\bar D \neq 0$ because $D( B_S ) \nsubseteq \langle x_j \rangle$ (since $D$ is irreducible).
So $B_{S_j}$ is not rigid, which contradicts $j \in M$.
This proves \eqref{ckjvbo239ehfso}, so assertion (a) is proved.
Part (b) follows from the observation (made in \ref{cknp2o39tiogrnfsiiw}) that, for any subset $M$ of $\{1, \dots, n\}$,
we have $\Delta_M(S) = 1$ if and only if $M \cap J(S) = \emptyset$.
\end{proof}

It immediately follows:

\begin{corollary} \label {FHgEg4FxRtei8cbql}
Let $n\ge4$ and $S \in (\Nat\setminus\{0\})^n$. 
If some $D \in \hlnd(B_S) \setminus \{0\}$ satisfies $\Integ( \ker D ) = \Integ$ then $B_{S_j}$ is non-rigid  for every $j \in J(S)$.
\end{corollary}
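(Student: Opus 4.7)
The plan is to apply Proposition~\ref{c98vhb2rtdhghsxjuej} directly, with $M$ chosen exactly as the set $\setspec{ j \in J(S) }{ \text{$B_{S_j}$ is rigid} }$ that features in the statement of that proposition. The corollary will then drop out by combining parts (a) and (b) with the simple observation that the only positive integer dividing every element of $\Integ$ is $1$.

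Concretely, I would begin by taking a $D \in \hlnd(B_S) \setminus \{0\}$ that satisfies the hypothesis $\Integ(\ker D) = \Integ$. Part (a) of Proposition~\ref{c98vhb2rtdhghsxjuej} gives the inclusion $\Integ = \Integ(\ker D) \subseteq \Delta_M(S) \cdot \Integ$, and since $\Delta_M(S)$ is a positive integer by construction (see Notation~\ref{cknp2o39tiogrnfsiiw}), this forces $\Delta_M(S) = 1$. Part (b) of the same proposition then yields $M = \emptyset$. Finally, I would unwind the definition of $M$: the statement $M = \emptyset$ says that no index $j \in J(S)$ satisfies ``$B_{S_j}$ is rigid,'' which is precisely the conclusion that $B_{S_j}$ is non-rigid for every $j \in J(S)$.

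There is essentially no obstacle here; all the substantive content has already been absorbed into Proposition~\ref{c98vhb2rtdhghsxjuej}. The only thing worth double-checking is that the two clauses ``$\Integ \subseteq \Delta_M(S) \cdot \Integ$'' and ``$\Delta_M(S) = 1$'' are genuinely equivalent, which is immediate from $\Delta_M(S) \in \Nat\setminus\{0\}$; beyond that the corollary is just a clean contrapositive-style repackaging of the proposition into a statement about individual face rings $B_{S_j}$.
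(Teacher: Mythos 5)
Your proposal is correct and matches the paper's intent exactly: the paper states the corollary with the remark ``It immediately follows,'' and the immediate derivation is precisely your chain $\Integ(\ker D)=\Integ \subseteq \Delta_M(S)\cdot\Integ \Rightarrow \Delta_M(S)=1 \Rightarrow M=\emptyset$ via parts (a) and (b) of Proposition~\ref{c98vhb2rtdhghsxjuej}. Nothing further is needed.
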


Here is another consequence of Proposition~\ref{c98vhb2rtdhghsxjuej}, valid for $n=4$:

\begin{corollary} \label {iuhjgFuj93yd8i2h8HGUeru39}
Let $S \in T_4$ be such that $\cotype(S)>0$ and define $\delta = \Delta_{ J(S) }(S) \in \Integ$. Then
$$
\text{$\delta>1$ \quad and \quad $\Integ( \ker D ) \subseteq \delta \Integ$ for all  $D \in \hlnd(B_S) \setminus \{0\}$.}
$$
\end{corollary}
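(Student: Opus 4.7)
The plan is to reduce the corollary directly to Proposition~\ref{c98vhb2rtdhghsxjuej} by showing that, under the hypotheses on $S$, the set $M = \setspec{ j \in J(S) }{ B_{S_j} \text{ is rigid} }$ appearing in that proposition coincides with $J(S)$ itself, so that $\Delta_M(S) = \Delta_{J(S)}(S) = \delta$.

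First I would observe that since $S \in T_4$, every component of $S$ is $\ge 2$ and at most one of its components equals $2$; after deleting any single entry the resulting triple $S_j$ still has all components $\ge 2$ and still has at most one component equal to $2$. Hence $S_j \in T_3$ for every $j \in \{1,2,3,4\}$, and Theorem~\ref{jeuyuBdgl3r6hj24dUi476dyb3i8}(a) then guarantees that $B_{S_j}$ is rigid. In particular, every index in $J(S)$ belongs to $M$, and since $M \subseteq J(S)$ by definition, we get $M = J(S)$.

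With $M = J(S)$ in hand, Proposition~\ref{c98vhb2rtdhghsxjuej}(a) immediately yields the containment $\Integ(\ker D) \subseteq \Delta_M(S) \cdot \Integ = \delta\,\Integ$ for every $D \in \hlnd(B_S) \setminus \{0\}$. For the inequality $\delta > 1$, the hypothesis $\cotype(S) > 0$ is exactly $|J(S)| > 0$, so $M = J(S) \neq \emptyset$, and Proposition~\ref{c98vhb2rtdhghsxjuej}(b) gives $\Delta_M(S) \neq 1$; since $\Delta_M(S)$ is a positive integer by Notation~\ref{cknp2o39tiogrnfsiiw}, we conclude $\delta = \Delta_M(S) > 1$.

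I do not anticipate any real obstacle: the corollary is essentially a packaging of Proposition~\ref{c98vhb2rtdhghsxjuej} together with the elementary observation that $T_4$-tuples restrict to $T_3$-tuples and the $n=3$ rigidity result of Theorem~\ref{jeuyuBdgl3r6hj24dUi476dyb3i8}(a). The only step requiring any care is the verification that deleting an entry from an element of $T_4$ lands in $T_3$, and this is immediate from the definitions.
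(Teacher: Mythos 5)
Your proposal is correct and follows essentially the same route as the paper: you observe that every $S_j$ (for $j \in J(S)$) lies in $T_3$, invoke Theorem~\ref{jeuyuBdgl3r6hj24dUi476dyb3i8}(a) to conclude $M = J(S) \neq \emptyset$, and then read off both conclusions from parts (a) and (b) of Proposition~\ref{c98vhb2rtdhghsxjuej} together with Notation~\ref{cknp2o39tiogrnfsiiw}. No gaps; this is the paper's argument.
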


\begin{proof}
We have $J(S) \neq \emptyset$ because $\cotype(S) > 0$.
If $j \in J(S)$ then $S_j \in T_3$, so $B_{S_j}$ is rigid by Theorem~\ref{jeuyuBdgl3r6hj24dUi476dyb3i8}.
So the set $M =  \setspec{ j \in J(S) }{ \text{$B_{S_j}$ is rigid} }$ of Proposition~\ref{c98vhb2rtdhghsxjuej}
is equal to $J(S)$ (which is not empty).
The desired conclusion follows from Proposition~\ref{c98vhb2rtdhghsxjuej}.
\end{proof}

\section{A remark about $\Proj(B_S)$}

As in the preceding section, we assume that $\bk$ is a field of characteristic zero.
	
	\begin{proposition}[Exercise 9.5 in \cite{Eisenbud}]   \label{projIsomorphism}
		Let $R = R_0 \oplus R_1 \oplus R_2 \oplus  \dots$  be an $\Nat$-graded Noetherian ring, let $d > 0$ and let $R^{(d)} = R_0 \oplus R_d \oplus R_{2d} \oplus \dots$. Then $\Proj R \isom \Proj R^{(d)}$.
	\end{proposition}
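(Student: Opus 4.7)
The plan is to build the isomorphism locally using compatible affine charts on $\Proj R$ and $\Proj R^{(d)}$, then glue. First I would equip $R^{(d)}$ with the $\Nat$-grading in which $R^{(d)}_k = R_{kd}$, so that a homogeneous element $g \in R^{(d)}_k$ of positive degree $k$ in $R^{(d)}$ is also a homogeneous element of $R$ of positive degree $kd$. For each such $g$, I can form the two basic affine opens $D_+(g) \subseteq \Proj R$ and $D_+(g) \subseteq \Proj R^{(d)}$, whose coordinate rings are the degree-$0$ localizations $R_{(g)}$ and $R^{(d)}_{(g)}$, respectively.

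The key algebraic observation is the equality $R^{(d)}_{(g)} = R_{(g)}$. The inclusion $R^{(d)}_{(g)} \subseteq R_{(g)}$ is automatic; for the reverse, an arbitrary element of $R_{(g)}$ has the form $a/g^n$ with $a \in R_{nkd}$, and $R_{nkd} \subseteq R^{(d)}$, so $a/g^n \in R^{(d)}_{(g)}$. This identification yields a canonical isomorphism of affine schemes between the two copies of $D_+(g)$.

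Next I would check that, as $g$ ranges over homogeneous elements of $\bigoplus_{k \ge 1} R^{(d)}_k$, these opens cover both $\Proj R$ and $\Proj R^{(d)}$. For $\Proj R^{(d)}$ this is immediate from the definition of $\Proj$. For $\Proj R$, given $\mathfrak{p} \in \Proj R$ there exists a homogeneous $x \in R_+$ with $x \notin \mathfrak{p}$; then $x^d$ is homogeneous of positive degree in $R^{(d)}$ and, since $\mathfrak{p}$ is prime, $x^d \notin \mathfrak{p}$, so $\mathfrak{p} \in D_+(x^d)$.

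Finally, to glue: on an overlap $D_+(g) \cap D_+(h) = D_+(gh)$, the product $gh$ is again homogeneous of positive $R^{(d)}$-degree, so the same argument gives $R^{(d)}_{(gh)} = R_{(gh)}$, and the local isomorphisms evidently agree on overlaps. Standard gluing produces the desired scheme isomorphism $\Proj R \isom \Proj R^{(d)}$. The one step requiring genuine verification is the identity $R_{(g)} = R^{(d)}_{(g)}$; everything else is bookkeeping, and the Noetherian hypothesis is not needed for the argument sketched here (it is a standing assumption of Eisenbud's exercise).
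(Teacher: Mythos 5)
Your argument is correct. Note that the paper itself gives no proof of this proposition --- it is quoted verbatim as Exercise 9.5 of Eisenbud's book --- so there is nothing internal to compare against; what you have written is the standard chart-by-chart verification, and all the essential points are in place: the identity $R_{(g)} = R^{(d)}_{(g)}$ for $g$ homogeneous of positive degree in $R^{(d)}$ (your degree bookkeeping $a \in R_{nkd} \subseteq R^{(d)}$ is exactly right), the covering of $\Proj R$ by the opens $D_+(x^d)$, and the compatibility on overlaps $D_+(g) \cap D_+(h) = D_+(gh)$. Two small points worth making explicit if you write this up in full: first, the reason the local identifications ``evidently agree'' is that each equality $R_{(g)} = R^{(d)}_{(g)}$ is an equality of subrings of the common localization $R_{gh}$ (or $R_g$), so it commutes with the restriction maps $R_{(g)} \to R_{(gh)}$ by construction rather than by a separate computation; second, to conclude that the glued morphism is an isomorphism you should either glue the inverse chart isomorphisms as well (the same compatibility argument applies on the $\Proj R^{(d)}$ side, where the $D_+(g)$ cover by definition) or observe directly that the map $\mathfrak{p} \mapsto \mathfrak{p} \cap R^{(d)}$ is the global map your charts are describing. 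Your closing remark is also accurate: the Noetherian hypothesis plays no role in this isomorphism; it is relevant to other assertions in Eisenbud's exercise (e.g.\ finite generation of $R^{(d)}$), not to $\Proj R \isom \Proj R^{(d)}$.
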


\begin{proposition} \label {isomorphicSubring}
Let $S = (a_1, \dots, a_n)$, $S' = (a_1', \dots, a_n') \in (\Nat\setminus\{0\})^n$ ($n\ge3$) and assume that $i \in \{1, \dots, n\}$
is such that $S \le^i S'$.  Then $B_S \isom B_{S'}^{(k)}$, where we define $k = a_i' / a_i \in \Nat \setminus\{0\}$.
Consequently,  $\Proj B_S \isom \Proj B_{S'}$.
\end{proposition}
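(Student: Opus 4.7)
The plan is to construct an explicit graded $\bk$-algebra isomorphism $B_S \isom B_{S'}^{(k)}$; the consequence for $\Proj$ then follows from Proposition~\ref{projIsomorphism}. Write $x_1,\dots,x_n$ and $y_1,\dots,y_n$ for the images of the variables in $B_S$ and $B_{S'}$ respectively. The hypothesis $S \le^i S'$ gives $a_j = a_j'$ for $j \neq i$ and $a_i' = ka_i$, so the $\bk$-algebra homomorphism $\bk[X_1,\dots,X_n] \to B_{S'}$ sending $X_i \mapsto y_i^{k}$ and $X_j \mapsto y_j$ (for $j\neq i$) carries $X_1^{a_1}+\cdots+X_n^{a_n}$ to $y_1^{a_1'}+\cdots+y_n^{a_n'} = 0$, and so descends to a ring map $\pi : B_S \to B_{S'}$.

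The main arithmetic step is to establish that $L' = kL$ and $\gcd(k, d_i) = 1$, where $L = \lcm(S)$, $L' = \lcm(S')$, $d_j = L/a_j$ and $d_j' = L'/a_j'$. Writing $M = \lcm(S_i) = \lcm(S_i')$, the hypothesis $g_i(S') = \gcd(ka_i, M) \mid a_i$ forces $\gcd(ka_i, M) = \gcd(a_i, M)$, hence $L' = \lcm(M, ka_i) = k\lcm(M, a_i) = kL$. Consequently $d_i' = d_i$ and $d_j' = kd_j$ for $j\neq i$. Since $(d_1',\dots,d_n')$ is normal (Lemma~\ref{equivalentType2}) and $k$ divides $d_j'$ for each $j\neq i$, any common divisor of $k$ and $d_i = d_i'$ divides $\gcd(d_1',\dots,d_n') = 1$, giving $\gcd(k, d_i) = 1$. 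These degree relations also say that $\pi$ multiplies degrees by $k$, so $\pi(B_S) \subseteq B_{S'}^{(k)}$.

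To show $\pi : B_S \to B_{S'}^{(k)}$ is an isomorphism, set $R = \bk[Y_1,\dots,Y_i^{k},\dots,Y_n] \subset \bk[Y_1,\dots,Y_n]$ and $Q = Y_1^{a_1'} + \cdots + Y_n^{a_n'}$. Since $k \mid a_i'$, we have $Q \in R$, and $\pi$ factors as $B_S \isom R/(Q) \to \bk[Y_1,\dots,Y_n]/(Q) = B_{S'}$, where the second arrow is induced by inclusion. The key algebraic fact is that $\bk[Y_1,\dots,Y_n]$ is a free $R$-module with basis $1, Y_i,\dots, Y_i^{k-1}$. From this, injectivity is immediate: if $p = Qg$ with $p \in R$, decomposing $g = \sum_{\nu=0}^{k-1} g_\nu Y_i^\nu$ with $g_\nu \in R$ and comparing $R$-basis coefficients forces $Qg_\nu = 0$ and hence $g_\nu = 0$ for $\nu > 0$, so $g \in R$. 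For surjectivity onto $B_{S'}^{(k)}$, lift any homogeneous element of $B_{S'}$ of degree $N$ (with $k \mid N$) to a homogeneous polynomial $p \in \bk[Y_1,\dots,Y_n]$; each monomial $Y_1^{e_1}\cdots Y_n^{e_n}$ of $p$ satisfies $e_i d_i + k\sum_{j\neq i} e_j d_j = N$, so $k \mid e_i d_i$, whence $k \mid e_i$ by the coprimality of $k$ and $d_i$; therefore $p \in R$. The most delicate point is the coprimality $\gcd(k, d_i) = 1$, which is what links the hypothesis $g_i(S') \mid a_i$ (a condition on the $a_j$) to the combinatorial requirement dictated by the Veronese (a condition on the $d_j$); once it is in hand, the rest reduces to the routine freeness of $\bk[Y_1,\dots,Y_n]$ over $R$.
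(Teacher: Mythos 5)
Your proof is correct and follows essentially the same route as the paper's: the same substitution $X_i \mapsto Y_i^k$, the same degree relations $d_i'=d_i$, $d_j'=kd_j$ ($j\neq i$) and coprimality $\gcd(k,d_i)=1$ derived from $g_i(S')\mid a_i$, and the same conclusion via Proposition~\ref{projIsomorphism}. You merely spell out (via freeness of $\bk[Y_1,\dots,Y_n]$ over $\bk[Y_1,\dots,Y_i^k,\dots,Y_n]$ and the monomial degree count) the injectivity and Veronese identification that the paper leaves implicit.
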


\begin{proof}
We may assume that $i=1$.
Define $(d_1, \dots, d_n) = \overline{S}$ and $(d_1', \dots, d_n') = \overline{S'}$.
Let us prove:
\begin{equation}  \label {Vv9erRldvxjJhHcuiosX}
(d_1', \dots, d_n') = (d_1, k d_2, \dots, k d_n) \quad \text{and} \quad \gcd(d_1, k)=1.
\end{equation}
Let $L=\lcm(S)$,  $L'=\lcm(S')$, and $L_1 = \lcm(S_1) = \lcm(S_1')$. Let $g_1 = g_1(S')=g_1(S)$, i.e., $g_1 = \gcd(a_1', L_1) = \gcd(a_1, L_1)$.
We have $L = \lcm(a_1, L_1) = a_1 L_1/g_1$  and $L' = \lcm(a_1', L_1) = a_1' L_1/g_1$, so  for each $j \in \{1, \dots, n\}$, 
we have $d_j = L/a_j = \frac{a_1 L_1}{a_j g_1}$ and $d_j' = L'/a_j' = \frac{a_1' L_1}{a_j' g_1}$. This gives $d_1' = L_1/g_1 = d_1$
and, for $j\neq 1$, $d_j' =  \frac{a_1' L_1}{a_j' g_1} =  \frac{k a_1 L_1}{a_j g_1} = k d_j$;
this proves the first part of \eqref{Vv9erRldvxjJhHcuiosX}.
Since $\gcd(d_1,k)$ is a divisor of $\gcd(d_1, kd_2, \dots, kd_n) = \gcd(d_1', \dots, d_n') = 1$, \eqref{Vv9erRldvxjJhHcuiosX} is proved.
Now let $\Phi : \bk[X_1, \dots, X_n] \to \bk[Y_1, \dots, Y_n]$ be the $\bk$-homomorphism that sends $X_1$ to $Y_1^k$ and $X_j$ to $Y_j$ for $j>1$.
Noting that  $Y_1^{a_1'} + \cdots + Y_n^{a_n'} = Y_1^{ka_1} + Y_2^{a_2} + \cdots + Y_n^{a_n}$, we see that  
$\Phi^{-1} \big( \langle Y_1^{a_1'} + \cdots + Y_n^{a_n'} \rangle \big) =  \langle X_1^{a_1} + \cdots + X_n^{a_n} \rangle$,
so $\Phi$ induces an injective homomorphism 
$$
\phi : B_S = \bk[X_1, \dots, X_n] / \langle X_1^{a_1} + \cdots + X_n^{a_n} \rangle 
\to B_{S'} = \bk[Y_1, \dots, Y_n] / \langle Y_1^{a_1'} + \cdots + Y_n^{a_n'} \rangle .
$$
Then $B_S \isom \phi( B_S ) = \bk[y_1^k, y_2, \dots, y_n]$.
Since $\deg(y_j) = d_j'$ for all $j$, \eqref{Vv9erRldvxjJhHcuiosX} implies that $B_{S'}^{(k)} = \bk[y_1^k, y_2, \dots, y_n]$
and that $\phi$ is homogeneous (meaning $\phi \big( (B_S)_i \big) \subseteq (B_{S'})_{ki}$ for all $i \in \Nat$); so $B_S \isom B_{S'}^{(k)}$ as graded rings.
Then  $\Proj B_S \isom \Proj B_{S'}$ follows from Proposition~\ref{projIsomorphism}.
\end{proof}

\begin{remark}
It is interesting to observe that the geometry of $\Proj B_S$ is in general not sufficient to determine whether
or not $B_S$ is rigid (whereas the geometry of $\Spec B_S$ is of course sufficient).
For example, let $S = (2,3,3,2)$, $S'=(2,3,3,4)$ and $S'' = (10,3,3,4)$.
Since $S <^4 S' <^1 S''$,
Proposition~\ref{isomorphicSubring} implies that $\Proj B_{S}  \isom \Proj B_{S'} \isom \Proj B_{S''}$.
However, $B_{S} = B_{2,3,3,2}$ is not rigid because $(2,3,3,2) \notin T_4$, and $B_{S''} = B_{10,3,3,4}$ is rigid by Theorem~\ref{collection}(e).
Interestingly, we don't know whether $B_{S'}$ is rigid or not.
\end{remark}

	\section{Stable Rigidity}

		\begin{lemma}  \label {pv923oF5O129rugfhg29}
			Let $E \subseteq F$ be a field extension, $m\ge1$, $t_1, \dots, t_m$ independent indeterminates over $F$ and
			$f,g \in E[t_1,\dots,t_m] \subseteq F[t_1,\dots,t_m]$.
			Then $f,g$ are relatively prime in $E[t_1,\dots,t_m]$ if and only if they are relatively prime in $F[t_1,\dots,t_m]$.
		\end{lemma}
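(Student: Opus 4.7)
The $(\Leftarrow)$ direction will be routine: any common divisor $d \in E[t_1,\dots,t_m]$ of $f$ and $g$ is also a common divisor in $F[t_1,\dots,t_m]$, and if it is a unit there then $d \in F^\ast \cap E[t_1,\dots,t_m] = E^\ast$ (using that units in a polynomial ring over a field are the nonzero constants); hence $d$ is a unit in $E[t_1,\dots,t_m]$. I will dispense with this in one sentence.

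For the substantive direction $(\Rightarrow)$, my plan is to reformulate coprimality in terms of injectivity. In any UFD $R$, two nonzero elements $f,g \in R$ are relatively prime if and only if multiplication by $g$ on $R/(f)$ is injective; this is immediate from unique factorization, since $gr \equiv 0 \pmod f$ forces every prime factor of $f$ to divide $gr$, hence to divide $r$. The degenerate cases $f=0$ or $g=0$ reduce to the observation that a nonzero element of $E[t_1,\dots,t_m]$ is a unit there iff it is a unit in $F[t_1,\dots,t_m]$, which is clear from $E[t_1,\dots,t_m]^\ast = E^\ast$ and $F[t_1,\dots,t_m]^\ast = F^\ast$.

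Assuming then that $f,g$ are nonzero and coprime in $E[t_1,\dots,t_m]$, multiplication by $g$ on $E[t_1,\dots,t_m]/(f)$ is injective. To finish, I will apply flat base change: since $F$ is free, and hence flat, as an $E$-module, the identification $F[t_1,\dots,t_m] \cong E[t_1,\dots,t_m] \otimes_E F$ yields
\[
F[t_1,\dots,t_m]/(f) \;\cong\; \bigl( E[t_1,\dots,t_m]/(f) \bigr) \otimes_E F,
\]
and multiplication by $g$ on the right-hand side is obtained from the corresponding map on $E[t_1,\dots,t_m]/(f)$ by tensoring with $\mathrm{id}_F$. Flatness preserves injectivity, so $g$ acts injectively on $F[t_1,\dots,t_m]/(f)$, giving coprimality in $F[t_1,\dots,t_m]$.

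The only step requiring genuine care is the UFD characterization of coprimality via injective multiplication, together with the separate handling of the trivial zero cases; once these are in place, the flat base change argument is entirely routine, and I do not foresee a serious obstacle.
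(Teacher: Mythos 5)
Your proof is correct, but it follows a genuinely different route from the paper's. The paper states the lemma and leaves verification to the reader; its intended argument (present in the source) is the classical one: assuming a common non-unit divisor $h$ in $F[t_1,\dots,t_m]$ with $\deg_{t_i}(h)>0$, one passes to the principal ideal domains $E(t_1,\dots,\widehat{t_i},\dots,t_m)[t_i]\subseteq F(t_1,\dots,\widehat{t_i},\dots,t_m)[t_i]$ and observes that the Euclidean algorithm computes the same $\gcd$ of $f,g$ over either coefficient field, so a positive-degree common factor over $F(\dots)$ forces one over $E(\dots)$, and hence (essentially by Gauss's lemma) over $E[t_1,\dots,t_m]$. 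You instead characterize coprimality of nonzero $f,g$ in a UFD by injectivity of multiplication by $g$ on $R/(f)$, and transfer this along the flat (indeed free) base change $E\to F$ via $F[t_1,\dots,t_m]/(f)\cong\bigl(E[t_1,\dots,t_m]/(f)\bigr)\otimes_E F$; this is a clean, conceptual argument that avoids the reduction to one variable and the Euclidean algorithm, and it isolates exactly what is used (flatness of $F$ over $E$), whereas the paper's route is more elementary but needs the gcd-is-insensitive-to-field-extension fact plus a Gauss-lemma step to return from $E(\dots)[t_i]$ to $E[t_1,\dots,t_m]$. Two small points to tighten: your justification of the ``if and only if'' only sketches the implication coprime $\Rightarrow$ injective (and should be phrased with $p$-adic valuations rather than ``every prime factor divides $r$'', since multiplicities matter); the converse, which is the direction you actually invoke in $F[t_1,\dots,t_m]$, deserves its one line as well (if $d=\gcd(f,g)$ is a non-unit, then $f/d$ is a nonzero class in $R/(f)$ killed by $g$). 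Also note that the isomorphism above is $E[t_1,\dots,t_m]$-linear and $g\in E[t_1,\dots,t_m]$, which is what legitimizes viewing multiplication by $g$ as the tensored map; you say this, and it is exactly right. With these clarifications your argument is complete and matches the lemma as stated (the degenerate cases with $f=0$ or $g=0$ being handled by the unit comparison $E[t_1,\dots,t_m]^*=E^*$, $F[t_1,\dots,t_m]^*=F^*$, consistent with the paper's Definition~\ref{cp0g32fd6dsx0hXbc3t8}).
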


Verification of the above Lemma is left to the reader. We quote Theorem 3.1 of \cite{DeBondt_MasonThm_2009}:
		
		\begin{theorem}  \label {pc09vi2b39efqkw}
			Let $m\ge1$, $n\ge3$, $g_1, \dots, g_n \in \Comp[t_1,\dots,t_m] = \Comp^{[m]}$ and $a_1, \dots, a_n \in \Nat\setminus\{0\}$ be such that:
			\begin{itemize}
				
				\item $g_1^{a_1} + \cdots + g_n^{a_n} = 0$
				
				\item  $g_1, \dots, g_n$ are not all constant
				
				\item given any $1 \le i_1 < \dots < i_s \le n$ such that $g_{i_1}^{a_{i_1}} + \cdots + g_{i_s}^{a_{i_s}} = 0$, 
				we have $\gcd( g_{i_1}, \dots, g_{i_s}) = 1$.
				
			\end{itemize}
			Then $\sum_{i=1}^n \frac1{a_i} > \frac1{d-1}$ where $d$ is the dimension of the $\Comp$-vector space spanned by $g_1^{a_1},  \dots,  g_n^{a_n}$.
		\end{theorem}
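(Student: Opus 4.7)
The plan is to reduce the multivariable statement to the one-variable case $m=1$ and then invoke the several-term Mason--Stothers inequality (Brownawell--Masser). First I would restrict to a generic affine line in $\Comp^m$: pick generic $\alpha_1,\beta_1,\dots,\alpha_m,\beta_m \in \Comp$ and define $\sigma \colon \Comp[t_1,\dots,t_m] \to \Comp[s]$ by $t_i \mapsto \alpha_i + \beta_i s$, setting $\tilde g_i = \sigma(g_i) \in \Comp[s]$. The identity $\sum_i \tilde g_i^{a_i} = 0$ is immediate, and for generic $(\alpha,\beta)$ a subsum $\sum_{i\in J} \tilde g_i^{a_i}$ vanishes if and only if $\sum_{i\in J} g_i^{a_i}=0$, because a nonzero polynomial in $\Comp[t_1,\dots,t_m]$ cannot vanish identically along a generic line. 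In particular the $\Comp$-span of $\tilde g_1^{a_1},\dots,\tilde g_n^{a_n}$ still has dimension $d$, and the $\tilde g_i$ are not all constant.

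Next I would verify that the coprimality hypothesis survives the substitution. For each vanishing subsum indexed by $J$ we have $\gcd(g_i : i \in J) = 1$ in $\Comp[t_1,\dots,t_m]$. Lemma~\ref{pv923oF5O129rugfhg29}, applied with $E = \Comp$ and $F = \Comp(\alpha_1,\beta_1,\dots,\alpha_m,\beta_m)$, transports this to coprimality in $F[t_1,\dots,t_m]$. A Bertini-style genericity argument---showing that the locus of $(\alpha,\beta)$ at which a common factor appears after specialization is contained in a proper Zariski-closed subset of the $2m$-dimensional parameter space---then yields $\gcd(\tilde g_i : i \in J) = 1$ in $\Comp[s]$ for generic $(\alpha,\beta)$. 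Imposing all the required genericity conditions simultaneously (one nonempty Zariski-open set per subsum $J$, plus one for the dimension $d$, plus one for ``not all constant'') is possible since there are only finitely many of them.

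With the reduction to $m=1$ complete, the plan is to partition the vanishing relation $\sum_i \tilde g_i^{a_i} = 0$ into minimal vanishing subsums and apply Brownawell--Masser on each: for a minimal vanishing subsum indexed by a set $J$ of size $s$, with coprime entries, one obtains an estimate of the shape
$$\max_{i\in J} \deg(\tilde g_i^{a_i}) \le (s-2)\Bigl( \sum_{i\in J} \deg \tilde g_i - 1 \Bigr).$$
Dividing by $\max_{i\in J} a_i \deg \tilde g_i$, combining these inequalities across the minimal vanishing subsums, and using that the space of linear relations among $\tilde g_1^{a_1},\dots,\tilde g_n^{a_n}$ has dimension $n-d$ (so that the minimal relations span a $d$-dimensional object on the sum side), an elementary bookkeeping converts the degree bounds into the target inequality $\sum_{i=1}^n 1/a_i > 1/(d-1)$, with the factor $d-1$ reflecting the codimension in which the relations live.

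The main obstacle is the second step: the Bertini-style passage from coprimality of the $g_i$'s to coprimality of their restrictions $\tilde g_i$. Lemma~\ref{pv923oF5O129rugfhg29} handles the easier ``extension of the coefficient field'' half cleanly, but the ``specialization at generic concrete scalars'' half requires a separate genericity argument, most naturally via the resultant (or via a dimension count of the subvariety of $(\alpha,\beta)$ at which a common prime factor can appear). Everything else is either direct input from Brownawell--Masser or linear-algebra bookkeeping around $d$; the delicate heart of the argument is that a single well-chosen specialization preserves \emph{all} the hypotheses at once.
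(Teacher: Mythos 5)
First, a point of comparison: the paper does not prove Theorem~\ref{pc09vi2b39efqkw} at all --- it is quoted verbatim as Theorem~3.1 of de Bondt \cite{DeBondt_MasonThm_2009} --- so there is no internal argument to measure yours against; your proposal has to stand on its own as a proof of de Bondt's theorem. Your reduction to $m=1$ along a generic line is essentially workable, with one correction: restriction to a line need \emph{not} preserve the dimension of the span (the span of $1,t_1,t_2$ maps into the space of polynomials of degree $\le 1$ in $s$ for every line), but a drop in dimension only strengthens the target inequality, so this part can be repaired; likewise the coprimality does survive, since a gcd-one family has common zero locus of codimension $\ge 2$, which a generic line misses.

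The genuine gap is in the one-variable step. The inequality you invoke for a minimal vanishing subsum of size $s$, with the linear factor $(s-2)$, is not Brownawell--Masser: under the hypotheses you actually have at that point (minimality of the subsum plus gcd one), the Brownawell--Masser bound carries the quadratic factor $\binom{s-1}{2}$; the linear factor $(s-2)$ is the Wronskian-type generalization of Mason's theorem and needs a linear-independence hypothesis on $s-1$ of the terms, which minimality does not supply (e.g.\ $1+(-2)+t+(1-t)=0$ is a minimal vanishing sum whose terms span only a $2$-dimensional space). More fundamentally, even granting your $(s-2)$ inequality, the proposed bookkeeping cannot reach the stated conclusion: dividing and summing over a block $J$ of size $s$ yields $\sum_{i\in J}1/a_i>1/(s-2)$, so if the full relation is itself minimal ($s=n$) you obtain only $\sum_i 1/a_i>1/(n-2)$, whereas the theorem asserts $\sum_i 1/a_i>1/(d-1)$ with $d$ the dimension of the span, which can be much smaller than $n-1$ even for a minimal relation. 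The dependence on $d$ --- the whole point of de Bondt's refinement --- is simply not a function of block sizes; it comes from differentiating $d-1$ times (a Wronskian of a basis of the $d$-dimensional span of $g_1^{a_1},\dots,g_n^{a_n}$), and your decomposition into minimal vanishing subsums never sees that quantity. Note also that with the corrected quadratic Brownawell--Masser factor your route would give only $\sum_i 1/a_i>2/((n-1)(n-2))$, which is too weak even for the paper's application of the theorem in Corollary~\ref{0f9b23td8awpe7g}. So the delicate heart of the argument is not the specialization step you flag, but the missing Wronskian/dimension mechanism behind the factor $d-1$.
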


		\begin{corollary}   \label {0f9b23td8awpe7g}
			Let $K$ be a field of characteristic zero, $m\ge1$, $n\ge3$, $g_1, \dots, g_n \in K[t_1,\dots,t_m] = K^{[m]}$
			and $a_1, \dots, a_n \in \Nat\setminus\{0\}$ be such that:
			\begin{itemize}
				
				\item[(i)] $g_1^{a_1} + \cdots + g_n^{a_n} = 0$
				
				\item[(ii)] $\sum_{i=1}^n \frac1{a_i} \le \frac1{n-2}$
				
				\item[(iii)]  $g_1, \dots, g_n$ are pairwise relatively prime.
				
			\end{itemize}
			Then $g_1, \dots, g_n \in K$.
		\end{corollary}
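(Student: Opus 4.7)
The plan is to derive a contradiction from Theorem~\ref{pc09vi2b39efqkw}, after first reducing to the case $K=\Comp$. Suppose, for contradiction, that some $g_j \notin K$. Let $K_0$ be the subfield of $K$ generated over $\Rat$ by the (finitely many) coefficients of $g_1,\dots,g_n$. Then $K_0$ is a finitely generated extension of $\Rat$, so since $\Comp$ has infinite transcendence degree over $\Rat$ we may fix an embedding $K_0 \hookrightarrow \Comp$. By Lemma~\ref{pv923oF5O129rugfhg29} (applied to each pair), pairwise relative primality of $g_1,\dots,g_n$ transfers both from $K[t_1,\dots,t_m]$ down to $K_0[t_1,\dots,t_m]$ and from there up to $\Comp[t_1,\dots,t_m]$; clearly (i), (ii), and the non-constancy of $g_j$ are preserved as well. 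Thus I may assume $K = \Comp$.

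Next I would verify the three bullets of Theorem~\ref{pc09vi2b39efqkw}. The first is (i), and the second is the fact that $g_j$ is non-constant. Before checking the third, observe that pairwise coprimality, combined with the existence of a non-constant $g_j$, forces every $g_i$ to be nonzero: if $g_i = 0$ then $\gcd(g_i,g_j) = g_j$ would need to be a unit, contradicting (iii). With this in hand, given any $\{i_1,\dots,i_s\}\subseteq\{1,\dots,n\}$ with $g_{i_1}^{a_{i_1}}+\cdots+g_{i_s}^{a_{i_s}}=0$, the case $s=1$ is impossible (it would require $g_{i_1}=0$), and for $s\ge 2$ the required $\gcd(g_{i_1},\dots,g_{i_s})=1$ follows at once from (iii).

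Applying Theorem~\ref{pc09vi2b39efqkw} then yields $\sum_{i=1}^n \tfrac{1}{a_i} > \tfrac{1}{d-1}$, where $d$ is the dimension over $\Comp$ of $\Span\{g_1^{a_1},\dots,g_n^{a_n}\}$. Since these $n$ nonzero elements satisfy the nontrivial linear relation $\sum_i g_i^{a_i}=0$, we have $d \le n-1$, whence $\tfrac{1}{d-1} \ge \tfrac{1}{n-2}$. Combining gives $\sum_{i=1}^n \tfrac{1}{a_i} > \tfrac{1}{n-2}$, in direct contradiction with (ii).

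The main delicacies to watch for are two pieces of bookkeeping rather than any deep step: first, the reduction to $\Comp$ must preserve pairwise coprimality — this is exactly the content of Lemma~\ref{pv923oF5O129rugfhg29}; and second, the small case analysis ruling out $g_i=0$ and the singleton $s=1$ when verifying the third hypothesis of De Bondt's theorem. Once these are handled, the final arithmetic is immediate.
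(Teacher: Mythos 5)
Your proof is correct and follows essentially the same route as the paper: reduce to $K=\Comp$ via the coefficient field $K_0$ and Lemma~\ref{pv923oF5O129rugfhg29}, then invoke Theorem~\ref{pc09vi2b39efqkw}; you merely organize it as a contradiction (explicitly ruling out $g_i=0$ to handle singleton zero-sum subsets), whereas the paper uses the contrapositive and disposes of the case $d\le 1$ separately via (iii). The only point you leave tacit is that $d\ge 2$ (needed for $\frac1{d-1}\ge\frac1{n-2}$), but under the theorem's hypotheses this is automatic --- if $d=1$ every irreducible factor of one $g_j^{a_j}$ would divide all the $g_i$, contradicting coprimality --- so it is a one-line remark rather than a gap.
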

		
		\begin{proof}
First consider the case where $K=\Comp$. Let $d$ be the dimension of the $\Comp$-vector space spanned by $g_1^{a_1},  \dots,  g_n^{a_n}$.
Note that $d<n$ by (i);
if $d\le1$ then the conclusion ($g_1, \dots, g_n \in K$) immediately follows from (iii), so we may assume that $1 < d < n$.
Then $\frac1{d-1}$ is defined and $\frac1{d-1}\ge \frac1{n-2}$. So (ii) gives  $\sum_{i=1}^n \frac1{a_i} \le \frac1{d-1}$
and Theorem~\ref{pc09vi2b39efqkw} implies that $g_1, \dots, g_n \in K$.
So the case $K=\Comp$ of Corollary \ref{0f9b23td8awpe7g} is true.

			Now let $K$ be arbitrary. Let $K_0 \subseteq K$ be the extension of $\Rat$ generated by the coefficients of $g_1, \dots, g_n$.
			Then $K_0$ can be embedded in $\Comp$;
			more precisely, if we choose a sufficiently large overfield $L$ of $K$ then we may find a copy of $\Comp$ in $L$
			forming a diagram of fields as in the left part of:
			$$
			\xymatrix@R=10pt{
				& L  \\
				K \ar@{-}[ru] \ar@{-}[rd] && \Comp \ar@{-}[lu] \ar@{-}[ld]  \\
				& K_0
			}
			\qquad
			\xymatrix@R=10pt{
				& L[t_1,\dots,t_m]  \\
				K[t_1,\dots,t_m] \ar@{-}[ru] \ar@{-}[rd] && \Comp[t_1,\dots,t_m] \ar@{-}[lu] \ar@{-}[ld]  \\
				& K_0[t_1,\dots,t_m]
			}
			$$
			Now $g_1, \dots, g_n \in K_0[t_1,\dots,t_m] \subseteq \Comp[t_1,\dots,t_m]$ are  pairwise relatively prime in $\Comp[t_1,\dots,t_m]$
			by Lemma \ref{pv923oF5O129rugfhg29}.
			So $g_1, \dots, g_n$ satisfy (i--iii) as elements of $\Comp[t_1,\dots,t_m]$.
			By the case $K=\Comp$ of Corollary~\ref{0f9b23td8awpe7g}, we get $g_1, \dots, g_n \in \Comp$, so $g_1, \dots, g_n \in K$.
		\end{proof}
		
We need the notion of relatively prime elements of an arbitrary domain.

\begin{definition} \label {cp0g32fd6dsx0hXbc3t8}
Let $x,y \in B$ where $B$ is a domain.
One says that $x,y$ are {\it relatively prime} in $B$ if the following hold:
\begin{itemize}
\item[(i)] $xB \cap yB = xyB$ \qquad (ii) if $0 \in \{x,y\}$ then $\{x,y\} \cap B^* \neq \emptyset$.
\end{itemize}
Note that this agrees with the usual notion when $B$ is a UFD.
\end{definition}
		
		\begin{remark}  \label {9029bf923i0d}
			Let $x,y$ be  relatively prime elements of a domain $B$.
			\begin{enumerata}
				
				\item If $S$ is a multiplicative set of $B$ such that $0 \notin S$, then $x,y$ are  relatively prime in $S^{-1}B$. 
				
				\item If $A$ is a factorially closed subring of $B$ such that $x,y \in A$, then $x,y$ are  relatively prime in $A$.
				
				\item If $B' = B^{[N]}$ for some $N\ge0$ then $x,y$ are relatively prime in $B'$.
				
			\end{enumerata}
		\end{remark}

\begin{definition}
Let $B$ be a domain of characteristic zero.
\begin{enumerata}

\item $\ML(B) = \bigcap_{ D \in \lnd(B) } \ker(D)$

\item The \textit{rigid core} of $B$ is defined as $\bigcap_{i=0}^\infty \ML_i(B)$,
where one defines $\ML_0(B)=B$ and $\ML_{i+1}(B) = \ML( \ML_i B )$ for all $i$.

\end{enumerata}
\end{definition}
		
		The next result generalizes Theorem 6.1(a) of \cite{freudenburg2013}.

		\begin{theorem}  \label {cp09vvh2930cvqn3909}
			Let $B$ be a domain of characteristic zero,
			$n\ge3$, $x_1, \dots, x_n \in B$ and $a_1,\dots,a_n \in \Nat\setminus\{0\}$.  Assume that
			\begin{itemize}
				
				\item[(i)] $x_1^{a_1} + \cdots + x_n^{a_n} = 0$
				
				\item[(ii)] $\sum_{i=1}^n \frac1{a_i} \le \frac1{n-2}$
				
				\item[(iii)]  $x_1, \dots, x_n$ are pairwise relatively prime in $B$.
				
			\end{itemize}
			Then  $x_1,\dots,x_n \in \ML(R)$ in each of the following cases:
\begin{enumerata}

\item $R$ is a factorially closed subring of $B$ satisfying $x_1,\dots,x_n \in R$;

\item $R = B^{[N]}$ for some $N$. 

\end{enumerata}
Moreover, $x_1, \dots, x_n$ belong to the rigid core of $B$.
\end{theorem}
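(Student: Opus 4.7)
The plan is to prove (a) by invoking the structure theorem for nonzero LNDs, \ref{p0cfi2k309cbqp90ws}\eqref{teik5i68a9we}, to linearise an arbitrary $D\in\lnd(R)\setminus\{0\}$ and then applying Corollary~\ref{0f9b23td8awpe7g} to the induced polynomial identity. Concretely, given such a $D$, I would set $A=\ker D$, choose a preslice $t\in R$ with $\alpha=D(t)\in A$, and invoke \ref{p0cfi2k309cbqp90ws}\eqref{teik5i68a9we} to write $R_\alpha=A_\alpha[t]=A_\alpha^{[1]}$. Each $x_i$ then has a unique expression $x_i=g_i(t)\in A_\alpha[t]\subseteq K[t]$ with $K=\Frac(A_\alpha)=\Frac(A)$, and hypothesis (i) becomes the identity $\sum_{i=1}^n g_i^{a_i}=0$ in $K[t]=K^{[1]}$. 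If I can transport hypothesis (iii) all the way to $K[t]$, Corollary~\ref{0f9b23td8awpe7g} will force each $g_i$ to lie in $K$, so $x_i=g_i\in A_\alpha$; combining this with $x_i\in R$ and the identity $A_\alpha\cap R=A$ from part (d)(b) of \ref{p0cfi2k309cbqp90ws} (applied to $S=\{1,\alpha,\alpha^2,\dots\}\subseteq A$) yields $x_i\in A$, i.e.\ $D(x_i)=0$.

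The crucial bookkeeping will be a chain of relative-primeness transfers $B\rightsquigarrow R\rightsquigarrow R_\alpha\rightsquigarrow K[t]$, carried out via Remark~\ref{9029bf923i0d}: part (b) moves pairwise relative primeness from $B$ into the factorially closed subring $R$, and two successive uses of part (a)---first localizing at $\{1,\alpha,\alpha^2,\dots\}$ and then at $A_\alpha\setminus\{0\}$---push it through $R_\alpha=A_\alpha[t]$ into $K[t]$. Part (b) of the theorem requires no new argument: the same reasoning applies to $R=B^{[N]}$ once one uses Remark~\ref{9029bf923i0d}(c) in place of Remark~\ref{9029bf923i0d}(b) to obtain relative primeness of $x_1,\dots,x_n$ in the polynomial extension $R$.

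For the ``moreover'' clause, I would prove by induction on $j\ge 0$ that $x_1,\dots,x_n\in\ML_j(B)$. The base $j=0$ is trivial. For the step, I first observe that $\ML_j(B)$ is factorially closed in $B$: by \ref{p0cfi2k309cbqp90ws}\eqref{c0ovjn3vr7} every kernel of an LND is factorially closed, hence so is any intersection of such kernels, and factorial closure is transitive along the tower $B\supseteq\ML_1(B)\supseteq\ML_2(B)\supseteq\cdots$. Then part (a), applied with $R=\ML_j(B)$, yields $x_i\in\ML(\ML_j(B))=\ML_{j+1}(B)$, closing the induction. I expect the main obstacle throughout to be the relative-primeness transfers rather than any conceptual novelty: once condition (iii) is safely installed inside $K^{[1]}$, the rest follows mechanically from the linearisation theorem and Corollary~\ref{0f9b23td8awpe7g}.
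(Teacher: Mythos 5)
Your proposal is correct and follows essentially the same route as the paper: transfer pairwise relative primeness into $R$ and then into $K^{[1]}$ via Remark~\ref{9029bf923i0d}, apply Corollary~\ref{0f9b23td8awpe7g} to force each $x_i$ into $K$, pull back to $\ker D$, and handle the rigid core by the same induction using factorial closedness of $\ML_j(B)$ in $B$. The only (cosmetic) difference is that you localize in two steps through $R_\alpha=A_\alpha[t]$ and conclude via $A_\alpha\cap R=A$, whereas the paper localizes directly at $A\setminus\{0\}$ and uses $R\cap K=A$.
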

		
		\begin{proof}
(a) Let $R$ be a factorially closed subring of $B$ satisfying $x_1,\dots,x_n \in R$.
Let $D \in \lnd(R)$ and let $A = \ker(D)$; we claim that $x_1,\dots,x_n \in A$.
To show this, we may assume that $D\neq0$. Let $S = A \setminus \{0\}$, then $S^{-1}R = K^{[1]}$ where we set $K = \Frac(A)$.
Note that $x_1,\dots,x_n$ are pairwise relatively prime in $R$ by Remark \ref{9029bf923i0d}(b),
so they are pairwise relatively prime in  $S^{-1}R = K^{[1]}$ by  Remark \ref{9029bf923i0d}(a).
Then $x_1,\dots,x_n \in K$ by Corollary \ref{0f9b23td8awpe7g}. As $A$ is factorially closed in $R$,
we have $R \cap K = A$ and hence $x_1,\dots,x_n \in A$.
This argument shows that $x_1,\dots,x_n \in \ML(R)$, so case (a) is proved.

(b) Assume that $R = B^{[N]}$ for some $N$.
Then $x_1,\dots,x_n$ are pairwise relatively prime in $R$ by Remark \ref{9029bf923i0d}(c).
Applying part (a) to $x_1,\dots,x_n \in R$ shows that $x_1,\dots,x_n \in \ML(R')$ for any factorially closed subring $R'$ of $R$ containing $x_1, \dots, x_n$;
in particular,  $x_1,\dots,x_n \in \ML(R)$. So we are done in case (b).

Observe that  $x_1,\dots,x_n \in \ML_0(B)$ and that if $i$ is such that  $x_1,\dots,x_n \in \ML_i(B)$ then
$x_1,\dots,x_n \in \ML_{i+1}(B)$  ($R=\ML_i(B)$ is a factorially closed subring of $B$,
so case (a) gives $x_1,\dots,x_n \in \ML(R)$).
 So $x_1,\dots,x_n$ belong to the rigid core of $B$.
\end{proof}

One says that a ring $B$  of characteristic zero is \textit{stably rigid} if $B \subseteq \ML(R)$ for every overring $R$ of $B$
such that $R = B^{[N]}$ for some $N$.
In the following statement, we set $B_{a_1,\dots,a_n} = \bk[X_1,\dots,X_n]/ \lb X_1^{a_1} + \cdots + X_n^{a_n} \rb$
where $\bk$ is a field of characteristic zero.
		
\begin{corollary}  \label {ck9fweymrdkayecumcff83}
Let $n\ge3$ and $(a_1,\dots,a_n) \in (\Nat\setminus\{0\})^n$ be such that $\sum_{i=1}^n \frac1{a_i} \le \frac1{n-2}$.
Then $B_{a_1,\dots,a_n}$ is stably rigid.
\end{corollary}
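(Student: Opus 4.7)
The plan is to apply Theorem~\ref{cp09vvh2930cvqn3909}(b) directly to $B = B_{a_1,\dots,a_n}$, with $x_1,\dots,x_n \in B$ the images of $X_1,\dots,X_n$ as in Definition~\ref{8vroq98vbcu56msw9nw0e}. Fix an arbitrary $N \ge 0$ and set $R = B^{[N]}$; to prove stable rigidity I must show $B \subseteq \ML(R)$. Because every locally nilpotent derivation of $R$ kills $\bk$ (by~\ref{p0cfi2k309cbqp90ws}\eqref{c0ovjn3vr7}) and $\ML(R)$ is an intersection of kernels, $\ML(R)$ is a $\bk$-subalgebra of $R$; since $B = \bk[x_1,\dots,x_n]$, it suffices to show $x_i \in \ML(R)$ for each $i$. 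Hypothesis (i) of Theorem~\ref{cp09vvh2930cvqn3909} holds by construction and hypothesis (ii) is exactly our assumption, so the entire proof reduces to verifying hypothesis (iii): that $x_1,\dots,x_n$ are pairwise relatively prime in $B$ in the sense of Definition~\ref{cp0g32fd6dsx0hXbc3t8}.

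This verification is the main obstacle, and I approach it by showing that for any distinct $i,j$ the ordered pair $(x_i,x_j)$ is a regular sequence in $B$. First, the hypothesis $\sum_k 1/a_k \le 1/(n-2)$ forces $a_k \ge 2$ for every $k$: if some $a_k = 1$ then $\sum_l 1/a_l > 1 \ge 1/(n-2)$, a contradiction. All $x_k$ are nonzero, so clause~(ii) of Definition~\ref{cp0g32fd6dsx0hXbc3t8} is vacuous, and only the identity $x_iB \cap x_jB = x_ix_jB$ remains to be shown. That identity follows by the standard chasing argument from the regular-sequence property: $x_i$ is a nonzerodivisor (automatic since $B$ is a domain), and once $x_j$ is also a nonzerodivisor modulo $x_i$, then $x_ia = x_jb$ forces $b \in x_iB$, whence $x_ia \in x_ix_jB$.

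So the concrete task is to show $x_j$ is a nonzerodivisor in $B/x_iB \cong \bk[X_k : k \ne i]/\langle f_i \rangle$, where $f_i = \sum_{k \ne i} X_k^{a_k}$. Because $a_k \ge 2$ for all $k$, the Jacobian ideal $\langle a_k X_k^{a_k-1} : k \ne i\rangle$ vanishes only at the origin of $\aff^{n-1}_{\bk}$; hence the singular locus of the hypersurface $V(f_i)$ has codimension at least one inside $V(f_i)$, which forces $f_i$ to be squarefree and $B/x_iB$ to be reduced. The minimal primes of this quotient correspond to the distinct irreducible factors of $f_i$, and none of these factors is associate to $X_j$: since $n \ge 3$, there exists $k \in \{1,\dots,n\}\setminus\{i,j\}$, and the pure monomial $X_k^{a_k}$ appears as a summand of $f_i$, so $X_j \nmid f_i$. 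In a reduced Noetherian ring the zerodivisors are exactly the elements lying in some minimal prime, so $x_j$ is indeed a nonzerodivisor modulo $x_i$. Pairwise relative primality is thereby established, and Theorem~\ref{cp09vvh2930cvqn3909}(b) delivers $x_1,\dots,x_n \in \ML(R)$, completing the proof.
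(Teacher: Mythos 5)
Your proof is correct, and it follows the same overall strategy as the paper (reduce everything to Theorem~\ref{cp09vvh2930cvqn3909}(b)) but verifies the key hypothesis differently. The paper checks hypothesis (iii) by invoking Lemma~\ref{pc04b98wjwk77subrq9a}: for $n\ge4$ the elements $x_1,\dots,x_n$ are \emph{prime} and pairwise non-associate (this uses that $B_{S_i}$ is a domain, which needs $n-1\ge3$), and the case $n=3$ is then disposed of by citing Theorem~7.1(b) of \cite{freudenburg2013}. You instead prove pairwise relative primality directly, by showing $(x_i,x_j)$ is a regular sequence: $B/x_iB\isom \bk[X_k:k\neq i]/\langle f_i\rangle$, $f_i$ is squarefree by the Jacobian argument (all $a_k\ge2$ follows from the numerical hypothesis), and $X_j\nmid f_i$ because a third pure power $X_k^{a_k}$ survives setting $X_j=0$; this is weaker than primality of the $x_i$ but it is all that Definition~\ref{cp0g32fd6dsx0hXbc3t8} requires, and it works uniformly for all $n\ge3$, so you avoid both Lemma~\ref{pc04b98wjwk77subrq9a} and the external citation for $n=3$. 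You also make explicit the (trivially true, but left implicit in the paper) step that $\ML(R)$ is a $\bk$-subalgebra, so that $x_1,\dots,x_n\in\ML(R)$ gives $B\subseteq\ML(R)$. Two cosmetic remarks: the Jacobian/vanishing-locus argument should be read over $\bar\bk$ (or phrased via the fact that a repeated factor $g$ of $f_i$ would force all partials of $f_i$ to vanish on $V(g)$, a set of dimension $n-2\ge1$, contradicting that they vanish only at the origin); and the squarefreeness step is actually dispensable, since in the UFD $\bk[X_k:k\neq i]$ the zerodivisors modulo $f_i$ are exactly the elements sharing an irreducible factor with $f_i$, so $X_j\nmid f_i$ already suffices. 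Neither point affects correctness.
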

		
\begin{proof}
The case $n=3$ is known (Theorem 7.1(b) of \cite{freudenburg2013}), so we may assume that $n \geq 4$.
Write $B = B_{a_1,\dots,a_n} = \bk[x_1,\dots,x_n]$ where $x_i$ is the canonical image of $X_i$ in $B$. 
By Lemma~\ref{pc04b98wjwk77subrq9a}, $x_1,\dots,x_n$ are prime elements of $B$ and are pairwise relatively prime in $B$.
Consider an overring  $R = B^{[N]}$ of $B$ for some $N$.
Then $x_1,\dots,x_n \in \ML(R)$ by case (b) of Theorem~\ref{cp09vvh2930cvqn3909}, so $B$ is stably rigid.
\end{proof}

\bibliographystyle{plain}

\begin{thebibliography}{1}

\bibitem{affineCones}
Ivan Cheltsov, Jihun Park, and Joonyeong Wong.
\newblock Affine cones over smooth cubic surfaces.
\newblock {\em Journal of the European Mathematical Society}, 18(7):1537--1564,
  2016.


\bibitem{ChitayatMScThesis}
Michael Chitayat, \emph{Locally nilpotent derivations and their quasi-extensions},
  Master's thesis, University of Ottawa, 2016.



\bibitem{Dai:IntroLNDs2010}
Daniel Daigle.
\newblock Introduction to locally nilpotent derivations.
\newblock Informal lecture notes prepared in 2010, available at \verb!http://aix1.uottawa.ca/~ddaigle!


\bibitem{Dai:TameWild}
Daniel Daigle.
\newblock Tame and wild degree functions.
\newblock {\em Osaka Journal of Mathematics}, 49:53--80, 2012.

\bibitem{LNDsAbelianGroup}
Daniel Daigle, Gene Freudenburg, and Lucy Moser-Jauslin.
\newblock {Locally nilpotent derivations of rings graded by an abelian group}.
\newblock In {\em {Algebraic Varieties and Automorphism Groups}}, volume~75 of
  {\em Advanced Studies in Pure Mathematics}, pages 29--48. {Mathematical
  Society of Japan}, 2017.

\bibitem{DeBondt_MasonThm_2009}
Michiel de~Bondt.
\newblock Another generalization of {M}ason's {ABC-T}heorem.
\newblock Preprint, arXiv:0707.0434v2, 2009.

\bibitem{Eisenbud}
David Eisenbud.
\newblock {\em Commutative Algebra With a View Toward Algebraic Geometry}.
\newblock Springer-Verlag, New York, 1995.
\newblock Graduate Texts in Mathematics, No. 150.

\bibitem{VDE:book}
Arno~van~den Essen.
\newblock {\em Polynomial automorphisms}, volume 190 of {\em Progress in
  Mathematics}.
\newblock Birkh\"auser, 2000.


\bibitem{almostRigidRings}
David Finston and Stefan Maubach.
\newblock Constructing (almost) rigid rings and a {UFD} having infinitely
  generated {D}erksen and {M}akar-{L}imanov invariant.
\newblock {\em Canadian Mathematics Bulletin}, 53:77--86, 2010.

\bibitem{freudenburg2013}
Gene Freudenburg and Lucy Moser-Jauslin.
\newblock Locally nilpotent derivations of rings with roots adjoined.
\newblock {\em Michigan Mathematical Journal}, 62(2):227--258, 2013.


\bibitem{Freud:Book-new}
Gene Freudenburg.
\newblock {\em Algebraic theory of locally nilpotent derivations}, volume 136
  of {\em Encyclopaedia of Mathematical Sciences}.
\newblock Springer-Verlag, Berlin, second edition, 2017.
\newblock Invariant Theory and Algebraic Transformation Groups, VII.




\bibitem{Kali-Zaid_2000}
Shulim Kaliman and Mikhail Zaidenberg.
\newblock Miyanishi's characterization of the affine 3-space does not hold in
  higher dimensions.
\newblock {\em Annales de l'Institut Fourier (Grenoble)}, 50(6):1649--1669,
  2000.


\bibitem{SeadeBook2006}
Jos\'{e} Seade.
\newblock {\em On the Topology of Isolated Singularities in Analytic Spaces},
  volume 241 of {\em Progress in Mathematics}.
\newblock Birkh\"{a}user Verlag, Basel, 2006.

\end{thebibliography}

\end{document}